\newtheorem{theorem}{Theorem}[section]
\newtheorem{lemma}[theorem]{Lemma}
\newtheorem{corollary}[theorem]{Corollary}
\newtheorem{result}[theorem]{Result}
\theoremstyle{definition}
\newtheorem*{definition}{Definition}
\newtheorem*{corollary1}{Corollary \ref{subspreads}}
\newtheorem{remark}{Remark}
\newcommand{\FF}{\mathbb{F}}
\newcommand{\Fq}{\mathbb{F}_q}
\newcommand{\Fqn}{\mathbb{F}_{q^n}}
\newcommand{\Fqt}{\mathbb{F}_{q^2}}
\newcommand{\Fqk}{\mathbb{F}_{q^k}}
\newcommand{\B}{\mathcal B}
\newcommand{\D}{\mathcal D}
\newcommand{\C}{\mathcal C}
\newcommand{\R}{\mathcal R}
\def\A{\mathcal{A}}
\def\S{\mathcal{S}}
\def\P{\mathcal P}
\def\F{\mathbb{F}}
\def\Fq{{\mathbb{F}}_q}
\def\Aut{\mathrm{Aut}}
\def\PG{\mathrm{PG}}
\def\AG{\mathrm{AG}}
\def\PGL{\mathrm{PGL}}
\def\PGammaL{\mathrm{P\Gamma L}}
\def\N{\mathcal{N}}
\def\L{\mathcal{L}}
\newcommand{\npmatrix}[1]{\left( \begin{matrix} #1 \end{matrix} \right)}
\begin{document}
\title{Subgeometries in the Andr\'e/Bruck-Bose representation}
\author{Sara Rottey\thanks{Partially supported by Fonds Professor Frans Wuytack (UGent).}\and John Sheekey\thanks{Supported by the Research Foundation Flanders –- Belgium (FWO-Vlaanderen).} \and Geertrui Van de Voorde\footnotemark[2]}
\maketitle

\begin{abstract} We consider the Andr\'e/Bruck-Bose representation of the projective plane $\PG(2,q^n)$ in $\PG(2n,q)$. We investigate the representation of $\F_{q^k}$-sublines and $\F_{q^k}$-subplanes of $\PG(2,q^n)$, extending the results for $n=3$ of \cite{BarJack2} and correcting the general result of \cite{BarJack1}. We characterise the representation of $\F_{q^k}$-sublines tangent to or contained in the line at infinity, $\F_q$-sublines external to the line at infinity, $\F_q$-subplanes tangent to and $\F_{q^k}$-subplanes secant to the line at infinity.
\end{abstract}
{\bf Keywords:} Bruck-Bose representation, $\PG(2,q^n)$, sublines, subplanes, normal rational curve, normal rational scroll

\section{Preliminaries}
The {\em Andr\'e/Bruck-Bose representation}, or in short {\em ABB-representation}, is a well-known representation of the projective plane $\PG(2,q^n)$ in $\PG(2n,q)$ (for details, see Subsection \ref{ABB}). The ABB-representation of sublines, subplanes and unitals of $\PG(2,q^2)$ in $\PG(4,q)$ has been thoroughly studied (see \cite{Ebert}). In \cite{squinn} the authors extended these results and characterised $\F_{q^{2^i}}$-sublines tangent to and $\F_{q^{2^i}}$-subplanes secant to the line at infinity in the ABB-representation of $\PG(2,q^{2^n})$ in $\PG(2^{n+1},q)$.
Recently, the characterisation of $\F_q$-sublines and $\F_q$-subplanes tangent to the line at infinity in the ABB-representation of $\PG(2,q^3)$ in $\PG(6,q)$ was given in \cite{BarJack1} and \cite{BarJack2}.
In \cite{BarJack1}, the authors extended their proof for sublines with $n=3$ to general $n$. This generalisation is as we will see in Theorem \ref{externalsubline} not entirely correct (unless $n$ is a prime). The slightly modified version of their theorem will enable us to extend the results of \cite{BarJack1} and \cite{BarJack2} to sublines and subplanes for general $n$.

This paper is organised as follows.
 Field reduction, Desarguesian spreads, indicator sets and subspreads are introduced in Subsection \ref{fieldreduction}. We use explicit coordinates for the ABB-representation which are introduced in Subsection \ref{ABB}. This enables us to determine the indicator sets for the Desarguesian spread considered in this paper in an explicit way in Subsection \ref{indicator}. To allow us to use coordinates in the most convenient form in the calculations of Sections \ref{sublines} and \ref{subplanes}, we determine in Subsection \ref{InducedGroupAction} the induced action of the stabiliser of the line at infinity of $\PG(2,q^n)$ on the points of the ABB-representation in $\PG(2n,q)$.

In Section \ref{sublines}, we characterise $\F_{q^k}$-sublines meeting the line at infinity (Theorem \ref{tangentsubline}) and $\F_q$-sublines disjoint from the line at infinity (Theorem \ref{externalsubline}) and $\F_{q^k}$-sublines contained in the line at infinity (Theorem \ref{secantsubline}).
In Section \ref{subplanes}, we characterise $\F_{q^k}$-subplanes that are secant to the line at infinity (Theorem \ref{secantsubplane}) and $\F_q$-subplanes that are tangent to the line at infinity (Theorem \ref{tangentsubplane}). %Moreover, we give a description of the external $\F_q$-subplanes (Subsection \ref{sectionexternalsubplane}).

We begin by writing explicit representations for the various projective spaces that will be used in this article.

Consider a vector space $V \simeq \FF_{q^{n_{0}}} \times \cdots \times \FF_{q^{n_s}}$, for some positive integers $n_i$. A point $P$ of the projective space $\PG(V,\Fq)$ defined by the vector $v=(a_0,\ldots,a_s)$, where $a_i \in \FF_{q^{n_i}}$, will be written as $(v)_{\Fq}$ or $(a_0,\ldots,a_s)_{\Fq}$, emphasizing the fact that every $\F_q$-multiple of $v=(a_0,\ldots,a_s)$ gives rise to the point $P$, i.e.
\[(v)_{\Fq}=\{\lambda v\mid  \lambda \in \F_q^*\}\ \mathrm{ and }\
(a_0,\ldots,a_s)_{\Fq} = \{(\lambda a_0,\ldots,\lambda a_s) \mid \lambda\in \Fq^*\}.
\]

A {\em frame} of %a subspace $\pi$ of dimension $d$ in
$\PG(n,q)$ is a set of $n+2$ points such that any $n+1$ points span $\PG(n,q)$.

%Let $q_0$ be a divisor of $q$ and consider a set $S$ of $\frac{q_0^{d+1}-1}{q_0-1}$ points spanning an $d$-dimensional subspace $\pi$ of $\PG(n,q)$. If there exists a frame $\{P_0,P_1,\ldots,P_{n+1}\}$ of $\PG(n,q)$ such that $\{P_0,\ldots,P_{d+1}\}$ forms a frame of $\pi$ and such that the homogeoneous coordinates of the points of $S$ with respect to the frame $\{P_0,\ldots,P_{n+1}\}$ of in $\F_{q_0}$, then $S$ is a $d$-dimensional subgeometry of order $q_0$ of $\PG(n,q)$.

%MAYBE AN EASIER DEFINITION OF SUBGEOMETRY IS THE FOLLOWING:

Consider a subfield $\F_{q_0^{}}$ of $\F_q$; an {\em $n$-dimensional subgeometry} of $\PG(n,q)$ of order $q_{0}^{}$ is (the inherited incidence structure of) a set of $(q_0^{n+1}-1)/(q_0^{}-1)$ points whose homogeneous coordinates, with respect to a well-chosen frame of $\PG(n,q)$, are in $\F_{q_0^{}}$.

Using the previous definition, every $k$-dimensional subgeometry of a $k$-space contained in $\PG(n,q)$ is just called a $k$-dimensional subgeometry of $\PG(n,q)$.

%An {\it $\FF_{q_0}$-linear set} is the a set of points defined by an $\FF_{q_0}$ vector subspace $U$ of $V$, i.e. $L(U) :=  \{(u)_{\Fq}: u \in U \mid u \ne 0\}$. If $\dim_{\FF_{q_0}}(U)=r$, we say that $L(U)$ is an $\FF_{q_0}$-linear set of {\it rank $r$}.

\subsection{Field reduction, Desarguesian spreads, indicator sets and subspreads}\label{fieldreduction}
A {\em $(t-1)$-spread} $\S$ of $\PG(r-1,q)$ is a partition of the point set of $\PG(r-1,q)$ into subspaces of dimension $(t-1)$. Clearly, a $(t-1)$-spread of $\PG(r-1,q)$ can only exist if $t$ divides $r$. The construction of a {\em Desarguesian spread} that follows shows the well-known fact that this condition is also sufficient.

A Desarguesian spread of $\PG(rn-1,q)$ can be obtained by applying {\em field reduction} to the points of $\PG(r-1,q^n)$. The underlying vector space of the projective space $\PG(r-1,q^n)$ is $V(r,q^n)$; if we consider $V(r,q^n)$ as a vector space over $\F_q$, then it has dimension $rn$, so it defines a $\PG(rn-1,q)$. In this way, every point $P$ of $\PG(r-1,q^n)$ corresponds to a subspace of $\PG(rn-1,q)$ of dimension $(n-1)$ and it is not hard to see that this set of $(n-1)$-spaces forms a spread of $\PG(rn-1,q)$, which is called a Desarguesian spread. When we apply field reduction to an $\F_q$-subline of $\PG(r-1,q^n)$, we obtain a set of $q+1$ $(n-1)$-spaces forming a {\em regulus}, which is a set $\R$ of $(n-1)$-spaces contained in a $(2n-1)$-dimensional space, such that a line meeting $3$ elements of $\R$ meets all elements of $\R$. For more information about field reduction and Desarguesian spreads, we refer to \cite{FQ11}.

By Segre \cite{segre}, a Desarguesian spread can also be constructed as follows. Embed $\Lambda \simeq \PG(rn-1, q)$ as a subgeometry of $\Lambda^*\simeq \PG(rn-1, q^n)$. The subgroup of $\PGammaL(rn,q^n)$ fixing $\Lambda$ pointwise is isomorphic to $\Aut(\F_{q^n}/\Fq)$. Consider a generator $\sigma$ of this group. One can prove that that there exists an $(r - 1)$-space $\nu$ skew to the subgeometry $\Lambda$ and that a subspace of $\PG(rn-1, q^n)$ of dimension $s$ is fixed by $\sigma$  if and only if it intersects the subgeometry $\Lambda$ in a subspace of dimension $s$  (see \cite{IndicatorSet}). Let $P$ be a point of $\nu$ and let $L(P)$ denote the $(n-1)$-dimensional subspace generated by the {\em conjugates} of $P$, i.e., $L(P) = \langle P,P^\sigma,\ldots,P^{\sigma^{n-1}}\rangle$. Then $L(P)$ is fixed by $\sigma$ and hence it intersects $\PG(rn-1,q)$ in a $(n-1)$-dimensional subspace. Repeating this for every point of $\nu$, one obtains a set $\mathcal{D}$ of $(n-1)$-spaces of the subgeometry $\Gamma$ forming a spread. This spread $\D$ is a Desarguesian spread and $\{\nu,\nu^{\sigma},\ldots,\nu^{\sigma^{n-1}}\}$ is called the {\it indicator set} of $\D$. An indicator set is sometimes also called a set of {\em director spaces} \cite{segre} or a set of {\em transversal spaces} \cite{BarJack1}.
It is known from \cite{IndicatorSet} %Theorem 6.1
that for any Desarguesian $(n-1)$-spread of $\PG(rn-1,q)$ there exist a unique indicator set in $\PG(rn-1,q^n)$.

%A different construction of Desarguesian spreads is due to Segre.
%Consider $\Lambda\simeq\PG(rn-1,q)$ and embed $\Lambda$ as a subgeometry in $\Lambda^*\simeq\PG(rn-1,q^n)$.  Take an $(r-1)$-space $d$ of $\Lambda^*$ disjoint from $\Lambda$ and consider the set $\{d,d^{\sigma},\ldots,d^{\sigma^{n-1}}\}$ of $(r-1)$-spaces all disjoint from $\Lambda$. The intersections of the $(n-1)$-spaces $\{\langle P,P^{\sigma},\ldots,P^{\sigma^{n-1}}\rangle \mid P \in d\}$ with $\Lambda$ induce a spread $\S$ of $\Lambda$, which is always Desarguesian. The set $\{d,d^{\sigma},\ldots,d^{\sigma^{n-1}}\}$ is called the {\it indicator set} of $\S$. We call the points $P$ and $P^{\sigma^i}$ {\it conjugate}. An indicator set is sometimes also called a set of {\em director spaces} \cite{segre} or a set of {\em transversal spaces} \cite{BarJack1}.
%It is known from \cite{IndicatorSet} %Theorem 6.1 that for any Desarguesian $(n-1)$-spread $\S$ of $\Lambda$ there exist a unique indicator set in $\Lambda^*$.
%that there exists a unique set of $r$-spaces $\{g,g^{\sigma},\ldots,g^{\sigma^{n-1}}\}$ of $\Lambda^*$, all disjoint from $\Lambda$, called the {\it indicator set} of $\S$, such that the intersections of the $(n-1)$-spaces $\{\langle P,P^{\sigma},\ldots,P^{\sigma^{n-1}}\rangle \mid P \in g\}$ with $\Lambda$ induce the Desarguesian spread $\S$ of $\Lambda$.

A $(k-1)$-{\em subspread} of an $(n-1)$-spread $\S$ of $\PG(rn-1,q)$, $k|n$, is a $(k-1)$-spread of $\PG(rn-1,q)$ that induces a $(k-1)$-spread in each element of $\S$. We can construct a Desarguesian $(k-1)$-subspread of the Desarguesian spread $\D$ as follows.

For every divisor $k|n$, we can consider the $(rn-1)$-dimensional subgeometry $\Lambda_{k} := \mathrm{Fix}(\sigma^k) \simeq \PG(rn-1,q^k)$ of $\Lambda^*$. Obviously, $\Lambda$ is contained in $\Lambda_k$.
Consider the $(\frac{rn}{k}-1)$-dimensional subgeometry $\Pi=\langle \nu, \nu^{\sigma^k},\ldots,\nu^{\sigma^{n-k}}\rangle \cap \Lambda_k$, this space is disjoint from $\Lambda$. One can see that the set $\{ \Pi, \Pi^\sigma, \ldots, \Pi^{\sigma^{k-1}}\}$ is the indicator set of a $(k-1)$-spread $\D_k$ in $\Lambda$. Clearly $\D_k$ is Desarguesian.

Consider a spread element $E \in \D$ and its $\Fqn$-extension $E^*$ in $\Lambda^*$. There exists a unique point $P \in \nu$ such that $E^*=\langle P,P^{\sigma},\ldots,P^{\sigma^{n-1}}\rangle$.
Consider the $(\frac{n}{k}-1)$-dimensional subgeometry $\pi=\langle P, P^{\sigma^k},\ldots,P^{\sigma^{n-k}}\rangle \cap \Lambda_k$ in $E^*$; this is a subspace of $\Pi$. The set $\{ \pi, \pi^\sigma, \ldots, \pi^{\sigma^{k-1}}\}$ is the indicator set of a $(k-1)$-spread of $E$ and each of these $(k-1)$-spaces is a spread element of $\D_k$. Hence the spread $\D_k$ induces a $(k-1)$-spread in each $(n-1)$-space of $\D$. It follows that $\D_k$ is a Desarguesian subspread of $\D$.

In \cite[Theorem 2.4]{squinn} the authors proved that there is a unique Desarguesian 1-subspread of a Desarguesian 3-spread in $\PG(7,q)$.
This is true in general, we will prove the following corollary in Section \ref{Sectionsecantsublines}.
\begin{corollary1}
A Desarguesian $(n-1)$-spread of $\PG(rn-1,q)$ has a unique Desarguesian $(k-1)$-subspread for each $k|n$.
\end{corollary1}

\begin{remark}
As described in the beginning of this section, a Desarguesian $(n-1)$-spread of $\PG(rn-1,q)$ can be obtained by applying field reduction to $\PG(r-1,q^n)$. Now consider a field reduction map $\mathcal{F}$ from subspaces of $\PG(r-1,q^n)$ to subspaces of $\PG(rn-1,q)$:
\[ \mathcal{F}: \PG(r-1,q^n)\rightarrow \PG(rn-1,q).\]
For a divisor $k|n$, this map can be written as the composition of two other field reduction maps $\mathcal{F}=\mathcal{F}_2 \circ \mathcal{F}_1$:
\[ \PG(r-1,q^n) \xrightarrow[\mathcal{F}]{} \PG(rn-1,q)\] \[= \PG(r-1,q^n) \xrightarrow[\mathcal{F}_1]{} \PG(\frac{rn}{k}-1,q^k) \xrightarrow[\mathcal{F}_2]{}\PG(rn-1,q). \]

If $\D$ is the Desarguesian $(n-1)$-spread in $\PG(rn-1,q)$ obtained by applying the field reduction map $\mathcal{F}$ to the points of $\PG(r-1,q^n)$, then its subspread $\D_k$ is the Desarguesian $(k-1)$-spread in $\PG(rn-1,q)$ obtained by applying the field reduction map $\mathcal{F}_2$ to the points of $\PG(\frac{rn}{k}-1,q^k)$.
\end{remark}

\subsection{The Andr\'e/Bruck-Bose representation of $\PG(2,q^n)$ in $\PG(2n,q)$}\label{ABB}

Andr\'e \cite{Andre} and Bruck and Bose \cite{Bruck-Bose} independently found a representation of
translation planes of order $q^n$ with kernel containing $\F_q$ in the projective
space $\PG(2n, q)$. We refer to this as the {\it Andr\'e/Bruck-Bose representation} or the {\it ABB-representation}. The construction of Andr\'e was based on group theory, Bruck and Bose
gave an equivalent geometric construction, which is the form we use in this paper and goes as follows.

Let $\S$ be a $(n- 1)$-spread in $\PG(2n - 1, q)$. Embed $\PG(2n - 1, q)$ as a hyperplane $H_\infty$ in $\PG(2n, q)$. Consider the following incidence structure $A(\S) = (\P,\L)$, where incidence is natural:
\begin{itemize}
\item[$\P:$] the affine points, i.e. the points of $\PG(2n, q) \backslash H_\infty$,
\item[$\L:$] the $n$-spaces of $\PG(2n, q)$ intersecting $H_\infty$ exactly in an element of $\S$.
\end{itemize}
In \cite{Bruck-Bose} the authors showed that $A(\S)$ is an affine translation plane of order $q^n$, and conversely, every such translation plane can be constructed in this way. If the spread $\S$ is Desarguesian, the plane $A(\S)$ is a Desarguesian affine plane $\AG(2, q^n)$.
The {\em projective completion} of the affine plane $A(\S)$ can be found by adding $H_\infty$ as the line $l_\infty$ at infinity where the elements of $\S$ correspond to the points of $l_\infty$, and we denote this by $\overline{A(\S)}$. Clearly, the projective completion $\overline{A(\S)}$ is a Desarguesian projective plane $\PG(2, q^n)$ if and only if the spread $\S$ is Desarguesian.

For any affine subspace $\pi$ of an affine geometry $\AG(N,q)=\PG(N,q)\setminus H_\infty$, we will denote the projective completion of $\pi$ by $\overline{\pi}$.

We now consider a specific way of giving coordinates to $\PG(2,q^n)$ and $\PG(2n,q)$, and using this we define an explicit map $\phi: \PG(2,q^n) \rightarrow \PG(2n,q)$ mapping each point to its corresponding element in the ABB-representation.

Recall that a point $P$ of $\PG(2,q^n)$ defined by a vector $(a,b,c) \in (\Fqn)^3$ is denoted by $(a,b,c)_{\Fqn}$. We fix a line at infinity of $\PG(2,q^n)$, say $l_\infty$, such that \[l_\infty= \{(a,b,0)_{\Fqn} \mid a,b \in \Fqn, (a,b) \ne (0,0)\}.\]
The affine points are the points of $\PG(2,q^n)\backslash l_{\infty}$ and clearly every affine point can be written as $(a,b,1)_{\Fqn}$, $a,b \in \Fqn$.

On the other hand, each point of $\PG(2n,q)$ can be denoted by $(a,b,c)_{\Fq}$, $a,b \in \Fqn, c \in \Fq$.
%The ambient vector space is $\Fqn \times \Fqn \times \Fq \simeq V(2n+1,q)$.
We consider the hyperplane $H_\infty$ with the following coordinates
\[H_{\infty} = \{(a,b,0)_{\Fq} \mid a,b \in \Fqn, (a,b)\neq(0,0) \}.\]
%Then $H_{\infty} \simeq \PG(2n-1,q)$, and $\Sigma = \langle H_{\infty},(0,0,1)\rangle$.
Furthermore, $H_{\infty}$ contains the Desarguesian $(n-1)$-spread $\D$ defined by
\[
\D = \left\{ \{(ax,bx,0)_{\Fq} \mid x \in \F_{q^n}^* \}\mid a,b \in \Fqn, (a,b)\neq(0,0)  \right\}.
\]

It is clear that the following map $\phi$, for $a,b \in \Fqn, (a,b)\neq(0,0)$, corresponds to the ABB-representation.
\begin{align*}
\phi:  \PG(2,q^n) &\rightarrow \PG(2n,q)\\
 (a,b,0)_{\Fqn} &\mapsto \{(ax,bx,0)_{\Fq}\mid x \in \Fqn^*\}\\
  (a,b,1)_{\Fqn} &\mapsto (a,b,1)_{\Fq}.
\end{align*}
The map $\phi$ will also be called the {\em ABB-map}.

\subsection{Choosing the right coordinates}\label{indicator}

Now consider the Desarguesian $(n-1)$-spread $\D$ in $H_\infty\simeq\PG(2n-1,q)$ constructed in the previous section and the embedding of $H_\infty$ as a hyperplane of $\Sigma\simeq\PG(2n,q)$.

We wish to consider an embedding of $\Sigma$ as a subgeometry in $\Sigma^*\simeq\PG(2n,q^n)$, such that the induced embedding of $H_\infty \subset \Sigma$ in $H^*_\infty \subset \Sigma^*$ provides us with a convenient description of the indicator set of $\D$ consisting of lines.

We can denote points of $\Sigma^* \simeq \PG(2n,q^n)$ by
\[
(a_0,a_1,\ldots,a_{n-1};b_0,b_1,\ldots,b_{n-1};c)_{\Fqn}, \mbox{ for } a_i,b_i,c \in \Fqn.
\]
%For shorthand, we will sometimes write this as $\left((a_i);(b_i);c\right)_{\Fqn}$, where the index $i$ is always assumed to range from $0$ to $n-1$.
%OR...
%\[
%\npmatrix{a_0&a_1&\ldots&a_{n-1}\\b_0&b_1&\ldots&b_{n-1}\\ &&c&}_{\Fqn}, \mbox{ for } a_i,b_i,c \in \Fqn.
%\]
%OR...
%\[
%\npmatrix{a_0&b_0&c\\a_1&b_1&c\\\vdots&\vdots&\vdots\\a_{n-1} &b_{n-1}&c}_{\Fqn}, \mbox{ for } a_i,b_i,c \in \Fqn.
%\]

Define the hyperplane $H_{\infty}^* \simeq \PG(2n-1,q^n)$ to consist of the points of $\Sigma^*$ for which $c=0$.

We define a collineation $\sigma$ of $\Sigma^*$ by
\begin{align*}
\sigma: \PG(2n,q^n) \rightarrow \PG(2n,q^n) &:(a_0,a_1,\ldots,a_{n-1};b_0,b_1,\ldots,b_{n-1};c)_{\Fqn} \\
&\mapsto (a_{n-1}^q,a_0^q,\ldots,a_{n-2}^q;b_{n-1}^q,b_0^q,\ldots,b_{n-2}^q;c^q)_{\Fqn}.
\end{align*}
%OR...
%\[
%\npmatrix{a_{n-1}^q&b_{n-1}^q&c^q\\a_0^q&b_0^q&c^q\\\vdots&\vdots&\vdots\\a_{n-2}^q &b_{n-2}^q&c^q}_{\Fqn},
%\]
%It is clear that the fixed points of $\sigma$ are those of the form
%\[
%(a,a^q,\ldots,a^{q^{n-1}};b,b^q,\ldots,b^{q^{n-1}};c) _{\Fqn}
%\]
%OR...
%\[
%\npmatrix{a&b&c\\a^q&b^q&c\\\vdots&\vdots&\vdots\\a^{q^{n-1}} &b^{q^{n-1}}&c}_{\Fqn},
%\]
for $a_i,b_i \in \Fqn$, $c \in \Fq$. The corresponding map on the vector defining a point of $\PG(2n,q^n)$ will also be denoted by $\sigma$. The points of $\Sigma^*$ fixed by $\sigma$ form a subgeometry isomorphic to $\PG(2n,q)$; this subgeometry is the following: \[\{(a,a^q,\ldots,a^{q^{n-1}};b,b^q,\ldots,b^{q^{n-1}};c) _{\Fqn}\mid a,b \in \Fqn, c \in \Fq, (a,b,c)\neq (0,0,0)\}.\]
Hence we can see the embedding of $\Sigma \simeq \PG(2n,q)$ in $\Sigma^* \simeq \PG(2n,q^n)$ via the following map $\iota$, for $a,b \in \Fqn$, $c \in \Fq$.
\begin{align*}
\label{eqn:embedPG2n}
\iota: \PG(2n,q)&\rightarrow \PG(2n,q^n)\\
 (a,b,c)_{\Fq} &\mapsto (a,a^q,\ldots,a^{q^{n-1}};b,b^q,\ldots,b^{q^{n-1}};c) _{\Fqn}.
\end{align*}

%OR...
%\begin{equation}
%\label{eqn:embedPG2n}
% (a,b,c)_{\Fq} \mapsto \npmatrix{a&b&c\\a^q&b^q&c\\\vdots&\vdots&\vdots\\a^{q^{n-1}} &b^{q^{n-1}}&c}_{\Fqn},
%\end{equation}

Clearly $$\iota(H_{\infty})=\{(a,a^q,\ldots,a^{q^{n-1}};b,b^q,\ldots,b^{q^{n-1}};0) _{\Fqn} \mid a,b \in \Fqn, (a,b)\neq(0,0)\}$$ forms a $(2n-1)$-dimensional subgeometry of order $q$ of $H^*_{\infty}$.

Let us now consider the line $\nu$ in $H^*_{\infty}$, disjoint from $\iota(H_\infty)$, defined as
\[
\nu =  \left\langle(1,0,\ldots,0;0,0,\ldots,0;0),(0,0,\ldots,0;1,0,\ldots,0;0)\right\rangle_{\Fqn}.
\]
Then the set $\{\nu,\nu^{\sigma},\ldots,\nu^{\sigma^{n-1}}\}$ is an indicator set defining a Desarguesian spread of $\iota(H_{\infty})$ consisting of the $(n-1)$-spaces
\[
\left\{ (ax,(ax)^q,\ldots,(ax)^{q^{n-1}};bx,(bx)^q,\ldots,(bx)^{q^{n-1}};0)  _{\Fqn} \mid x \in \F_{q^n}^*\right\},
\]
for $a,b \in \Fqn$, $(a,b)\neq(0,0)$. It is easy to see that this is precisely $\iota(\D)$. By abuse of notation, from now on, we will denote $\iota(H_{\infty})$ and $\iota(\D)$ again by $H_{\infty}$ and $\D$.

%Note that we can define a Desarguesian spread $\D^*$ of $H^*_{\infty}$ consisting of spaces of the form
%\[
%\{ (a_0 x,a_1 x^q,\ldots,a_{n-1}x^{q^{n-1}};b_0 x,b_1 x^q,\ldots,b_{n-1}x^{q^{n-1}};0) _{\Fqn} \mid x \in \Fqn\}.
%\]
%It is clear that the spread on $H_{\infty}$ induced by $\D^*$ is precisely $\D$.

%For every divisor $k|n$, we can consider the $2n$-dimensional subgeometry $\Sigma_{k} := \mathrm{Fix}(\sigma^k) \simeq \PG(2n,q^k)$ of $\Sigma^*$. Obviously, $\Sigma$ is contained in $\Sigma_k$. Consider the $(\frac{2n}{k}-1)$-dimensional subgeometry $\Pi=\langle g, g^{\sigma^k},\ldots,g^{\sigma^{n-k}}\rangle \cap \Sigma_k$, this space is disjoint from $H_\infty$. One can see that the set $\{ \Pi, \Pi^\sigma, \ldots, \Pi^{\sigma^{k-1}}\}$ is the indicator set of a $(k-1)$-spread $\D_k$ in $H_\infty$. We refer to such a spread as the {\em induced subspread} $\D_k$ of $\D$.
%
%Consider a spread element $E \in \D$ and its $\Fqn$-extension $E^*$ in $H^*_\infty$. There exists a unique point $P \in g$ such that $E^*=\{\langle P,P^{\sigma},\ldots,P^{\sigma^{n-1}}\rangle \mid P \in g\}$. Consider the $(\frac{n}{k}-1)$-dimensional subgeometry $\pi=\langle P, P^{\sigma^k},\ldots,P^{\sigma^{n-k}}\rangle \cap \Sigma_k$ in $E^*$; this is a subspace of $\Pi$. The set $\{ \pi, \pi^\sigma, \ldots, \pi^{\sigma^{k-1}}\}$ is the indicator set of a $(k-1)$-spread of $E$ and each of these $(k-1)$-spaces is a spread element of $\D_k$. Hence the spread $\D_k$ induces a $(k-1)$-spread in each $(n-1)$-space of $\D$.

One can check that the $(k-1)$-subspread $\D_k$ of $\D$ in its original setting of $H_\infty$ (see Subsection \ref{fieldreduction}) corresponds to the set
\[
\D_k = \left\{ \{(ax,bx,0)_{\Fq} \mid x \in \F_{q^k}^* \}\mid a,b \in \Fqn, (a,b)\neq(0,0)  \right\}.
\]
This implies that $\iota(\D_k)$, which we also denote by $\D_k$, corresponds to the set of $(k-1)$-spaces of the form
\[
\left\{ (ax,(ax)^q,\ldots,(ax)^{q^{n-1}};bx,(bx)^q,\ldots,(bx)^{q^{n-1}};0)  _{\Fqn} \mid x \in \Fqk\right\},
\]
for $a,b \in \F_{q^n}$, $(a,b)\neq(0,0)$.

%\begin{remark}
%Note that this formulation is indeed equivalent to that in CITE: Consider $X \in \GL(n,q)$ of order $q^n-1$. Consider an eigenvector $v$ of $X$ with eigenvalue $\lambda$. One can check that $\lambda$ is a primitive element of $\Fqn$ and $v$ is a vector of $(\Fqn)^n$ such that $\{v, v^{\sigma}, \ldots, v^{\sigma^{n-1}}\}$ is a basis for $(\Fqn)^n$, where $v^{\sigma}$ denotes raising each entry to the power $q$. Then, identifying $(\sum_i a_i v^{q^i},\sum_j b_j v^{q^j})$ with $(a_0,\ldots,a_{n-1};b_0,\ldots,b_{n-1};0)$, we see that $g$ corresponds to the line $\langle (v^{\sigma},0),(0,v^{\sigma}) \rangle$ for $i =0,\ldots,n-1$, and the two formulations are equivalent.
%\end{remark}

\subsection{The induced action of the stabiliser of $l_{\infty}$ on $\PG(2n,q^n)$}\label{InducedGroupAction}

We use the notations introduced in the previous sections. Recall that points $\PG(2,q^n)$ have coordinates of the form $(a,b,c)_{\Fqn}$, where $a,b,c \in \Fqn$ and that the line $l_\infty$ meets equation $c=0$. In this section we consider how an element of the stabiliser of $l_{\infty}$ in $\PGL(3,q^n)$, say $G$, induces an action on $\Sigma \simeq \PG(2n,q)$ and we describe its extension to an element of $\PGammaL(2n+1,q^n)$ acting on $\Sigma^* \simeq \PG(2n,q^n)$. This will be of use later in this paper since it will allow us to study the representation of a representative of orbits of sublines or subplanes under $G$.

Every element $\chi_{{}_{0}}$ of the stabiliser $G$ of $l_{\infty}$ corresponds to a matrix of the form
\[
X=\npmatrix{ x_{11}&x_{12}&0\\x_{21}&x_{22}&0\\x_{31}&x_{32}&1},
\]
where $x_{ij} \in \Fqn$ and $x_{11}x_{22}-x_{12}x_{21} \ne 0$, where we let the matrix act on row vectors from the right, hence for $a,b,c \in \Fqn$:
\begin{align*}\chi_{{}_0}: \PG(2,q^n) &\rightarrow \PG(2,q^n) \\
 (a,b,c)_{\Fqn} &\mapsto \left( (a,b,c)X \right)_{\Fqn}.\end{align*}
The map $\chi_{{}_0}$ induces a natural action $\chi$ on the points $(a,b,c)_{\Fq}$ of $\Sigma$, $a,b \in \Fqn$, $c\in\Fq$, as follows:
\begin{align*}\chi: \PG(2n,q) &\rightarrow \PG(2n,q) \\
 (a,b,c)_{\Fq}&\mapsto ((a,b,c)X)_{\Fq}.\end{align*}

Recall that we denoted points of $\Sigma^* \simeq \PG(2n,q^n)$ by
\[
(a_0,a_1,\ldots,a_{n-1};b_0,b_1,\ldots,b_{n-1};c)_{\Fqn}, \mbox{ for } a_i,b_i,c \in \Fqn.
\]
For shorthand, we will write these now as $\left((a_i);(b_i);c\right)_{\Fqn}$, where the index $i$ is assumed to range from $0$ to $n-1$.

We define the \emph{extension} of $\chi$, denoted by $\chi^*$, to be the collineation of $\Sigma^*$ which acts on a generic point as follows:
\begin{align*}
\chi^*:\PG(2n,q^n)&\rightarrow \PG(2n,q^n)\\
((a_i);(b_i);c)_{\F_{q^n}} &\mapsto (({x_{11}}^{q^i} a_i + {x_{21}}^{q^i} b_i + {x_{31}}^{q^i} c);( {x_{12}}^{q^i} a_i + {x_{22}}^{q^i} b_i + {x_{32}}^{q^i} c); c)_{\F_{q^n}}.
\end{align*}
The following lemma is easy to prove.
 \begin{lemma} \label{IGA}Using the notations from above, we have the following.
  \vspace*{-0.3cm}
 \begin{enumerate}
\item[{\rm(a)}]
The map $\chi$ has the following properties:
\begin{itemize} \setlength{\itemsep}{0pt}
%\item
%$\chi$ stabilises the Desarguesian spread $\D$ of $H_{\infty}$;
%\item
%$\chi$ stabilises the set of $n$-spaces of $\PG(2n,q)$ meeting $H_{\infty}$ in an element of $\D$;
\item
$\chi$ stabilises the Desarguesian spread $\D_k$ of $H_{\infty}$ for each divisor $k$ of $n$,
\item
$\chi$ stabilises the set of $k$-spaces of $\PG(2n,q)$ meeting $H_{\infty}$ in an element of $\D_k$.
\end{itemize}
Moreover, let $\phi: \PG(2,q^n)\rightarrow \PG(2n,q)$ be the ABB-map as defined in Section \ref{ABB}. Then for every point $P$ of $\PG(2,q^n)$, we have that $$\phi \chi_{{}_0}(P)=\chi\phi(P).$$

\item[{\rm(b)}] The extension $\chi^*$ of $\chi$ has the following properties:
\begin{itemize} \setlength{\itemsep}{0pt}
\item $\chi^*(P^{\sigma}) = (\chi^*(P))^{\sigma}$ for all $P \in \Sigma^*$, i.e. $\chi^*$ maps conjugate points to conjugate points,
%\item
%$\chi^*$ stabilises the indicator set $\{d,d^{\sigma},\ldots,d^{\sigma^{n-1}}\}$;
\item
$\chi^*$ stabilises the indicator set of $\D_k$ for each divisor $k$ of $n$.
\end{itemize}
Moreover, let $\iota: \PG(2n,q)\rightarrow \PG(2n,q^n)$ be the embedding defined in Section \ref{indicator}. Then for every point $P$ of $\PG(2n,q)$, we have that $$\iota \chi(P)=\chi^*\iota(P).$$
\end{enumerate}

\end{lemma}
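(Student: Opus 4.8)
The plan is to verify every assertion by direct substitution into the explicit coordinates fixed in Subsections~\ref{ABB} and~\ref{indicator}, together with one small structural step for the claims about indicator sets. For part~(a) I would first check that $\chi$ is a well-defined collineation of $\Sigma\simeq\PG(2n,q)$: the bottom-right entry of $X$ is $1$ and the rest of its last column is $0$, so $(a,b,c)_{\Fq}\mapsto((a,b,c)X)_{\Fq}$ sends a vector of $\Fqn\times\Fqn\times\Fq$ to another such vector, is $\Fq$-linear, and is invertible because $X^{-1}$ has the same block shape (its last column is again $(0,0,1)$ and its top-left $2\times2$ block is the inverse of that of $X$, which is invertible since $x_{11}x_{22}-x_{12}x_{21}\ne0$). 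For a spread element $E=\{(ax,bx,0)_{\Fq}\mid x\in\F_{q^k}^*\}$ of $\D_k$ one computes $\chi(E)=\{((ax_{11}+bx_{21})x,\,(ax_{12}+bx_{22})x,\,0)_{\Fq}\mid x\in\F_{q^k}^*\}=\{(a'x,b'x,0)_{\Fq}\mid x\in\F_{q^k}^*\}$, where $(a',b')=(a,b)\,\big(\begin{smallmatrix}x_{11}&x_{12}\\x_{21}&x_{22}\end{smallmatrix}\big)$ is nonzero because the $2\times2$ block is invertible; so $\chi(E)\in\D_k$ and $\chi$ permutes $\D_k$. Since $\chi$ is a collineation fixing $H_\infty$ (the condition $c=0$ is preserved), it then also permutes the $k$-spaces meeting $H_\infty$ in an element of $\D_k$. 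Finally $\phi\chi_{{}_0}=\chi\phi$ is checked on each type of point: for $(a,b,1)_{\Fqn}$ both sides equal $(ax_{11}+bx_{21}+x_{31},\,ax_{12}+bx_{22}+x_{32},\,1)_{\Fq}$, and for $(a,b,0)_{\Fqn}$ both sides equal the spread element $\{(a'x,b'x,0)_{\Fq}\mid x\in\F_{q^n}^*\}$ with $a'=ax_{11}+bx_{21}$, $b'=ax_{12}+bx_{22}$.

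For part~(b) I would first note that $\chi^*$ is $\Fqn$-linear, and invertible: ordering the coordinates with $c$ first and pairing each $a_i$ with $b_i$, its matrix is block upper triangular with leading entry $1$ and diagonal $2\times2$ blocks of determinant $(x_{11}x_{22}-x_{12}x_{21})^{q^i}\ne0$; hence $\chi^*$ is a collineation of $\Sigma^*$. The identity $\iota\chi=\chi^*\iota$ is exactly the computation behind the definition of $\chi^*$: with $a'=ax_{11}+bx_{21}+cx_{31}$ and using $c^{q^i}=c$ for $c\in\Fq$, one has $(a')^{q^i}=x_{11}^{q^i}a^{q^i}+x_{21}^{q^i}b^{q^i}+x_{31}^{q^i}c$, which is precisely the $i$-th entry of the first block of $\chi^*\iota((a,b,c)_{\Fq})$, and the second block and the last coordinate match identically. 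For $\chi^*(P^\sigma)=(\chi^*(P))^\sigma$ one uses that $\sigma$ cyclically shifts the coordinate slots and raises every entry to the $q$-th power, while the coefficient of $\chi^*$ attached to slot $i$ is the $q^i$-th power of the relevant $x_{jl}$; raising the slot-$(i-1)$ output of $\chi^*(P)$ to the $q$-th power therefore produces the slot-$i$ output of $\chi^*(P^\sigma)$, so $\chi^*$ commutes with $\sigma$ (once more $c\in\Fq$ makes the last coordinate behave).

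For the remaining claim, that $\chi^*$ stabilises the indicator set of $\D_k$ for each $k\mid n$, I would reduce to the line $\nu$. Writing $e,f$ for the two generators of $\nu$ from Subsection~\ref{indicator}, a one-line computation gives $\chi^*(e)=(x_{11},0,\ldots,0;x_{12},0,\ldots,0;0)=x_{11}e+x_{12}f$ and $\chi^*(f)=x_{21}e+x_{22}f$, so $\chi^*(\nu)=\nu$. Since $\chi^*$ commutes with $\sigma$, it fixes every conjugate $\nu^{\sigma^j}$; since it also commutes with $\sigma^k$, it stabilises $\Lambda_k=\mathrm{Fix}(\sigma^k)$ and hence fixes $\Pi=\langle\nu,\nu^{\sigma^k},\ldots,\nu^{\sigma^{n-k}}\rangle\cap\Lambda_k$ and each of its conjugates $\Pi^{\sigma^j}$; these are precisely the members of the indicator set of $\D_k$ described in Subsection~\ref{fieldreduction}, so $\chi^*$ stabilises that set.

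I expect no genuine obstacle — the statement is flagged as easy — only the bookkeeping of the Frobenius twists $x_{jl}\mapsto x_{jl}^{q^i}$ attached to slot $i$ and of the fact that $c\in\Fq$; the one place where a little thought is needed is the reduction for indicator sets in the previous paragraph. As an alternative to that reduction one may instead invoke the uniqueness of the indicator set of a Desarguesian spread: $\chi$ stabilises $\D_k$, and $\chi^*$, commuting with $\sigma$ and stabilising $\iota(H_\infty)$, carries indicator sets to indicator sets, so it must carry the indicator set of $\D_k$ to that of $\chi(\D_k)=\D_k$.
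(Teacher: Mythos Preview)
Your proposal is correct and follows essentially the same approach as the paper's proof, namely direct verification in coordinates; the only difference is one of detail rather than method, in that you spell out the indicator-set stabilisation via $\chi^*(\nu)=\nu$ and commutation with $\sigma$, whereas the paper compresses this into the single remark that the $\sigma$-equivariance ``also implies that $\chi^*$ stabilises the indicator sets.''
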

\begin{proof} The proof follows from straightforward but tedious calculations.

(a) Notice that the image of an element $\{(ax,bx,0)_{\Fq}\}$ of $\D_k$ under $\chi$ is given by $\{((ax_{11}+bx_{21})x,(ax_{12}+bx_{22})x,0)_{\F_{q}}|x\in \F_{q^k}\}\subset \D_k$ from which the two properties of $\chi$ follow.

For the second statement of (a), we consider a point $P\in \PG(2,q^n)$ with coordinates $(a,b,c)_{\F_{q^n}}$. We have that $\chi_{{}_0}(P)=((a,b,c)X)_{\Fqn}= (ax_{11}+bx_{21}+cx_{31},ax_{12}+bx_{22}+cx_{32},c)_{\F_{q^n}}$, so
$\phi \chi_{{}_0}(P)=(ax_{11}+bx_{21}+cx_{31}, ax_{12}+bx_{22}+cx_{32},c)_{\F_{q}}$ and we get $\chi \phi(P)=\chi (a,b,c)_{\F_q}=(ax_{11}+bx_{21}+cx_{31},ax_{12}+bx_{22}+cx_{32},c)_{\F_{q}}.$

(b) Consider a point $P$ with coordinates $((a_i);(b_i);c)_{\F_{q^n}}$ in $\Sigma^*$. If we put $a_{-1}=a_{n-1}$ and $b_{-1}=b_{n-1}$, then for $i\in \{0,\ldots n-1\}$, the $(i+1)$-th coordinate of $\chi^*(P)$ is $x_{11}^{q^{i}}a_{i}+x_{21}^{q^{i}}b_{i}+x_{31}^{q^{i}}c$, which implies that the $(i+1)$-th coordinate of $(\chi^*P)^\sigma$ is $x_{11}^{q^{i+1}}a_{i-1}^q+x_{21}^{q^{i+1}}b_{i-1}^q+x_{31}^{q^{i+1}}c$. This is clearly equal to the $(i+1)$-th coordinate of $\chi^*(P^\sigma)$. The same argument holds for the other coordinate positions.
This also implies that $\chi^*$ stabilises the indicator sets.

For the last statement, consider a point $P$ in $\PG(2n,q)$ with coordinates $(a,b,c)_{\F_{q^n}}$, then $\iota(P)=((a^{q^i});(b^{q^i});c)_{\F_{q^n}}$ and
\begin{align*}
\chi^*\iota(P)&=\left((x_{11}^{q^i}a^{q^i}+x_{21}^{q^i}b^{q^i}+x_{31}^{q^i}c);(x_{12}^{q^i}a^{q^i}+x_{22}^{q^i}b^{q^i}+x_{32}^{q^i}c);c\right)_{\Fqn} \\
&=\left(((x_{11}a+x_{21}b+x_{31}c)^{q^i});((x_{12}a+x_{22}b+x_{32}c)^{q^i});c\right)_{\Fqn}\\
&=\iota\chi(P).\qedhere\end{align*}\end{proof}

\section{The ABB-representation of sublines}\label{sublines}

In this section we will determine the ABB-representation of $\F_{q^k}$-sublines of $\PG(2,q^n)$.  We need to make a distinction between sublines that are tangent to, external to or contained in $l_{\infty}$.

We start with the first two cases, which will be handled by use of coordinates. We end with the case of $\F_{q^k}$-sublines contained in $l_{\infty}$, for which no coordinates are needed.

\subsection{The equivalence of sublines under the stabiliser of $l_\infty$}

In the case of tangent and external sublines, we will consider sublines with specific coordinates. As before, we consider the plane $\PG(2,q^n) = \{ (a,b,c)_{\Fqn} \mid a,b,c \in \Fqn\}$ and the line $l_\infty$ with equation $c=0$. Let $l$ be the line meeting the equation $a=0$. We will show in Lemma \ref{equivalentsublines} that any subline tangent or external to $l_\infty$ is equivalent to a particular subline contained in $l$. Note that an $\F_{q^k}$-subline of $\PG(2,q^n)$, $k|n$, is uniquely determined by three collinear projective points, or, equivalently, by two distinct vectors of $\F_{q^n}^3$.

Given $\omega \in \Fqn$ and a divisor $k|n$, denote by $l_{\omega,k}$ the $\F_{q^k}$-subline determined by the vectors $(0,1,\omega),(0,0,1)$, or alternatively by the points $(0,1,\omega)_{\Fqn}$, $(0,0,1)_{\Fqn}$, $(0,1,\omega+1)_{\Fqn}$, i.e.
%\[l_{\omega,\lambda} = \{ (0,\lambda,\omega+ t)_{\Fqn}: t \in \Fq \} \cup \{(0,0,1)_{\Fqn}\}.\] Define also
\[
l_{\omega,k} = \left\{(0,1,\omega+ t)_{\Fqn} \mid t \in \F_{q^k} \}\cup \{(0,0,1)_{\Fqn}\right\}.
\]

Consider a set of sublines; we refer to a subline being the {\em smallest} of the set, if it has the smallest number of points or equivalently if it is the subline over the smallest field.

\begin{lemma}\label{smallest}
If $\Fqk = \Fq(\omega)$ is the smallest subfield of $\Fqn$ containing $\omega$, then the subline $l_{\omega,k}$ is the smallest subline tangent to $l_{\infty}$ and containing $l_{\omega,1}$.
\end{lemma}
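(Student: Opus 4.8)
The plan is to show two things: first, that $l_{\omega,k}$ is genuinely a subline tangent to $l_\infty$ containing $l_{\omega,1}$, and second, that no smaller such subline exists. The first part is immediate from the coordinates: $l_{\omega,k}$ meets $c=0$ only in the single point $(0,0,1)_{\Fqn}$ — wait, that point has $c=1$, so I should instead note that $l_{\omega,k}$ lies on the line $l$ with equation $a=0$, and $l\cap l_\infty=\{(0,1,0)_{\Fqn}\}$, while $(0,1,0)_{\Fqn}\notin l_{\omega,k}$ since every point of $l_{\omega,k}$ has nonzero last coordinate except $(0,0,1)_{\Fqn}$; hence $l_{\omega,k}$ is tangent to $l_\infty$. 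Containment $l_{\omega,1}\subseteq l_{\omega,k}$ is clear since $\Fq\subseteq\Fqk$. It remains to prove minimality.

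The key step is the following: suppose $m$ is any subline tangent to $l_\infty$ with $l_{\omega,1}\subseteq m$, say $m$ is an $\F_{q^j}$-subline for some $j\mid n$. Then $m$ is a line of a subplane $\PG(2,q^j)$ (equivalently, its points have coordinates in $\F_{q^j}$ with respect to some frame), but more usefully, since $l_{\omega,1}$ already contains three points with coordinates $(0,1,\omega),(0,0,1),(0,1,\omega+1)$, and a subline is determined by any three of its points, I can use these three points of $l_{\omega,1}\subseteq m$ as part of a frame adapted to $m$. Concretely: an $\F_{q^j}$-subline containing the points $(0,0,1)_{\Fqn}$ and $(0,1,\omega)_{\Fqn}$ and $(0,1,\omega+1)_{\Fqn}$ must, after normalising so that these are the "$0$", "$\infty$" and "$1$" of the subline's internal coordinates, consist exactly of the points $\{(0,1,\omega+t)_{\Fqn}\mid t\in\F_{q^j}\}\cup\{(0,0,1)_{\Fqn}\}$ — i.e. $m=l_{\omega,j}$. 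This is because the projective frame $((0,0,1),(0,1,\omega),(0,1,\omega+1))$ on the line $l$ forces the internal coordinate $t$ of the point $(0,1,\omega+t)_{\Fqn}$ to be the same as the cross-ratio coordinate, so the $\F_{q^j}$-subline through these three points is precisely $l_{\omega,j}$.

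Now $l_{\omega,1}\subseteq l_{\omega,j}=m$ forces $t\in\F_{q^j}$ for the value $t=1$, which is automatic, but crucially $(0,1,\omega)_{\Fqn}\in m$ together with $(0,0,1)_{\Fqn}\in m$ and $(0,1,\omega+1)_{\Fqn}\in m$ being consistent with the internal coordinates means we also need the point $(0,1,0)$? No — rather, the point $(0,1,0)_{\Fqn}$ is NOT in $m$ (tangency), so I argue instead via: the map $t\mapsto (0,1,\omega+t)_{\Fqn}$ identifies $m\setminus\{(0,0,1)_{\Fqn}\}$ with $\F_{q^j}+\omega$, hence $m=l_{\omega,j}$ already contains $\omega+0=\omega$ in its coordinate set only if... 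Let me restate cleanly: the three chosen points of $l_{\omega,1}$ pin down the subline, so any tangent $\F_{q^j}$-subline $m\supseteq l_{\omega,1}$ equals $l_{\omega,j}$; then $l_{\omega,1}\subseteq l_{\omega,j}$ means the point $(0,1,\omega+t)$ for all $t\in\Fq$ lies in $l_{\omega,j}$, which holds iff $\omega\in\Fqj$ (take $t$ such that $\omega+t=\omega$, i.e. compare with a point forced by the frame) — more precisely $l_{\omega,1}\ni(0,1,\omega)_{\Fqn}=(0,1,\omega+0)_{\Fqn}$ is in $l_{\omega,j}$ automatically, so the real constraint comes from requiring the three frame points themselves to have entries in $\Fqj$: since $(0,1,\omega)$ must be expressible over $\Fqj$ in the adapted frame, and comparing with the standard frame of $l$ we get $\omega\in\Fqj$. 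Thus $\Fqj\supseteq\Fq(\omega)=\Fqk$, so $q^j\ge q^k$, i.e. $|m|=q^j+1\ge q^k+1=|l_{\omega,k}|$, with equality iff $j=k$. This shows $l_{\omega,k}$ is smallest.

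The main obstacle is making precise the claim that "three points pin down the subline and comparing frames forces $\omega\in\Fqj$" — this is essentially the fact that a subline is the image of $\PG(1,q^j)$ under a map in $\PGL(2,q^n)$ and that such a map is determined by the images of three points, combined with the observation that the specific map sending the standard frame of $\PG(1,q^j)$ to $(0,0,1),(0,1,\omega),(0,1,\omega+1)$ has matrix entries generating $\Fq(\omega)$ over $\Fq$. I would handle this by writing down the explicit parametrisation $t\mapsto(0,1,\omega+t)$ and noting it extends to the projectivity $\PG(1,q^n)\to l$, $(s:t)\mapsto(0:s:\omega s+t)$, whose restriction to $\Fq$-rational (resp. $\Fqj$-rational) points is $l_{\omega,1}$ (resp. $l_{\omega,j}$); then $l_{\omega,1}\subseteq m$ with $m$ an $\F_{q^j}$-subline of $l$ forces, via uniqueness of the subline through three points, $m=l_{\omega,j}$, and then $l_{\omega,j}$ being an honest $\F_{q^j}$-subline (coordinates in $\Fqj$ up to frame change) forces $\omega\in\Fqj$. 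The rest is the trivial monotonicity of $q^j+1$ in $j$.
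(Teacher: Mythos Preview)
Your argument has two genuine errors, and both stem from mishandling the condition that a subline is \emph{tangent} to $l_\infty$.

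First, your claim that $(0,1,0)_{\Fqn}\notin l_{\omega,k}$ is false. By hypothesis $\Fqk=\Fq(\omega)$, so $\omega\in\Fqk$, hence $t=-\omega\in\Fqk$ is an admissible parameter value and $(0,1,\omega+(-\omega))_{\Fqn}=(0,1,0)_{\Fqn}\in l_{\omega,k}$. This is exactly the unique point of $l_{\omega,k}$ on $l_\infty$, and it is what makes $l_{\omega,k}$ tangent. Your reasoning there would, if correct, actually show $l_{\omega,k}$ is \emph{external} to $l_\infty$, not tangent---so the logic is inverted as well.

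Second, and more seriously, in the minimality step you correctly observe that any $\F_{q^j}$-subline through the three points $(0,0,1)_{\Fqn},(0,1,\omega)_{\Fqn},(0,1,\omega+1)_{\Fqn}$ must equal $l_{\omega,j}$; but then you try to deduce $\omega\in\F_{q^j}$ from the fact that $l_{\omega,j}$ ``is an honest $\F_{q^j}$-subline''. That implication is false: $l_{\omega,j}$ is always an $\F_{q^j}$-subline, regardless of whether $\omega\in\F_{q^j}$ (take $j=1$ for a counterexample). What you never use is the hypothesis that $m$ is \emph{tangent} to $l_\infty$. The tangency of $l_{\omega,j}$ is equivalent to $(0,1,0)_{\Fqn}\in l_{\omega,j}$, which holds iff $-\omega\in\F_{q^j}$, iff $\omega\in\F_{q^j}$. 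This is the missing link; once you insert it, the minimality follows immediately since the smallest subfield containing $\omega$ is $\Fqk$. This is precisely the paper's argument.
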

\begin{proof}
For all $k|n$, there is a unique $\F_{q^k}$-subline containing the points $(0,1,\omega)_{\Fqn}$, $(0,0,1)_{\Fqn}$, $(0,1,\omega+1)_{\Fqn}$. Hence, the $\F_{q^k}$-subline containing the points of $l_{\omega,1}$ is of the form $l_{\omega,k}$. The subline $l_{\omega,k}$ contains the point $(0,1,0)_{\Fqn}$ of $l_\infty$ if and only if $-\omega\in \F_{q^k}$, and the statement follows.
\end{proof}

\begin{lemma}\label{equivalentsublines}
Every external $\F_{q^k}$-subline to $l_{\infty}$ is equivalent under the action of the stabiliser of $l_{\infty}$ in $\PGammaL(3,q^n)$ to $l_{\omega,k}$ for some $\omega\notin \Fqk$.

Every $\F_{q^k}$-subline tangent to $l_{\infty}$ is equivalent under the action of the stabiliser of $l_{\infty}$ in $\PGammaL(3,q^n)$ to $l_{0,k}$.
\end{lemma}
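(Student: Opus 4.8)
The plan is to use the transitivity of the stabiliser $G$ of $l_\infty$ in $\PGammaL(3,q^n)$ on suitable configurations of points. An $\F_{q^k}$-subline $m$ is determined by three collinear points; call the line through them $\ell$. The key dichotomy is whether $\ell = l_\infty$ (not our case here), whether $\ell$ meets $l_\infty$ in a point lying on $m$ (the tangent case), whether $\ell$ meets $l_\infty$ in a point not on $m$ (the secant case, not treated in this lemma), or whether $\ell = l_\infty$; for tangent and external sublines the relevant line $\ell \ne l_\infty$, so $\ell \cap l_\infty$ is a single point $R$. I would first reduce to the case $\ell = l$, the line $a = 0$: since $\PGL(3,q^n)$ (indeed already the linear part of $G$) acts transitively on (flag-type) pairs consisting of a line $l_\infty$ and a second line meeting it, and fixes $l_\infty$, there is an element of $G$ sending $\ell$ to $l$; so after applying it we may assume $m \subset l = \{(0,b,c)_{\Fqn}\}$, and the point $R = \ell \cap l_\infty$ becomes $(0,1,0)_{\Fqn}$.

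Next I would work inside $l \cong \PG(1,q^n)$, using coordinates $(b:c)$, with the distinguished point $R = (1:0)$ and the affine part $\{(1,t) : t \in \Fqn\} \cup \{(0,1)\}$ — note that the stabiliser of $R$ in $\PGL(2,q^n)$, acting on $l$, is the affine group $t \mapsto \alpha t + \beta$ ($\alpha \in \Fqn^*$, $\beta \in \Fqn$), and every such map extends to an element of $G$ (e.g. via the matrix with rows $(1,0,0),(0,\alpha,\beta),(0,0,1)$, possibly adjusted), and similarly the field automorphisms $t \mapsto t^{q^j}$ are induced by elements of $\PGammaL(3,q^n)$ in $G$. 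For a tangent $\F_{q^k}$-subline $m$: it contains $R$, so $m \setminus \{R\}$ is an affine $\F_{q^k}$-subline of the affine line, i.e. a coset $\{\beta_0 + \alpha_0 t : t \in \F_{q^k}\}$ for some $\alpha_0 \in \Fqn^*$, $\beta_0 \in \Fqn$; applying $t \mapsto \alpha_0^{-1}(t - \beta_0)$ normalises this to $\{t : t \in \F_{q^k}\} \cup \{R\}$, which is exactly $l_{0,k}$. For an external $\F_{q^k}$-subline $m$ (not containing $R$): $m$ lies entirely in the affine part, so $m = \{\beta_0 + \alpha_0 t : t \in \F_{q^k}\}$ with — crucially — $\alpha_0 \notin \F_{q^k}\cdot(\text{something})$; here I would pick two of its points, say the images of $t = 0$ and $t = 1$, namely $\beta_0$ and $\beta_0 + \alpha_0$, and also a third point whose parameter generates; applying an affine map I can send $\beta_0 \mapsto \omega$, $\beta_0 + \alpha_0 \mapsto \omega + 1$ for an appropriate $\omega$, turning $m$ into $l_{\omega,k}$, and the condition that $m$ is external to $l_\infty$ (equivalently, that $R \notin m$) forces $-\omega \notin \F_{q^k}$ by Lemma \ref{smallest}; replacing $\omega$ by $-\omega$ via the map $t \mapsto -t$ (which fixes $0,1\mapsto -1$, so compose with a further affine normalisation) gives $\omega \notin \F_{q^k}$ as claimed. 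Finally, any two vectors defining $m$ differ from the chosen pair by an $\F_{q^k}$-change of basis, so the normalisation is well-defined up to $\PGL(2,q^k) \le$ (induced action), which is consistent with $l_{\omega,k}$ being determined by three of its points.

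The main obstacle I expect is bookkeeping: checking that each needed map on the line $l$ — the affine transformations $t \mapsto \alpha t + \beta$ and the Frobenius twists — genuinely extends to an element of the stabiliser $G$ of $l_\infty$ in $\PGammaL(3,q^n)$ acting on the whole plane, and not merely to a collineation of $l$; this is where the explicit matrix form $X = \begin{pmatrix} x_{11}&x_{12}&0\\ x_{21}&x_{22}&0\\ x_{31}&x_{32}&1\end{pmatrix}$ from the previous section is used, together with the semilinear part. The transitivity of $\PGL(3,q^n)$ on pairs of distinct lines and the affineness of the point stabiliser on a line are standard, so the only real content is this lifting, plus the careful handling of the external case to ensure the invariant "$\omega \notin \F_{q^k}$" is exactly the obstruction to being tangent — which is precisely Lemma \ref{smallest}.
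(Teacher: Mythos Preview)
Your treatment of the tangent case is correct and close in spirit to the paper's argument. The external case, however, contains a genuine error. You claim that an external $\F_{q^k}$-subline $m$ on $l$, being contained in the affine part of $l$, must have the form $\{\beta_0 + \alpha_0 t : t \in \F_{q^k}\}$. This is false on two counts. First, such a set has only $q^k$ points, whereas $m$ has $q^k+1$. Second, and more importantly, any $\F_{q^k}$-subline of $l$ whose affine trace is a coset $\beta_0 + \alpha_0 \F_{q^k}$ necessarily contains the point $R$ at infinity (it is the subline determined by the vectors $(\beta_0,1)$ and $(\alpha_0,0)$ in $(b,c)$-coordinates), so it is \emph{tangent}, not external. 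An external subline in the affine coordinate $t=b/c$ on $l$ (for which $R$ sits at $t=\infty$ and the stabiliser of $R$ really is the affine group, as you say) is the image of $\PG(1,q^k)$ under a genuinely non-affine M\"obius transformation; for instance $l_{\omega,k}$ itself becomes $\{1/(\omega+s) : s \in \F_{q^k}\}\cup\{0\}$, not a coset. So the step ``send $\beta_0\mapsto\omega$, $\beta_0+\alpha_0\mapsto\omega+1$'' does not even get started.

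The paper sidesteps all of this by never passing to affine coordinates on $l$, and indeed by not reducing to $l$ at all. It simply takes two defining vectors for $m$, chosen with last coordinates $1$ and $\omega$, and writes down one explicit matrix in $G$ sending $(0,0,1)\mapsto(\alpha,\beta,1)$ and $(0,1,\omega)\mapsto(\gamma,\delta,\omega)$; this handles tangent and external uniformly, the tangent case being precisely $\omega\in\F_{q^k}$ (whence $l_{\omega,k}=l_{0,k}$). Your two-step strategy can be salvaged --- after moving $m$ onto $l$, translate one point of $m$ to $(0,0,1)$ and then rescale the $b$-coordinate to normalise a second defining vector to $(0,1,\omega)$ --- but the argument as written does not go through.
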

\begin{proof}
Consider an $\F_{q^k}$-subline $m$ not contained in $l_\infty$. Then $m$ is determined by two vectors in $\Fqn^3$, which we may take to be $(\alpha,\beta,1)$ and $(\gamma,\delta,\omega)$. Then the matrix
\[
\label{eqn:stabilizermatrix}
\npmatrix{u&v&0\\ \gamma -\omega \alpha & \delta - \omega \beta&0\\\alpha&\beta&1},
\]
where $u,v$ are chosen so that this matrix is invertible, maps $(0,0,1)$ to $(\alpha,\beta,1)$ and $(0,1,\omega)$ to $(\gamma, \delta,\omega)$, and hence defines a collineation $\psi$ in the stabiliser of $l_\infty$ which maps $l_{\omega,k}$ to $m$.

As $\psi$ stabilises $\l_{\infty}$, if $m$ is tangent to $l_{\infty}$, then $l_{\omega,k}$ is as well. The subline $l_{\omega,k}$ meets $l_{\infty}$ if and only if $\omega \in \Fqk$ by Lemma \ref{smallest}, in which case $l_{\omega,k} = l_{0,k}$, proving the claim.
\end{proof}

%
%\begin{remark}
%\label{rem:collineationextension}
%Each element $\phi$ of the stabiliser of $l_{\infty}$ in $\GL(3,q^n)$ induces a collineation of $\PG(2n,q)$ whose restriction to $H_{\infty}$ stabilises the Desarguesian spread $\D$, and also the spreads $\D_k$ for each divisor $k$ of $n$. We can extend this to a collineation $\phi^*$ of $\PG(2n,q^n)$ in such a way so that $\phi^*$ fixes the indicator set for each $\D_k$, and maps conjugate points to conjugate points.
%
%Let
%\[
%\phi = \npmatrix{ x&u&0\\y&v&0\\z&w&1}.
%\]
%Then we define the image under $\phi^*$ of a point of $H^*_{\infty}$ by
%\[
%\npmatrix{x a_0 + y b_0& x^q a_1+y^q b_1&\ldots&x^{q^{n-1}}a_{n-1} + y^{q^{n-1}}b_{n-1}\\u a_0 + v b_0& u^q a_1+v^q b_1&\ldots&u^{q^{n-1}}a_{n-1} + v^{q^{n-1}}b_{n-1}\\ &&0&}_{\Fqn}.
%\]
%UNFINISHED
%\end{remark}
%
%
%It is important to notice that the action on the points of $\PG(2n,q)$ of the stabiliser of $l_\infty$ in $\PGammaL(2n+1,q)$ stabilises the Desarguesian spread $\D$.
%
%
%
%
%Since the indicator set of the Desarguesian spread is unique, the induced action in $\PGammaL(2n+1,q^n)$ also stabilises the indicator set of $\D$. Moreover, for every divisor $k|n$, the indicator set for $\D_k$ and thus the spread $\D_k$ as well are stabilised by the induced action.

\subsection{Sublines tangent to $l_{\infty}$}\label{Sectiontangentsublines}
%Consider $l_{0,1}$.
\begin{theorem}\label{tangentsubline}
\hspace*{0.1cm}
\begin{itemize}
\item[{\rm (a)}]
The affine points of an $\F_{q^k}$-subline $m$ in $\PG(2,q^n)$ tangent to $l_\infty$ correspond to the points of a $k$-dimensional affine space $\pi$ in the ABB-representation, such that $\overline{\pi} \cap H_\infty$ is an element of $\D_k$.

\item[{\rm (b)}]
Conversely, let $\pi$ be a $k$-dimensional affine space of $\Sigma$ such that $\overline{\pi}$ intersects $H_\infty$ in a spread element of $\D_k$. Then the points of $\pi$ correspond to the affine points of an $\F_{q^k}$-subline $m$ tangent to $l_\infty$.
\end{itemize}
\end{theorem}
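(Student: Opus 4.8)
The plan is to reduce both directions to a direct coordinate computation using the machinery set up in Sections~\ref{indicator} and \ref{InducedGroupAction}, exploiting Lemma~\ref{equivalentsublines} to put the subline in a convenient normal form. First I would invoke Lemma~\ref{equivalentsublines} to assume, without loss of generality, that $m = l_{0,k}$, i.e. the $\F_{q^k}$-subline determined by the vectors $(0,1,0)$ and $(0,0,1)$; this is legitimate because by Lemma~\ref{IGA}(a) the induced map $\chi$ on $\PG(2n,q)$ carries the ABB-representation of a subline to the ABB-representation of its image, and $\chi$ stabilises $\D_k$ as well as the family of $k$-spaces meeting $H_\infty$ in an element of $\D_k$. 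Thus it suffices to prove the statement for the one explicit subline $l_{0,k}$.

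For part~(a), I would compute $\phi(l_{0,k})$ directly. The affine points of $l_{0,k}$ are the points $(0,1,t)_{\Fqn}$ with $t \in \F_{q^k}$; dividing by $t$ (for $t \neq 0$) these can also be written $(0,1/t,1)_{\Fqn}$, but it is cleaner to keep them as $(0,1,t)_{\Fqn}$ and note $\phi$ maps the affine point $(0,1,t)_{\Fqn}=(0,1/t,1)_{\Fqn}$ to $(0,1/t,1)_{\Fq}$, equivalently (clearing the $\Fq$-denominator is not allowed since $1/t \notin \Fq$ in general, so instead) I would parametrise directly: the affine point with representative $(0,s,1)_{\Fqn}$, $s \in \F_{q^k}$, maps under $\phi$ to $(0,s,1)_{\Fq} \in \PG(2n,q)$. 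As $s$ ranges over $\F_{q^k}$ this is visibly the set of $q^k$ points of a $k$-dimensional affine subspace $\pi$ of $\Sigma$, namely the affine part of the projective $k$-space $\overline\pi = \{(0,s,c)_{\Fq} \mid s \in \Fqk,\ c \in \Fq\}$. Then $\overline\pi \cap H_\infty = \{(0,s,0)_{\Fq}\mid s \in \F_{q^k}^*\}$, which is exactly the spread element $\{(0\cdot x, 1\cdot x, 0)_{\Fq}\mid x \in \F_{q^k}^*\}$ of $\D_k$ in the explicit description of $\D_k$ given at the end of Subsection~\ref{indicator}. (A small point to check: that $\overline\pi$, defined as the projective completion of $\pi$ inside $\Sigma$, really has dimension $k$ and really is $\{(0,s,c)_{\Fq}\}$; this is immediate since $\{(0,s,1)\mid s\in\Fqk\}$ spans a $k$-dimensional affine flat whose direction space is $\{(0,s,0)\mid s\in\Fqk\}$.) For the general tangent $\F_{q^k}$-subline $m = \chi_0(l_{0,k})$ we then apply $\chi = \phi\chi_0\phi^{-1}$ and Lemma~\ref{IGA}(a): $\chi$ maps $\pi$ to a $k$-space $\chi(\pi)$ whose completion meets $H_\infty$ in $\chi(\overline\pi\cap H_\infty) \in \D_k$, as desired.

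For part~(b), the converse, I would argue that the group $G$ (stabiliser of $l_\infty$) acts transitively on the set of pairs $(\pi, E)$ where $\pi$ is a $k$-dimensional affine space of $\Sigma$, $E \in \D_k$, and $\overline\pi \cap H_\infty = E$ — or at least transitively enough that every such $\pi$ is the $\chi$-image of the specific $\pi_0 = \phi(l_{0,k})$ constructed above. Concretely: given $\pi$ with $\overline\pi\cap H_\infty = E \in \D_k$, first there is an element of $\D$ containing $E$ (since $\D_k$ refines $\D$), and the affine translation plane structure lets me assume after a translation (an element of $G$ fixing every point of $l_\infty$) that $\pi$ passes through a fixed affine point; then, since $G$ acts transitively on $\D$ and the stabiliser of a spread element acts transitively on the $\D_k$-members inside it and on the affine $k$-spaces through a point with that given intersection, I can map $\pi$ to $\pi_0$. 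Applying $\phi^{-1}$ and Lemma~\ref{IGA}(a) then shows $\pi$ corresponds under $\phi^{-1}$ to a tangent $\F_{q^k}$-subline. The main obstacle I anticipate is precisely this transitivity claim in part~(b): I must verify that there are no ``extra'' $k$-spaces meeting $H_\infty$ in a $\D_k$-element beyond the $G$-orbit of $\pi_0$ — equivalently, a dimension/counting argument that the number of $k$-spaces of the stated type equals the number of tangent $\F_{q^k}$-sublines, or a direct coordinate argument parametrising all such $k$-spaces. A clean way around this is to note that a $k$-space $\pi$ with $\overline\pi\cap H_\infty = E$ is determined by $E$ together with one affine point and the ``slope'' data, count these, count the tangent $\F_{q^k}$-sublines through the corresponding point at infinity, and check the counts agree; combined with part~(a) (which gives an injection from sublines to such $k$-spaces) this forces surjectivity. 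The rest is routine verification.
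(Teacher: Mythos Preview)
Your proposal is correct and follows essentially the same approach as the paper. For part~(a) you do exactly what the paper does: reduce to $l_{0,k}$ via Lemma~\ref{equivalentsublines}, compute $\phi$ on the affine points $(0,s,1)_{\Fqn}$ with $s\in\Fqk$, identify the resulting affine $k$-space and its intersection with $H_\infty$ as a $\D_k$-element, then transport back via Lemma~\ref{IGA}(a). (Your brief confusion over ``clearing the $\Fq$-denominator'' is harmless --- the rescaling $(0,1,t)_{\Fqn}=(0,1/t,1)_{\Fqn}$ is over $\Fqn$, so $1/t\in\Fqk$ is all that matters, and you land on the correct parametrisation anyway.)

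For part~(b), the paper skips the transitivity argument you sketch and goes straight to a counting argument, but a slightly different one from yours: it observes that two distinct affine points determine both a unique tangent $\Fqk$-subline and a unique $k$-space meeting $H_\infty$ in a $\D_k$-element (the latter because the point $\langle B_1,B_2\rangle\cap H_\infty$ lies in a unique element of $\D_k$), so the two families have the same size and the injection from~(a) is a bijection. Your proposed count (via the point at infinity, an affine point, and slope data) would also work, and your key closing observation --- that~(a) gives an injection, so equal counts force surjectivity --- is exactly the paper's mechanism. The transitivity route you first suggest is valid in principle but, as you yourself note, requires verifying a transitivity claim that is more work than the double count; the paper avoids it entirely.
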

\begin{proof}
(a)
%It is clear that in the ABB-representation the points of a tangent $\F_{q^k}$-subline correspond to $q^k$ points contained in an $n$-space of $\Sigma$ intersecting $H_\infty$ in a spread element $E \in \D$ and the spread element $E$ itself.
From Lemma \ref{equivalentsublines}, the tangent $\F_{q^k}$-subline $m$ is equivalent to $l_{0,k}$ under an element of the stabiliser $G$ of $l_\infty$ in $\PGL(3,q^n)$, say $\chi_{{}_0}(l_{0,k})=m$ for $\chi_{{}_0}\in G$. Note that $l_{0,k}$ consists of the following points:
\[
l_{0,k} %&= \left\{ (0,1,b)_{\Fqn} \mid b \in \F_{q^k} \right\} \cup \left\{(0,0,1)_{\Fqn}\right\}\\
     = \left\{ (0,b,1)_{\Fqn} \mid b \in \F_{q^k} \right\} \cup \left\{(0,1,0)_{\Fqn}\right\}.
\]
In the ABB-representation the point $(0,1,0)_{\Fqn} \in l_{0,k}$ corresponds to the spread element $\{ (0,x,0)_{\Fq} \mid x \in \F_{q^n}^*\} \in \D$.
By definition of the ABB-map $\phi$, the affine points of $l_{0,k}$ in the ABB-representation form a set $\pi$ defined as follows:
\[
\pi = \left\{ \left(0,b,1\right)_{\Fq} \mid b \in \F_{q^k} \right\}.
\]
Using the embedding $\iota$ of $\pi$ in $\PG(2n,q^n)$ we obtain the set of points
\[
 \left\{ (0,\ldots,0;b,b^q,\ldots,b^{q^{n-1}};1)_{\Fqn} \mid b\in \F_{q^k} \right\}.
\]
Since $b^{q^{k+j}}=b^{q^j}$ for all $b \in \Fqk$, it is clear that the projective completion $\overline{\pi}$ of $\pi$ intersects $H_\infty$ in an element of $\D_k$, more specifically in the element $\langle Q, Q^\sigma, \ldots, Q^{\sigma^{k-1}}\rangle \cap H_{\infty}$, where
\begin{align*}
Q&=(v+v^{\sigma^k}+v^{\sigma^{2k}}+\ldots+v^{\sigma^{n-k}})_{\F_{q^n}},\\
v &= (0,0,\ldots,0;1,0,\ldots,0;0),\ (v)_{\F_{q^n}}\in \nu.
\end{align*}

We know that $\phi(m)=\phi(\chi_{{}_0}(l_{0,k}))$, and that the affine points of $\phi(l_{0,k})$ form the point set of an affine space $\pi$ such that its projective completion $\overline{\pi}$ intersects $H_\infty$ in an element of $\D_k$. By Lemma \ref{IGA}, we know that $\phi\chi_{{}_0}=\chi\phi$ and that $\chi$ stabilises the set of $k$-spaces meeting $H_\infty$ in an element of $\D_k$. This implies that also the affine points of $\phi(m)$ form the point set of an affine space such that its projective completion intersects $H_\infty$ in an element of $\D_k$.
%This means that the affine points of our original subline $m$ also correspond to the affine points of a $k$-dimensional space. From Section \ref{InducedGroupAction} we know that $\chi_{{}_0}\cdot\phi=\phi\cdot\chi$, where $\phi$ is the map corresponding to the ABB-representation and where both $\phi$ and $\chi$ stabilise the spread $\D_k$. It follows that the projective extension of the $k$-dimensional space corresponding to the subline $m$ is also an element of $\D_k$.

(b) To prove that the converse also holds, it is sufficient to use a counting argument.

The number of affine points of $\PG(2,q^n)\setminus l_{\infty}$ is equal to the number affine points of $\PG(2n,q)\setminus H_{\infty}$. Hence the number of choices for any two distinct affine points is the same in both cases.

For any $k$, two affine points $A_1, A_2$ in $\PG(2,q^n)$ define a unique $\F_{q^k}$-subline tangent to $l_\infty$. Because of (a) this subline corresponds to a $k$-dimensional space intersecting $H_\infty$ in an element of $\D_k$.

The two affine points $A_1, A_2$ correspond in the AAB-representation to two affine points $B_1, B_2$ in $\PG(2n,q)$. There is a unique element of $\D_k$ containing the point $\langle B_1, B_2 \rangle \cap H_\infty$, hence there is a unique $k$-dimensional space containing $B_1$, $B_2$ and intersecting $H_\infty$ in an element of $\D_k$.

From this we see that the number of $\F_{q^k}$-sublines tangent to $l_\infty$ is equal to the number of $k$-spaces intersecting $H_\infty$ in an element of $\D_k$. Hence, the statement follows.
\end{proof}
\subsection{Sublines disjoint from $l_{\infty}$}

A {\em $u$-arc} $\A$ in $\PG(k,q)$ is a set of $u$ points in $\PG(k,q)$, such that every subset of $k+1$ points of $\A$ span $\PG(k,q)$. A $u$-arc in $\PG(k,q)$ is also called a set of $u$ points {\em in general position}.
\begin{definition}%\cite[Section 27.5]{GGG}
A {\em normal rational curve} in $\PG(k,q)$, $2 \leq k \leq q$, is
a $(q+1)$-arc $\PGL$-equivalent to the $(q+1)$-arc
\[\{(0,\ldots,0,1)_{\Fq}\}\cup\{  (1,t,t^2,t^3,\ldots,t^{k})_{\F_q} \mid t\in \F_q\}.\]

A set $\C$ in $\PG(N,q)$ is a normal rational curve of {\em degree $k$} if and only if it is a normal rational curve in a $k$-dimensional subspace of $\PG(N,q)$, or equivalently, if and only if there exist linearly independent vectors $e_0,e_1,\ldots, e_k$ in $V(N+1,q)$ such that
\begin{align*}
\C &= \{ ( s^k e_0+s^{k-1} t e_1 + \ldots + s t^{k-1}e_{k-1} + t^k e_k  )_{\Fq} \mid s,t \in \Fq \}.
% &= \{ \langle e_0+t e_1 + \ldots + t^k e_k  \rangle_{\Fq} \mid t \in \Fq \}\cup \{\langle e_k \rangle_{\Fq}\}.
 %\\&= \{ c(s,t)_{\Fq} \mid s,t \in \Fq\},
\end{align*}
\end{definition}
%where $c$ is the regular/rational map from $\PG(1,q)$ into $\PG(N,q)$ induced by $(s,t) \mapsto \sum_{i=0}^{k-1} s^{k-i}t^i e_i$.
A normal rational curve of degree 1 is just a projective line.

\begin{result}\label{NRC}{\rm \cite[Theorem 21.1.1]{FPS3D}}
Consider a $(k + 3)-arc$ $\A$ in $\PG(k,q)$, $ k+2 \leq q$, then there exists a
unique normal rational curve containing all points of $\A$.
\end{result}

To illustrate this, consider $k+3$ points defined by vectors $u_0,\ldots,u_k,a,b$, where $a = \sum_{i=0}^k a_i u_i$ and  $b = \sum_{i=0}^k b_i u_i$. Then the unique normal rational curve through these points may be parametrised as
\[
\left\{ \left(\sum_{i=0}^k \prod_{j=0,j \ne i}^k (a_j s - b_j t) u_i\right)_{\Fq} \middle| \ s,t \in \Fq \right\}.
\]
Then the point $(u_j)_{\F_q}$ corresponds to taking $(s,t)=(b_j,a_j)$, the point $(a)_{\F_q}$ to $(1,0)$ and the point $(b)_{\F_q}$ to $(0,1)$.

Consider a normal rational curve $\C$ of $\PG(k,q)$, $2\leq k \leq q$, and the embedding of $\PG(k,q)$ as a subgeometry of $\PG(k,q^n)$. Because of the previous result, a normal rational curve $\C^*$ in $\PG(k,q^n)$ containing the points of $\C$ is unique and we call this the \emph{$\F_{q^n}$-extension} $\C^*$ of $\C$.

As we have seen, a normal rational curve $\C$ is the point set of an algebraic variety in $\PG(k,q)$ defined by the parameter $t \in \Fq$. The extension $\C^*$ of $\C$ can be obtained as the point set from the variety which is obtained by allowing the parameter $t$ to range over $\Fqn$.

Before we get to the theorem characterising disjoint sublines, we need the following lemma.

\begin{lemma}\label{3pointsNRC}
%Let $A,B,C$ be three non-collinear affine points of $\PG(2n,q)$, contained in an $n$-space containing an element $E$ of $\D$. Then there is a unique $k|n$ such that there exists a normal rational curve of degree $k$ in this $n$-space containing the points $A,B,C$, and whose extension to $\PG(2n,q^n)$ intersects the indicator set $\{\Pi, \Pi^{\sigma}, \ldots, \Pi^{\sigma^{k-1}}\}$ of $\D_k$ in $k$ conjugate points.
Let $A,B,C$ be three non-collinear affine points of $\PG(2n,q)$, contained in an $n$-space containing an element $E$ of $\D$ and consider the line $l := \langle A,B,C\rangle \cap H_{\infty}$. Consider the set $S$ of values $i$ for which some element $E_i$ of $\D_i$ contains the line $l$. %Then there is a unique $k \in S$ such that the set of $A,B,C$ and the $k$ conjugate points generating the element $E_k$ are in general position.
For every $m\neq \min(S)$ in $S$, the points $A,B,C$ and the $m$ conjugate points generating the element $E_m$ are not in general position.
\end{lemma}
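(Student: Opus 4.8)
The plan is to exploit the explicit coordinatisation set up in Subsections \ref{ABB}--\ref{indicator}, reducing the statement to a purely algebraic (Vandermonde-type) fact about the conjugates $P, P^\sigma, P^{\sigma^2}, \ldots$ of a point $P$ of the line $\nu$. First I would note that, since $A,B,C$ are three non-collinear affine points lying in an $n$-space through a spread element $E \in \D$, they determine an $\F_{q^n}$-subline: indeed by (the argument underlying) Theorem \ref{tangentsubline}, such an $n$-space is the ABB-image of an affine line of $\PG(2,q^n)$, and $A,B,C$ are the images of three affine points of that line; the point $l = \langle A,B,C\rangle \cap H_\infty$ is then the spread element $E$ itself, corresponding to the point at infinity of that affine line. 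Via $\iota$, $E^* = \langle P, P^\sigma, \ldots, P^{\sigma^{n-1}}\rangle$ for the unique $P \in \nu$ with $\iota(E)^* = E^*$, and $l^* \subset E^*$ is a point of $E^*$ defined over $\F_{q^n}$ (equivalently, a $\sigma$-fixed point), which one checks has the shape $(P + P^{\sigma^k} + \cdots + P^{\sigma^{n-k}})_{\F_{q^n}}$ precisely for those $k$ with $l \subset E_k$ for some $E_k \in \D_k$; so $S$ is exactly the set of such divisors $k$, and $\min(S) = k_0$ is the smallest field over which this subline "descends".

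Next I would set up coordinates: after applying a suitable element of the stabiliser $G$ (using Lemma \ref{IGA}(a), which lets us replace $A,B,C$ by the image of a representative subline without changing the conclusion, as collinearity/general position of points and conjugates is preserved by $\chi^*$ — here I invoke Lemma \ref{IGA}(b), that $\chi^*$ commutes with $\sigma$), I may assume $E$ is the spread element $\{(0,x,0)_{\F_q}\}$ and that $A,B,C$ have the concrete form $\iota(0,b_i,1)_{\F_q} = (0,\ldots,0;b_i,b_i^q,\ldots,b_i^{q^{n-1}};1)_{\F_{q^n}}$ for three scalars $b_0,b_1,b_2 \in \F_q$ in arithmetic progression. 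The element $E_m \in \D_m$ through $l$ is $\langle \pi, \pi^\sigma, \ldots, \pi^{\sigma^{m-1}}\rangle \cap H_\infty$ with $\pi = \langle P, P^{\sigma^m}, \ldots, P^{\sigma^{n-m}}\rangle \cap \Lambda_m$, so "the $m$ conjugate points generating $E_m$" are $P, P^{\sigma^m}, \ldots, P^{\sigma^{n-m}}$ (that is $n/m$ points, with $v = (0,\ldots,0;1,0,\ldots,0;0)$ and $v^{\sigma^j}$ the obvious cyclic shifts), and I must show that the $3 + n/m$ vectors $\{A,B,C\} \cup \{v^{\sigma^{jm}} : 0 \le j < n/m\}$ are linearly dependent over $\F_{q^n}$ whenever $m \ne k_0$, $m \in S$.

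The computational heart is a rank count. Projecting onto the $2n+1$ coordinate slots, $A,B,C$ each have last coordinate $1$ and vanishing first $n$ slots, while each $v^{\sigma^{jm}}$ is a standard basis vector supported in a single one of the middle $n$ slots (indexed by residues $\equiv -jm \pmod n$); and $\{A-B, A-C\}$ span a $2$-dimensional space sitting inside the span of the standard vectors $\{w, w^\sigma, \ldots, w^{\sigma^{n-1}}\}$ with $w=(0,\ldots,0;1,0,\ldots,0;0)$, the $j$-th coordinate of $A-B$ being $b_0^{q^j}-b_1^{q^j}$. The key point: $m \in S$ forces $l \subset E_m$, which forces $l^*$ (hence the direction of the subline) to be defined over $\F_{q^m}$, and one translates this into the statement that $b_0-b_1$ and $b_0-b_2$ already lie in $\F_{q^{\gcd(m,\,\cdot)}}$ — more precisely that the two vectors $A-B, A-C$ lie in the span of just those $n/m$ shifted basis vectors $\{v^{\sigma^{jm}}\}$ (since $(b_0-b_1)^{q^{jm}}$ only takes $m$ — really $n/\!\gcd$... — distinct periodic values). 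Hence $\langle A,B,C\rangle \subseteq \langle A\rangle + \langle v^{\sigma^{jm}} : j\rangle$, a space of dimension at most $1 + n/m$, which is strictly less than $3 + n/m$ as soon as the configuration is non-degenerate, i.e. as long as we are not forced back to the minimal case. I expect the main obstacle to be precisely the bookkeeping in this last step: pinning down exactly why membership $m \in S$ (as opposed to $m = \min(S)$) guarantees the two difference vectors collapse into the span of the $\sigma^{jm}$-shifts, and handling the edge cases where $n/m$ is small (e.g. $m=n$, where $E_n = E$ itself and the statement is immediate) so that the dependence is genuine rather than vacuous.
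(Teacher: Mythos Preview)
There is a fundamental misidentification in your setup that derails the rest of the argument. The ``$m$ conjugate points generating $E_m$'' are \emph{not} the $n/m$ points $P, P^{\sigma^m}, \ldots, P^{\sigma^{n-m}}$ with $P \in \nu$: those points lie in the indicator spaces $\nu,\nu^{\sigma^m},\ldots$ and span an $(n/m-1)$-subspace of the indicator space $\Pi$ of $\D_m$, not $E_m$. The element $E_m\in\D_m$ is $(m-1)$-dimensional and is generated by $m$ points $Q, Q^\sigma, \ldots, Q^{\sigma^{m-1}}$ with $Q$ a single point of $\Pi$ (this is exactly how indicator sets work, cf.\ Subsection~\ref{fieldreduction}). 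So what must be shown is that the $m+3$ points $\{A,B,C,Q,Q^\sigma,\ldots,Q^{\sigma^{m-1}}\}$ in the $m$-space $\langle A, E_m\rangle$ fail to be an arc, not a linear dependence among $3+n/m$ vectors. Your rank count is aimed at the wrong target. Two secondary issues compound this: (i) $l = \langle A,B,C\rangle \cap H_\infty$ is a \emph{line} (the plane $\langle A,B,C\rangle$ meets $H_\infty$ in a line contained in $E$), not a point and not all of $E$; (ii) your normalisation to $b_i \in \F_q$ forces $A,B,C$ to be collinear in $\PG(2n,q)$, contradicting the hypothesis --- the $b_i$ must lie in $\F_{q^n}$ with $b_1-b_0,\,b_2-b_0$ independent over $\F_q$.

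The paper's proof is coordinate-free and exploits the nesting $E_k\subset E_m$ for $k=\min(S)$, $m=tk$, $t>1$. The set $\{Q,Q^{\sigma^k},\ldots,Q^{\sigma^{(t-1)k}}\}$ spans a $(t-1)$-space $\Pi_0$, and $\{\Pi_0,\Pi_0^\sigma,\ldots,\Pi_0^{\sigma^{k-1}}\}$ is the indicator set of the induced $(k-1)$-spread of $E_m$; in particular each $\Pi_0^{\sigma^i}$ meets the $k$-space $\langle A,B,C,E_k\rangle$ in the single point $P^{\sigma^i}$. Removing the $t$ conjugates lying in $\Pi_0^{\sigma^{k-1}}$, the remaining $(k-1)t+3$ points of $\{A,B,C\}\cup L(Q)$ all lie in $\langle A,B,C,E_k,\Pi_0,\ldots,\Pi_0^{\sigma^{k-2}}\rangle$, a space of dimension at most $(k-1)t+1\le m-1$; this is enough to destroy general position. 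If you want to salvage a coordinate approach you would need to write down $Q$ explicitly (e.g.\ $Q=(v+v^{\sigma^m}+\cdots+v^{\sigma^{n-m}})_{\F_{q^n}}$ in the standard model) and then carry out an honest arc-violation check among the $m+3$ points, which is considerably more bookkeeping than the geometric argument.
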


\begin{proof}
Consider the line $l := \langle A,B,C\rangle \cap H_{\infty}$. Let $k$ be the minimum of $S$. It then follows from the definition of $\D_i$ that $S$ is the set of all values $tk$ where $tk|n$.

%From this it follows that no $k+1$ points of $\A\cup \{A,B,C\}$ are contained in a $(k-1)$-space, so the points of $\A \cup \{A,B,C\}$ are in general position.

Let $m=tk$ with $t>1$. Suppose $L(Q)=\{Q,Q^\sigma,\ldots,Q^{\sigma^{m-1}}\}$ is a set of conjugate points generating $E_m$. The $(k-1)$-space $E_k$ is part of the $(k-1)$-spread $\S=\D_k \cap E_m$. Consider the $(t-1)$-space $\Pi=\langle Q, Q^{\sigma^{k}}, \ldots, Q^{\sigma^{(t-1)k}}\rangle$. The set $\{\Pi, \Pi^\sigma, \ldots, \Pi^{\sigma^{k-1}}\}$ is the indicator set of the spread $\S$ of $E_m$. This implies that we can label the indicator set $L(P)=\{P,P^\sigma,\ldots,P^{\sigma^{k-1}}\}$ for $E_k$  in such a way that the point $P^{\sigma^i}$ is contained in $\Pi^{\sigma^{i}}$ for all $i$.

We prove that the $m+3$ points of the set $L(Q)\cup\{A,B,C\}$, which is contained in an $m$-dimensional space, are not in general position by constructing $u+2$ points that are contained in a $u$-space, where $u\leq m-1$.

The space $\langle L(P), A, B, C\rangle$ has dimension $k$, and every space $\Pi^{\sigma^{i}}$ intersects it in exactly one point, namely the point $P^{\sigma^i}$.
Consider the $(k-1)t+3$ points contained in $L(Q)\cup\{A,B,C\}$, but not contained in $\Pi^{\sigma^{k-1}}$. These points are contained in the space $\langle L(P), A, B,C, \Pi,\Pi^{\sigma},\ldots,\Pi^{\sigma^{k-2}}\rangle$ which has dimension at most $k+(k-1)(t-1)=(k-1)t+1$. Since $t>1$, we have that $u=(k-1)t+1\leq m-1=kt-1$ and our claim follows.
\end{proof}

The {\em norm map} of $\F_{q^k}$ over $\Fq$ is denoted as follows: $N_k(\alpha) = \prod_{i=0}^{k-1} \alpha^{q^i}\in \Fq$.

\begin{theorem}\label{externalsubline}
A set of points $\C$ in $\PG(2n,q)$, $q\geq n$, is the ABB-representation of an $\Fq$-subline $m$ of $\PG(2,q^n)$ external to $l_\infty$ if and only if
\begin{itemize}
\item[{\rm (i)}] $\C$ is a normal rational curve of degree $k$ contained in a $k$-space intersecting $H_{\infty}$ in an element of $\D_k$,
\item[{\rm (ii)}] its extension $\C^*$ to $\PG(2n,q^n)$ intersects the indicator set \\
$\{\Pi, \Pi^{\sigma}, \ldots, \Pi^{\sigma^{k-1}}\}$ of $\D_k$ in $k$ conjugate points.
    \end{itemize}
Moreover, the smallest subline containing $m$ and tangent to $l_\infty$ is an $\F_{q^k}$-subline.

%
%
%\hspace*{0.1cm}
%\begin{itemize}
%\item[{\rm (a)}]
%An $\Fq$-subline $m$ in $\PG(2,q^n)$, external to $l_\infty$, such that the smallest subline containing $m$ and tangent to $l_\infty$ is an $\Fqk$-subline, corresponds to a normal rational curve $\C$ in the ABB-representation, such that
%\begin{itemize}
%\item[{\rm (i)}] $\C$ has degree $k$, and is contained in a $k$-space intersecting $H_{\infty}$ in an element of $\D_k$;
%\item[{\rm (ii)}] its extension $\C^*$ to $\PG(2n,q^n)$ intersects the indicator set \\
%$\{\Pi, \Pi^{\sigma}, \ldots, \Pi^{\sigma^{k-1}}\}$ of $\D_k$ in conjugate points.
%    \end{itemize}
%
%\item[{\rm (b)}]
%Conversely,
%let $\pi$ be an $n$-space of $\Sigma$ intersecting $H_\infty$ in a spread element $E$ of $\D$. If $\C$ is a normal rational curve in $\pi$ of degree $k$, containing no points of $H_\infty$, such that
%its extension $\C^*$ to $\PG(2n,q^n)$ intersects the indicator set $\{\Pi, \Pi^{\sigma}, \ldots, \Pi^{\sigma^{k-1}}\}$ of $\D_k$ in conjugate points,
%then $\C$ corresponds to an $\Fq$-subline $m$ external to $l_\infty$. Moreover, the smallest subline containing $m$ and tangent to $l_\infty$ is an $\F_{q^k}$-subline.
%\end{itemize}
\end{theorem}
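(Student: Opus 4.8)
The plan is to derive this from part~(i) of the theorem together with Theorem~\ref{tangentsubline}, so I would place the argument immediately after the equivalence (i)--(ii) has been established. Write $\C=\phi(m)$, which by (i) is a normal rational curve of degree $k$; let $\Sigma_k:=\langle\C\rangle$ be the $k$-space it spans (a degree-$k$ normal rational curve is an arc of $q+1\ge k+1$ points in a $k$-space, hence spans it, and this $k$-space is precisely the one occurring in (i)), and put $E_k:=\Sigma_k\cap H_\infty$, which by (i) is an element of $\D_k$ and so a hyperplane of $\Sigma_k$.

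First I would exhibit a tangent $\F_{q^k}$-subline containing $m$. Since $E_k\in\D_k$ is a hyperplane of $\Sigma_k$, the affine $k$-space $\pi:=\Sigma_k\setminus E_k$ has $\overline{\pi}\cap H_\infty=E_k\in\D_k$, so by Theorem~\ref{tangentsubline}(b) it is the ABB-representation of the affine points of an $\F_{q^k}$-subline $m'$ tangent to $l_\infty$. As $m$ is external to $l_\infty$, every point of $m$ is affine, whence $\C=\phi(m)\subseteq\pi$; injectivity of $\phi$ then forces $m\subseteq m'$. Thus $m'$ is an $\F_{q^k}$-subline, tangent to $l_\infty$, and containing $m$.

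Next I would check minimality. Let $m''$ be any subline tangent to $l_\infty$ with $m\subseteq m''$, say an $\F_{q^j}$-subline with $j\mid n$. By Theorem~\ref{tangentsubline}(a) its affine points correspond to a $j$-dimensional affine space $\pi''$ with $\overline{\pi''}\cap H_\infty\in\D_j$, and $m\subseteq m''$ gives $\C\subseteq\pi''\subseteq\overline{\pi''}$; since $\C$ spans $\Sigma_k$, we get $\Sigma_k\subseteq\overline{\pi''}$ and hence $k\le j$. Therefore $q^k$ is the least order of a subline tangent to $l_\infty$ that contains $m$, and it is attained by $m'$, which is the assertion. (One may double-check this by transporting $m$ to the normal form $l_{\omega,1}$ of Lemma~\ref{equivalentsublines}: this is harmless because the group induced in Lemma~\ref{IGA} preserves each $\D_j$, commutes with the ABB-map, and maps an $\F_{q^j}$-subgeometry to an $\F_{q^j}$-subgeometry; one then recognises $k=[\Fq(\omega):\Fq]$ and the statement becomes exactly Lemma~\ref{smallest}.)

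There is no real obstacle once part~(i) is in hand; the only point needing a moment's care is the minimality step, i.e.\ that a tangent subline containing $m$ cannot be ``smaller'' than an $\F_{q^k}$-subline. This is immediate from the fact that $\C$, being a normal rational curve of degree $k$, spans a $k$-space, so any projective space containing it has dimension at least $k$.
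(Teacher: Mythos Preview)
Your proposal addresses only the ``Moreover'' clause, not the equivalence (i)--(ii) itself; you say so explicitly, so I will evaluate it on those terms. For that clause your argument is correct: given that $\C$ is a degree-$k$ normal rational curve lying in a $k$-space meeting $H_\infty$ in an element of $\D_k$, Theorem~\ref{tangentsubline}(b) yields a tangent $\F_{q^k}$-subline containing $m$, and the span argument together with Theorem~\ref{tangentsubline}(a) forces any tangent $\F_{q^j}$-subline containing $m$ to satisfy $j\ge k$. The hypothesis $q\ge n$ ensures $q\ge k$, so the curve does span the $k$-space as you claim.

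The paper organises the logic differently. It \emph{defines} $k$ at the outset as the order of the smallest tangent subline containing $m$ (so the ``Moreover'' clause is built into the definition of $k$), reduces via Lemmas~\ref{smallest} and~\ref{equivalentsublines} to $m=l_{\omega,1}$ with $\Fq(\omega)=\Fqk$, and then carries out an explicit coordinate computation to verify that $\phi(l_{\omega,1})$ is a normal rational curve of degree exactly $k$ whose extension meets the indicator set of $\D_k$ in $k$ conjugate points. Your approach instead takes the degree $k$ of the curve as given by (i) and recovers the minimality a posteriori via Theorem~\ref{tangentsubline}; this is cleaner for the ``Moreover'' statement alone and avoids any repetition of the coordinate work, but it does not stand on its own---it presupposes that the coordinate computation (or something equivalent) has already produced (i), in particular the fact that the degree of the curve equals the extension degree $[\Fq(\omega):\Fq]$. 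So the two arguments are complementary rather than interchangeable: the paper's proof simultaneously establishes (i), (ii), and identifies $k$; yours is a tidy postscript that extracts the ``Moreover'' from (i) without revisiting coordinates.
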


\begin{proof}
Part (i) has been proven in \cite[Theorem 4.2]{LaZa201?}. We include a proof using coordinates, as it will be necessary for proving part (ii).

%Let us first consider the image of an external $\F_q$-subline $m$ under the ABB-map.

(i)
%It is clear that the external $\F_q$-subline $m$ corresponds to a set of $q+1$ points contained in an $n$-space of $\Sigma$ intersecting $H_\infty$ in a spread element $E \in \D$. Because of Theorem \ref{tangentsubline}, we know that no three of these points are collinear. 
Consider the smallest $k$, such that $m$ is contained in a tangent $\Fqk$-subline. It follows from Theorem \ref{tangentsubline} that the $q+1$ points corresponding to $m$ are contained in a $k$-space intersecting $H_\infty$ in an element of $\D_k$.

By Lemmas \ref{smallest} and \ref{equivalentsublines} we know that $m$ is $G$-equivalent to the $\F_q$-subline $l_{\omega,1}$, say $m = \chi_{{}_0}(l_{\omega,1})$, where $\omega \in \Fqn$ such that $\Fq(\omega)=\Fqk$ and $\chi_{{}_0}$ in the stabiliser $G$ of $l_{\infty}$ in $\PGL(3,q^n)$.

By definition, $\phi(l_{\omega,1})$, where $\phi$ is the ABB-map, is a set $\C_\omega$ defined as follows:
\[
\C_{\omega} = \left\{ \left(0,\frac{1}{\omega+ t},1\right)_{\Fq} \middle| \ t \in \Fq \right\}\cup \left\{(0,0,1)_{\Fq}\right\}.
\]
Now $N_k(t+\omega) = \prod_{i=0}^{k-1} (\omega^{q^i}+t)\in \Fq$ for all $t \in \Fq$, and is never zero, and so
\[
\C_{\omega} = \left\{ \left(0,\prod_{i=1}^{k-1} (\omega^{q^i}+t),\prod_{i=0}^{k-1} (\omega^{q^i}+t)\right)_{\Fq} \middle| \ t \in \Fq \right\}\cup \left\{(0,0,1)_{\Fq}\right\}.
\]
Expanding the products, we find $k+1$  non-zero vectors $v_i \in \Fqn \times \Fqn \times \Fq$ (which depend only on $\omega$) such that
\[
\C_{\omega} = \{ ( v_0+t v_1 + \ldots + t^k v_k  )_{\Fq} \mid t \in \Fq \}\cup \{( v_k )_{\Fq}\}.
\]
The vectors $v_i$ span $0 \times \Fqk \times \Fq$, hence $\C_{\omega}$ is a normal rational curve of degree $k$, contained in a projective $k$-space meeting $H_{\infty}$ in an element of $\D_k$, as claimed.

(ii)
Using the embedding $\iota$ to embed $\C_{\omega}$ in $\PG(2n,q^n)$ gives the points
%\[
% \left(0,0,\ldots,0;\prod_{i=1}^{k-1} (\omega^{q^i}+t),\prod_{i=2}^{k} (\omega^{q^i}+t),\ldots,\prod_{i=n}^{n+k-2} (\omega^{q^i}+t);\prod_{i=0}^{k-1} (\omega^{q^i}+t)\right)_{\Fqn},
%\]
%OR...
\[
 \left(0,0,\ldots,0;\prod_{i=1}^{k-1} (\omega^{q^i}+t),\prod_{i=1}^{k-1} (\omega^{q^{i+1}}+t),\ldots,\prod_{i=1}^{k-1} (\omega^{q^{i+n-1}}+t);\prod_{i=0}^{k-1} (\omega^{q^i}+t)\right)_{\Fqn},
\]
for $t \in \Fq$. Since $\omega \in \Fqk$, the $(n+j)$-th entry is equal to the $(n+j+k)$-th entry for all $0\leq j \leq n-1$. The extension $\C^*_{\omega}$ of $\C_{\omega}$ is the normal rational curve obtained by allowing $t$ to range over $\Fqn$. The intersection of $\C^*_{\omega}$ with ${H}_{\infty}^*$ is then obtained by finding the elements $t \in \Fqn$ such that $\prod_{i=0}^{k-1} (\omega^{q^i}+t)= 0$. Clearly these are precisely $t = -\omega, -\omega^q,\ldots,-\omega^{q^{k-1}}$. When we consider the value $t = -\omega^{q^j}$,  then all coordinates become zero, excluding those in position $(n+j+rk)$ for $r =0,1,\ldots,n/k-1$. Hence we get that the points of $\C^*_{\omega} \cap H^*_{\infty}$ are precisely the $k$ conjugate points as follows
\[
Q^{\sigma^j}=(v^{\sigma^j}+v^{\sigma^{j+k}}+v^{\sigma^{j+2k}}+\ldots +v^{\sigma^{j+(n-k)}})_{\F_{q^n}}
\]
where $j = 0,1,\ldots, k-1$ and $P = (v)_{\F_{q^n}}$ with $v=(0,0,\ldots,0;1,0,\ldots,0;0)$, so $P\in \nu$. If $k=n$, i.e. if $\omega$ is not contained in any proper subfield of $\Fqn$, then $\C^*_{\omega}$ contains $n$ conjugate points, one on each of the lines of the indicator set $\{\nu,\nu^\sigma,\ldots,\nu^{\sigma^{n-1}}\}$. However if $k<n$, each point belongs to one of the spaces of the set $\{\pi, \pi^\sigma,\ldots,\pi^{\sigma^{k-1}}\}$, where $\pi=\langle P, P^{\sigma^k}, \ldots, P^{\sigma^{n-k}}\rangle$. It is clear that $\pi \subset \Pi$, where $\{\Pi, \Pi^\sigma,\ldots,\Pi^{\sigma^{k-1}}\}$ is the indicator set of $\D_k$.

%A similar calculation shows that the image of $l_{\omega,\lambda}$ is also a normal rational curve of degree $k$, whose extension intersects $H^*_{\infty}$ in the $k$ points
%\[ R^j=P^{\sigma^{j+k}}+\lambda^{q^{j+2k}}P^{\sigma^{j+2k}}+\ldots +\lambda^{q^{j+(n-k)}} P^{\sigma^{j+(n-k)}}, \]
%where $j = 0,1,\ldots,k-1$. It is clear that the point $R^j$ belongs to $\pi^{\sigma^j}$.

%We summarise that the points of the subline $l_{\omega,1}$ correspond to a normal rational curve $\C$ in the ABB-representation, such that
%(i) $\C$ has degree $k$, and is contained in a $k$-space intersecting $H_{\infty}$ in an element of $\D_k$ and (ii) its extension $\C^*$ to $\PG(2n,q^n)$ intersects the indicator set
%$\{\Pi, \Pi^{\sigma}, \ldots, \Pi^{\sigma^{k-1}}\}$ of $\D_k$ in conjugate points.

Now $m=\chi_{{}_0}(l_{\omega,1})$ and $\phi(m)=\phi(\chi_{{}_0}(l_{\omega,1}))=\chi(\phi(l_{\omega,1}))$ by Lemma \ref{IGA}. Since $\chi$ is a collineation, this implies that $\phi(m)$ is also a normal rational curve of degree $k$, and since $\chi$ stabilises the elements of $D_k$, this normal rational curve lies in a $k$-space intersecting $H_\infty$ in an element of $\D_k$. Now, again by Lemma \ref{IGA}, we have that $\iota\phi(m)=\iota\chi\phi(l_{\omega,1})=\chi^*\iota \phi(l_{\omega,1})$. Since $\chi^*$ stabilises the indicator sets, the unique $\Fqn$-extension of $\iota \phi(m)$ is also a normal rational curve which intersects the indicator set in conjugate points. This proves the first part of the claim.

%We already knew from (a)(i) that the affine points of our original subline $m$ correspond to a normal rational curve of degree $k$, intersecting $H_\infty$ in an element of $\D_k$.
%Now, we know that the extension $\chi^*$ of $\chi_{{}_0}$ (obtained from Section \ref{InducedGroupAction}) maps the normal rational curve to a normal rational curve such that its $\Fqn$-extension contains the conjugate points generating a subspace of $\D_k$. Moreover, we know the extension $\chi^*$ of $\chi_{{}_0}$ stabilises $\D_k$, the line $g$, and preserves conjugate pairs, hence, since the $\Fqn$-extension of the normal rational curve is unique, also the $\Fqn$-extension of the curve corresponding to $m$ contains the conjugate points that generate a subspace of $\D_k$.

To prove that the converse also holds, it is sufficient to use a counting argument. As three points on a line of $\PG(2,q^n)$ uniquely determine a subline, the number of choices for three points defining an external $\Fq$-subline of a fixed line is equal to ${q^n \choose 3} - \frac{q^n(q^n-1)(q-2)}{3.2}=\frac{q^n(q^n-1)(q^n-q)}{6}$.

By the first part of the proof, we know that three non-collinear affine points contained in an $n$-space containing an element of $\D$ are contained in a normal rational curve $\C$ satisfying (i) and (ii) for some value $k$. By Lemma \ref{3pointsNRC}, we know that through three non-collinear affine points, there is at most one value of $k$ (namely, $\min(S)$) such that there is a normal rational curve of degree $k$ satisfying (i) and (ii), so the obtained normal rational curve $\C$ is unique.  Hence it suffices to note that the number of triples of non-collinear affine points in a fixed $n$-space is $\frac{q^n(q^n-1)(q^n-q)}{6}$, which equals the number of external sublines of a fixed line, completing the proof.
\end{proof}

\begin{remark}
In the statement of \cite{BarJack1}, characterising the external sublines, the authors state that $\C^*$ meets each transversal line $\nu^{\sigma^j}$. This is due to the fact that the authors choose one representative for a subline, equivalent to our choice of $l_{\omega,1}$, where $\omega$ is a primitive element of $\F_{q^n}$. With this choice, the unique subline tangent to $l_\infty$ and containing $l_{\omega,1}$ always is the full line and in that case, we have indeed seen in Theorem \ref{externalsubline} that $\C^*$ meets each transversal line $\nu^{\sigma^j}$. But as follows from Lemma \ref{smallest}, unless $n$ is prime, there are many choices for a subline $l_{\omega,1}$ in $\PG(2,q^n)$ contradicting this statement.
\end{remark}

\begin{corollary}\label{external-Fqk-subline}
An $\Fqk$-subline $m$ in $\PG(2,q^n)$, $q\geq n$, external to $l_\infty$, such that the smallest subline containing $m$ and tangent to $l_\infty$ is an $\F_{q^r}$-subline, corresponds in the ABB-representation to a set $M$ of $q^k+1$ affine points no three collinear, contained in an $r$-space intersecting $H_\infty$ in an element of $\D_r$.
Every three points of $M$ determine an affine normal rational curve whose points are all contained in $M$.
\end{corollary}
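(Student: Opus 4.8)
The plan is to reduce everything to Theorems~\ref{tangentsubline} and~\ref{externalsubline} by slicing $m$ into $\F_q$-sublines. First I would record the easy structural facts: since $m$ is external to $l_\infty$, all $q^k+1$ of its points are affine, and since the ABB-map $\phi$ is injective on affine points and sends them to affine points, $M:=\phi(m)$ is a set of exactly $q^k+1$ affine points of $\PG(2n,q)$. I would also check, by an argument completely parallel to Lemma~\ref{smallest} (after using Lemma~\ref{equivalentsublines} to move $m$ into standard position), that among the $\F_{q^j}$-sublines tangent to $l_\infty$ and containing $m$ there is a smallest one; this makes $r$ well defined, with $k\mid r\mid n$.

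For the ambient space, I would apply Theorem~\ref{tangentsubline}(a) to the smallest tangent $\F_{q^r}$-subline $\widetilde m\supseteq m$: its affine points correspond to an $r$-dimensional affine space $\pi$ whose projective completion $\overline{\pi}$ meets $H_\infty$ in an element of $\D_r$. As the points of $m$ lie among the affine points of $\widetilde m$, we get $M\subseteq\pi$, so $\overline{\pi}$ is the required $r$-space meeting $H_\infty$ in an element of $\D_r$.

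For the ``no three collinear'' and normal rational curve assertions, I would take any three points $B_1,B_2,B_3\in M$, pull them back to three distinct collinear points $A_1,A_2,A_3$ of $m$, and let $m_0$ be the $\F_q$-subline of $\PG(2,q^n)$ through them; since $m_0\subseteq m$ it is again external to $l_\infty$, hence not tangent to $l_\infty$, so the smallest tangent subline containing $m_0$ is an $\F_{q^{r_0}}$-subline with $2\le r_0\le r$ (the lower bound is exactly the dichotomy $\omega\notin\F_q$ used in the proof of Theorem~\ref{externalsubline}, the upper bound holds because $\widetilde m$ is itself a tangent subline containing $m_0$). By Theorem~\ref{externalsubline}, $\phi(m_0)$ is a normal rational curve of degree $r_0\ge 2$, so no three of its points are collinear; in particular $B_1,B_2,B_3$ are not collinear. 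Since $m_0\subseteq m$ forces $\phi(m_0)\subseteq M$, the curve $\phi(m_0)$ is an affine normal rational curve through $B_1,B_2,B_3$ contained in $M$, and its uniqueness follows from Lemma~\ref{3pointsNRC} once one notes that $M$ lies in an $n$-space meeting $H_\infty$ in an element of $\D$ (namely $\langle\overline{\pi},E\rangle$, where $E\in\D$ is the spread element containing $\overline{\pi}\cap H_\infty$).

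I expect the main obstacle to be the bookkeeping with the integers $r$ and $r_0$: one must verify the divisibilities $k\mid r_0\mid r\mid n$ so that $\D_{r_0},\D_r$ are genuinely nested subspreads, confirm that passing from $m$ to a sub-$\F_q$-subline can only lower the degree, and keep track of the hypothesis $q\ge n$, which is needed to invoke both Theorem~\ref{externalsubline} and Lemma~\ref{3pointsNRC}. Everything else is a direct transport of Theorems~\ref{tangentsubline} and~\ref{externalsubline} along the inclusions $m_0\subseteq m\subseteq\widetilde m$.
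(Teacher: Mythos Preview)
Your proposal is correct and is precisely the argument the paper intends: the corollary is stated without proof, as an immediate consequence of Theorems~\ref{tangentsubline} and~\ref{externalsubline} transported along the inclusions $m_0\subseteq m\subseteq\widetilde m$.

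One small correction to your anticipated obstacle: the divisibilities $k\mid r_0$ and $r_0\mid r$ are neither true in general nor needed anywhere in the argument. All you actually use is $r\mid n$ and $r_0\mid n$ (automatic from the subfield structure), together with $r_0\ge 2$, which you correctly derive from $m_0$ being external. The containment $\phi(m_0)\subseteq M\subseteq\pi$ holds simply because $m_0\subseteq m\subseteq\widetilde m$ at the level of point sets, with no nesting of $\D_{r_0}$ inside $\D_r$ required. Also, your appeal to Lemma~\ref{3pointsNRC} for uniqueness is slightly more than the statement demands: the phrase ``determine an affine normal rational curve'' is satisfied already by the image of the unique $\F_q$-subline of $m$ through the three chosen points, so existence is all that is being asserted.
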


%Three points of $\PG(1,q^n)$ define a unique $\Fq$-subline.
Consider an $\Fqk$-subline $m$ of $\PG(1,q^n)$. The unique $\Fq$-subline defined by any three points of $m$ is completely contained in $m$. Moreover, this property gives a characterisation of an $\Fqk$-subline, as seen in the following result.

\begin{result}{\rm \cite{PsHyOv}} \label{moeilijk} A set $S$ of at least $3$ points in $\PG(1,q^n)$, $q>2$, such that the $\Fq$-subline determined by any three points of $S$ is entirely contained in $S$, defines an $\F_{q^k}$-subline of $\PG(1,q^n)$ for some $k|n$.
\end{result}

%Suppose $n$ is a prime power and $m$ an external $\Fqk$-subline. Suppose the smallest subline containing $m$ and tangent to $l_\infty$ is an $\F_{q^r}$-subline $m'$. Take any $\Fq$-subline $m_0$ of $m$, then clearly the smallest subline containing $m_0$ and tangent to $l_\infty$ is the same $\F_{q^r}$-subline $m'$. From this and using the previous result we can find a characterisation of external sublines for the case $n$ a prime power.
\begin{corollary}\label{external-Fqk-subline}
An $\Fqk$-subline $m$ in $\PG(2,q^n)$, $n$ a prime power, $q\geq n>2$, external to $l_\infty$, such that the smallest subline containing $m$ and tangent to $l_\infty$ is an $\F_{q^r}$-subline, corresponds in the ABB-representation to
 a set $M$ of $q^k+1$ affine points of $\PG(2n,q)$ if and only if
\begin{itemize}
\item[(i)] the point set $M$ spans an $r$-space intersecting $H_\infty$ in an element of $\D_r$,
\item[(ii)] every three points of $M$ define a normal rational curve $\C$ of degree $r$ in $\PG(2n,q)$ whose points are all contained in $M$, such that its $\Fqn$-extension $\C^*$ intersects the indicator set
$\{\Pi, \Pi^{\sigma}, \ldots, \Pi^{\sigma^{r-1}}\}$ of $\D_r$ in $r$ conjugate points.
\end{itemize}
\end{corollary}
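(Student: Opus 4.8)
The plan is to deduce this corollary by combining the characterisation of external $\F_q$-sublines (Theorem~\ref{externalsubline}) with the characterisation of $\F_{q^k}$-sublines of $\PG(1,q^n)$ (Result~\ref{moeilijk}), essentially by gluing together the normal rational curves coming from triples of points. The forward direction is the easy one: if $m$ is an $\F_{q^k}$-subline external to $l_\infty$ whose smallest tangent containing subline is the $\F_{q^r}$-subline, then any three points of $m$ lie on a common $\F_q$-subline $m'$ contained in $m$; this $m'$ is external to $l_\infty$ (it lies on the same line of $\PG(2,q^n)$, which misses $l_\infty$), so by Theorem~\ref{externalsubline} its image is a normal rational curve $\C$ of some degree $k'$ satisfying (i) and (ii), and by the "moreover" part of Theorem~\ref{externalsubline} together with Lemma~\ref{smallest} one checks $k' = r$: the smallest tangent subline containing $m'$ is contained in the smallest tangent subline containing $m$, and since the three chosen points already determine $m$'s behaviour relative to $l_\infty$ (they determine the line, hence the tangent sublines through them), the degrees agree. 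Thus $M=\phi(m)$ spans an $r$-space meeting $H_\infty$ in an element of $\D_r$ (this $r$-space is the span of any $\C$, as $m\supseteq m'$ forces $M$ into the $r$-space of Theorem~\ref{tangentsubline}), giving (i), and every three points give a degree-$r$ normal rational curve contained in $M$ with the required extension property, giving (ii).

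For the converse, suppose $M$ is a set of $q^k+1$ affine points of $\PG(2n,q)$ satisfying (i) and (ii). First I would pull $M$ back to a set $S = \phi^{-1}(M)$ of $q^k+1$ points of $\PG(2,q^n)$; since $M$ lies in an $r$-space meeting $H_\infty$ in an element of $\D_r$, Theorem~\ref{tangentsubline}(b) shows these points are affine points of an $\F_{q^r}$-subline $m_r$ tangent to $l_\infty$, so $S$ lies on a single line $\ell$ of $\PG(2,q^n)$ disjoint from $l_\infty$, and we may work in $\PG(1,q^n) = \ell$. Now take any three points of $S$: they span a unique $\F_q$-subline $m'$ of $\ell$, and I claim $m' \subseteq S$. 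Indeed the three corresponding points of $M$ determine, by hypothesis (ii), a degree-$r$ normal rational curve $\C \subseteq M$ whose $\F_{q^n}$-extension meets the indicator set of $\D_r$ in $r$ conjugate points; by Theorem~\ref{externalsubline} (converse direction) $\C$ is exactly the ABB-representation of the external $\F_q$-subline $m'$, so $\phi^{-1}(\C) = m' \subseteq \phi^{-1}(M) = S$. Hence $S$ is closed under taking $\F_q$-sublines of triples, and $|S| = q^k + 1 \geq 4 > 3$, so Result~\ref{moeilijk} applies (using $q > 2$) and $S$ is an $\F_{q^{k'}}$-subline of $\ell$ for some $k' \mid n$; counting points forces $q^{k'}+1 = q^k+1$, i.e. $k' = k$, so $m := S$ is an $\F_{q^k}$-subline. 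It is external to $l_\infty$ since it lies on $\ell$ which misses $l_\infty$. Finally, the smallest tangent subline containing $m$: it is contained in $m_r$ (the $\F_{q^r}$-subline from Theorem~\ref{tangentsubline}(b)), and it must be at least an $\F_{q^r}$-subline because $m$ contains $\F_q$-sublines $m'$ whose own smallest tangent superline is $\F_{q^r}$ by Theorem~\ref{externalsubline} applied to the curves $\C$ of degree $r$ in (ii); hence it is exactly $\F_{q^r}$, matching the hypothesis.

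**The main obstacle** I anticipate is the bookkeeping around the degree parameter: one must be careful that the "$r$" appearing in hypotheses (i) and (ii) is forced to be the same as the index of the smallest tangent superline of the reconstructed subline $m$, and that this is consistent across all triples of points of $M$. The key leverage here is Lemma~\ref{3pointsNRC} together with the "moreover" clause of Theorem~\ref{externalsubline}: through any three non-collinear affine points in an $n$-space containing a spread element there is a \emph{unique} value of degree for which a normal rational curve satisfying (i)--(ii) exists, so the degree $r$ is an honest invariant of the configuration and cannot jump between triples. One also needs $n$ to be a prime power (as stated) only insofar as it guarantees, via the lattice of divisors, that the "smallest tangent superline" index $r$ divides $k$ and the subspread $\D_r$ is well-defined; I would flag explicitly that $q \geq n > 2$ is needed for both Theorem~\ref{externalsubline} (requiring $q \geq n$) and Result~\ref{moeilijk} (requiring $q > 2$). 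The remaining steps — the point counts $q^{k'}+1 = q^k+1 \Rightarrow k' = k$, and that $\phi$ and $\phi^{-1}$ respect the incidence of sublines with lines — are routine given the earlier development.
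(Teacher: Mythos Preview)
Your overall strategy---reduce to Theorem~\ref{externalsubline} on each $\F_q$-subline of $m$, then glue using Result~\ref{moeilijk}---is exactly the paper's approach, and your converse direction is in fact more explicit than the paper's (which simply says ``follows immediately from Theorem~\ref{externalsubline}''). Two points need tightening.

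First, a small slip: the line $\ell$ carrying $S$ is \emph{not} disjoint from $l_\infty$ (every line of $\PG(2,q^n)$ other than $l_\infty$ meets it in one point); what you mean is that $S$ itself, consisting of affine points, misses $l_\infty$, so the resulting $\F_{q^k}$-subline is external.

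Second, and more substantively, your justification that $k'=r$ in the forward direction does not actually use the prime-power hypothesis correctly. You establish $k'\le r$ (the smallest tangent subline through $m_0$ sits inside the one through $m$), but the sentence ``the three chosen points already determine $m$'s behaviour relative to $l_\infty$'' does not give the reverse inequality, and your later remark that the prime-power condition ensures ``$r$ divides $k$'' is backwards (it is $k\mid r$) and misses the point. The genuine content is this: when $n$ is a prime power, the sublines containing a fixed $\F_q$-subline $m_0$ form a \emph{chain} (one for each divisor of $n$), so the smallest tangent one through $m_0$ is comparable with $m$; it cannot be contained in $m$ (since $m$ is external), hence it contains $m$, hence it contains $m'$, hence equals $m'$. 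Equivalently, for $a\notin\F_{q^k}$ one has $\F_q(a)=\F_{q^k}(a)$ because the subfield lattice is totally ordered---this is precisely how the paper argues it. Without this, different triples in $m$ could give $\F_q$-sublines whose smallest tangent superlines have different indices (think $n=6$, $k=2$, where an $\F_{q^3}$-subline through $m_0$ might already be tangent), and property~(ii) with a fixed $r$ would fail. Your appeal to Lemma~\ref{3pointsNRC} only gives uniqueness of the degree for a \emph{single} triple, not constancy across triples.
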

\begin{proof} 
Let $m'$ be the $\F_{q^r}$-subline containing $m$ and tangent to $\l_\infty$. Any three points of $m$ define an $\F_q$-subline $m_0$ completely contained in $m$. We claim that, because $n$ is a prime power, any subline containing $m_0$ is either contained in $m$ or contains $m$. Hence, the smallest subline containing $m_0$ and tangent to $l_\infty$ equals $m'$. This holds for the following reason: consider $\PG(1,q^n)$ where $m_0$ and $m$ are the canonical $\F_q$- and $\F_{q^k}$-subline respectively, hence $m_0$ is contained in $m$. Take the point $(1,a)_{\F_{q^n}}$, $a\in\F_{q^n}$ for which $\F_{q^r}=\F_{q^k}(a)$. The subline containing $m$ and the point $(1,a)_{\Fqn}$ is then a $\F_{q^r}$-subline, say $m'$. The smallest subline containing $m_0$ and the point $(1,a)_{\Fqn}$ also equals $m'$, since $\F_{q}(a)=\F_{q^k}(a)=\F_{q^r}$, because $n$ is a prime power and $a\notin \F_{q^k}$.

The statement then follows immediately from Theorem \ref{externalsubline}.
\end{proof}

\subsection{Sublines contained in $l_{\infty}$}\label{Sectionsecantsublines}

\begin{theorem}\label{secantsubline}
%Consider a Desarguesian $(n-1)$-spread $\D$ of $\PG(2n-1,q)$ and the projective ABB-representation $\overline{A(\D)}$ of $\PG(2,q^n)$. Suppose $l_\infty$ is the line at infinity.
Let $\S$ be a set of $q^k+1$ elements of the Desarguesian spread $\D$ of $H_\infty\simeq\PG(2n-1,q)$, $q>2$. Then the following statements are equivalent:
\begin{itemize}
\item[{\rm(1)}]
$\S$ is the image in the ABB-representation of an $\F_{q^k}$-subline of $l_\infty$,
\item[{\rm(2)}]
for any three elements of $\S$, the unique regulus through them is contained in $\S$,
\item[{\rm(3)}]
there exists a $(2k-1)$-dimensional subspace of $H_{\infty}$ intersecting each element of $\S$ in a $(k-1)$-dimensional space,
\item[{\rm(4)}]
there exists a $(2k-1)$-dimensional subspace of $H_{\infty}$ intersecting each element of $\S$ in a $(k-1)$-dimensional space of $\D_k$.
\end{itemize}
\end{theorem}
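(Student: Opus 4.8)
The plan is to prove the chain of equivalences $(1)\Leftrightarrow(4)\Rightarrow(3)\Rightarrow(2)\Rightarrow(1)$, using the Segre-style description of the Desarguesian spread $\D$ and its subspread $\D_k$ in terms of indicator sets (Subsection \ref{fieldreduction}) together with the explicit coordinates fixed in Subsection \ref{indicator}.

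For $(1)\Rightarrow(4)$, I would take an $\F_{q^k}$-subline $\ell$ of $l_\infty$. By field reduction, $\ell$ corresponds to a set $\S$ of $q^k+1$ spread elements of $\D$; applying the decomposition of the field reduction map $\FFF = \FFF_2 \circ \FFF_1$ from the Remark (with the given divisor $k$), the subline $\ell$ of $\PG(1,q^n)\subset l_\infty$ becomes, under $\FFF_1$, a single point of $\PG(\tfrac{2n}{k}-1,q^k)$, i.e.\ an element of the subspread $\D_k$. Concretely, using the coordinate description of $\D_k$ as $\{(ax,bx,0)_{\Fq}\mid x\in\Fqk\}$, the union of the spread elements of $\D_k$ lying inside the elements of $\S$ is exactly the $\F_{q^k}$-span of two fixed vectors of $\Fqn\times\Fqn$, hence a $(2k-1)$-dimensional subspace $W$ of $H_\infty$ over $\F_q$; by construction $W$ meets each element of $\S$ in one element of $\D_k$, which is a $(k-1)$-space. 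This gives $(4)$, and $(4)\Rightarrow(3)$ is immediate since an element of $\D_k$ is in particular a $(k-1)$-dimensional subspace.

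For $(3)\Rightarrow(2)$: suppose $W$ is a $(2k-1)$-space of $H_\infty$ meeting each $E\in\S$ in a $(k-1)$-space. Pick three elements $E_1,E_2,E_3$ of $\S$. The three $(k-1)$-spaces $W\cap E_i$ lie in the $(2k-1)$-space $W$ and are pairwise disjoint, so they extend to a regulus $\R_0$ in $W$ (a regulus of $(k-1)$-spaces in a $(2k-1)$-space is determined by any three of its members). Now I would argue that each line transversal to $W\cap E_1, W\cap E_2, W\cap E_3$ lies on a transversal line of the spread elements $E_1,E_2,E_3$ themselves — and since $\D$ is Desarguesian, three of its elements together with such a transversal structure determine a unique regulus $\R$ of $(n-1)$-spaces whose members are spread elements of $\D$. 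One must check that the members of $\R$ meet $W$ in the members of $\R_0$, so $\R$ consists of $q+1$ spread elements each meeting $W$ in a $(k-1)$-space; iterating over triples and using that reguli through three fixed elements are unique, one concludes the regulus through any three elements of $\S$ stays inside $\S$. Finally $(2)\Rightarrow(1)$ follows by pulling back through field reduction: the regulus-closure condition on $\S$ translates, via $\FFF^{-1}$, to the condition that the $\F_q$-subline of $\PG(1,q^n)$ determined by any three points of the corresponding point set $S\subset l_\infty$ is contained in $S$; Result \ref{moeilijk} (the hypothesis $q>2$ is used here) then forces $S$ to be an $\F_{q^{k'}}$-subline for some $k'\mid n$, and a cardinality count $|S|=q^k+1$ pins down $k'=k$.

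**The main obstacle** I anticipate is the geometric heart of $(3)\Rightarrow(2)$: showing that a $(2k-1)$-space meeting three spread elements $E_1,E_2,E_3\in\D$ in $(k-1)$-spaces forces the whole regulus through $E_1,E_2,E_3$ to meet that same $(2k-1)$-space in a regulus of $(k-1)$-spaces. The clean way to handle this is to descend to the indicator-set picture in $\Sigma^*=\PG(2n,q^n)$: the three $(k-1)$-spaces $W\cap E_i$, being fixed by $\sigma^k$ and lying in the three conjugate transversal spaces $\Pi,\Pi^\sigma,\dots$, force $W$ itself to be $\sigma^k$-invariant, hence $W$ extends to a $\PG(2k-1,q^k)$ that is the field-reduced image (over $\F_q\subset\F_{q^k}$) of a single point of $l_\infty$ — at which point $(3)$ already implies $(4)$, collapsing the two implications into one and making the regulus statement automatic from the Desarguesian structure. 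I would therefore likely reorganise the proof so that the substantive work is the equivalence $(3)\Leftrightarrow(4)$, proved via the $\sigma^k$-invariance argument, with $(2)$ folded in afterwards through Result \ref{moeilijk}.
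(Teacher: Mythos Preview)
Your chain $(1)\Rightarrow(4)\Rightarrow(3)\Rightarrow(2)\Rightarrow(1)$ works and is a genuinely different organisation from the paper's. Two small fixes: in $(1)\Rightarrow(4)$, the subline under $\FFF_1$ becomes a \emph{line} of $\PG(\tfrac{2n}{k}-1,q^k)$, not a single point; under $\FFF_2$ this line gives the $(2k-1)$-space $W$, and your concrete description of $W$ as the $\Fqk$-span of two vectors is correct. In $(3)\Rightarrow(2)$, make the pigeonhole step explicit: the $q^k+1$ pairwise disjoint $(k-1)$-spaces $W\cap E$ ($E\in\S$) already cover all of $W$, so any element of $\D$ meeting $W$ must belong to $\S$. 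Then one transversal line $\ell\subset W$ to $W\cap E_1,W\cap E_2,W\cap E_3$ is automatically a transversal to the regulus $\R\subset\D$ through $E_1,E_2,E_3$, so every member of $\R$ meets $\ell\subset W$ and hence lies in $\S$. You do not need to check that members of $\R$ meet $W$ in full $(k-1)$-spaces, and you should drop the proposed $\sigma^k$-invariance reorganisation: under hypothesis $(3)$ alone the $(k-1)$-spaces $W\cap E_i$ are not yet known to lie in $\D_k$, so they cannot be placed in the indicator spaces $\Pi^{\sigma^j}$, and the claimed invariance of $W$ does not follow.

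The paper proceeds differently. It proves $(1)\Leftrightarrow(2)$ directly via Result~\ref{moeilijk}. For $(3)\Rightarrow(1)$ and $(3)\Rightarrow(4)$ it extends $W$ to a $2k$-space $\Pi$, observes that $\S\cap W$ is a $(k-1)$-spread of $W$, and recognises $\overline{A(\S\cap W)}$ inside $\Pi$ as a subplane of order $q^k$ of $\PG(2,q^n)$ all of whose lines are sublines, hence a $\PG(2,q^k)$-subgeometry; Theorem~\ref{tangentsubline} applied to its tangent $\Fqk$-sublines then forces each $W\cap E$ into $\D_k$. For $(1)\Rightarrow(4)$ the paper extends the subline to a secant $\Fqk$-subplane and uses Theorem~\ref{tangentsubline} again to show its affine points form an affine $2k$-space. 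Your route is more self-contained, avoiding any appeal to Theorem~\ref{tangentsubline}; the paper's route has the advantage that its subplane argument simultaneously yields Theorem~\ref{secantsubplane} and feeds into Corollary~\ref{subspreads}.
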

\begin{proof}
$(1)\iff(2)$\\
The ABB-map $\phi$, when restricted to the points of the line $l_\infty$, clearly corresponds to applying field reduction to the points of $\PG(1,q^n)$. By field reduction, an $\Fq$-subline contained in $l_\infty$ corresponds to a regulus of $\D$, and vice versa.
%Any three points of an $\Fqk$-subline $m$ define a $\Fq$-subline contained in $m$. Hence the image of $m$ in the ABB-representation is a set $\S$ such that the regulus through any three elements of $\S$ is contained in $\S$.

A set $\S$ of $q^k+1$ elements of $\D$ is the image of a set $m$ of $q^k+1$ points of $l_\infty$. By the previous paragraph, we get that, for any three elements of $\S$, the unique regulus through them is contained in $\S$ if and only if for any three points of $m$ the unique $\Fq$-subline through them is contained in $m$. Because of Result \ref{moeilijk} this is true if and only if $m$ is an $\Fqk$-subline contained in $l_\infty$.

$(3)\Rightarrow(1)$ and $(3)\Rightarrow(4)$\\
Consider a $(2k-1)$-dimensional subspace $\pi$ of $H_{\infty}$ intersecting each of the $q^k+1$ elements of $\S$ in a $(k-1)$-space. We see that $\S \cap \pi$ is a $(k-1)$-spread of $\pi$. Take a $2k$-space $\Pi$ of $\PG(2n,q)$ intersecting $H_\infty$ in $\pi$. Clearly, the ABB-representation $\overline{A(\S\cap\pi)}$ contained in $\Pi$ is a projective plane of order $q^k$. This subplane is embedded in the original plane $\PG(2,q^n)$. Every $\Fqk$-subline of this plane is contained in a line of $\PG(2,q^n)$, hence the subplane is a subgeometry isomorphic to $\PG(2,q^k)$. It follows that $\S$ is the image of an $\Fqk$-subline contained in $l_\infty$.

Moreover, since every tangent $\Fqk$-subline of this subplane corresponds to a $k$-space intersecting $H_\infty$ in an element of $\D_k$ (Theorem \ref{tangentsubline}), we know that all $(k-1)$-spaces of $\S \cap \pi$ belong to $\D_k$.

$(1)\Rightarrow(4)$\\
Suppose $\S$ is the image of an $\Fqk$-subline $m$ contained in $l_\infty$.
The subline $m$ together with an $\Fqk$-subline tangent to $l_\infty$ in a point of $m$, defines a unique $\Fqk$-subplane $\mu$ of $\PG(2,q^n)$. The image of the $q^{2k}$ affine points of $\mu$ in the ABB-representation is a set $M$ of $q^{2k}$ affine points of $\PG(2n,q)$.

Every two affine points of $\mu$ are contained in an $\Fqk$-subline of $\mu$ that is tangent to $l_\infty$. Hence, by Theorem \ref{tangentsubline}, every two affine points of the set $M$ are contained in an affine $k$-space completely contained in $M$. This means that the affine line through any two points of $M$ is contained in $M$, hence $M$ is an affine subspace. Since $M$ contains $q^{2k}$ points, it is a $2k$-dimensional affine subspace. Its projective completion intersects $H_\infty$ in a $(2k-1)$-space $\pi$, and clearly this space $\pi$ can intersect $\D$ only in elements of $\S$.

Consider any affine point $P$ of $\mu$ and its image $\phi(P)$ of $M$. There are $q^k+1$ $\Fqk$-sublines of $\mu$ tangent to $l_\infty$ and containing $P$. So, there are $q^k+1$ affine $k$-spaces through $\phi(P)$ contained in $M$, and, because of Theorem \ref{tangentsubline}, their projective completion intersects $H_\infty$ in an element of $\D_k$. Hence $\pi$ intersects each element of $\S$ in an element of $\D_k$.

$(4) \Rightarrow (3)$\\
Obvious.
\end{proof}

\begin{corollary}\label{subspreads}
A Desarguesian $(n-1)$-spread of $\PG(rn-1,q)$ has a unique Desarguesian $(k-1)$-subspread for each $k|n$.
\end{corollary}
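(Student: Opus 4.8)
The plan is to reduce the uniqueness question to the characterisation of $\F_{q^k}$-sublines given in Theorem \ref{secantsubline}, applied in the rank-$2$ case $r=2$, and then to bootstrap to general $r$ by working spread-element by spread-element. First, recall that the construction in Subsection \ref{fieldreduction} already produces one Desarguesian $(k-1)$-subspread $\D_k$, so existence is settled; only uniqueness needs proof. So suppose $\S'$ is any Desarguesian $(k-1)$-subspread of the Desarguesian $(n-1)$-spread $\D$ of $\PG(rn-1,q)$. I would first treat the case $r=2$ directly, where $\D$ lives in $\PG(2n-1,q)$ and can be taken to be the spread $H_\infty$ of the ABB-setting. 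Because $\S'$ is a Desarguesian $(k-1)$-spread inducing a spread in each element of $\D$, its associated projective plane is $\PG(2,q^k)$, and the lines of this plane through a fixed point of $l_\infty$ are, after passing to the ABB-representation of $\PG(2,q^k)$ inside $\PG(2n-1,q)=H_\infty$, precisely reguli of $\D$. In particular each such line restricted to $l_\infty$ is an $\F_q$-subline, so $\S'$, viewed as a subset of the points of $\PG(1,q^n)$ via field reduction, is closed under taking the regulus (equivalently the $\F_q$-subline) through any three of its members — but $\S'$ consists of \emph{all} $q^n+1$ points, so this says every regulus through three elements of $\D$ stays inside $\D$, which is automatic. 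The real input is the other direction: the condition that $\S'$ is a subspread pins down, for each point of $\PG(1,q^n)$, the $(k-1)$-space that $\S'$ carves out of the corresponding spread element of $\D$.

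The key step is then this: fix a spread element $E\in\D$ and consider the $(k-1)$-space $E'=E\cap(\text{element of }\S')$. I claim $E'$ is forced to be the element $\pi$ of $\D_k$ inside $E$ described in Subsection \ref{fieldreduction}. To see this, take three elements $E_1,E_2,E_3$ of $\D$ whose corresponding points of $\PG(1,q^n)$ lie on a common $\F_{q^k}$-subline together with the point corresponding to $E$; the subspread $\S'$ forces the four $(k-1)$-spaces $E'_1,E'_2,E'_3,E'$ to be four elements of a single Desarguesian $(k-1)$-spread, hence to satisfy the regulus condition of Theorem \ref{secantsubline}(2) applied inside the $(2k-1)$-space they span. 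Using Theorem \ref{secantsubline}, the equivalence $(2)\iff(4)$ (with $r=2$) tells us that a set of $q^k+1$ spread elements closed under reguli, realised by $(k-1)$-spaces inside the elements of $\D$, must consist of $(k-1)$-spaces of $\D_k$; letting the $\F_{q^k}$-subline vary over all sublines through the point corresponding to $E$ shows $E'\in\D_k$, and since $\D_k$ induces an honest $(k-1)$-spread in $E$, there is exactly one element of $\D_k$ inside $E$, namely $\pi$. Thus $\S'=\D_k$ when $r=2$.

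For general $r$, I would argue that it suffices to determine, for each $E\in\D$, which $(k-1)$-space of $E$ belongs to $\S'$, and that this is a purely rank-$2$ (indeed, rank-$3$) phenomenon: any two spread elements $E_1,E_2\in\D$ together with the regulus joining them span a $\PG(2n-1,q)$ inside which $\D$ restricts to a Desarguesian $(n-1)$-spread and $\S'$ restricts to a Desarguesian $(k-1)$-subspread, so the $r=2$ case applies and forces $E'_1,E'_2$ to be the $\D_k$-elements. Running over all pairs gives $\S'=\D_k$. The main obstacle I anticipate is the bookkeeping in this last reduction: one must check that the restriction of a Desarguesian subspread to the span of a regulus is again Desarguesian (so that the $r=2$ result is genuinely applicable) and that the $(k-1)$-space $\S'$ assigns to $E$ does not depend on which companion element $E'$ one pairs it with — this is where the regulus-closure built into Theorem \ref{secantsubline}(2) does the work, guaranteeing consistency across all choices. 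Everything else is a direct invocation of the equivalences in Theorem \ref{secantsubline} and the explicit description of $\D_k$ from Subsection \ref{fieldreduction}.
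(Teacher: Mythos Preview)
Your overall architecture --- settle $r=2$ via Theorem \ref{secantsubline} and then bootstrap to general $r$ by restricting to the $(2n-1)$-space spanned by two elements of $\D$ --- matches the paper exactly. The reduction step for $r>2$ is fine and is handled in the paper in one sentence (the restriction of a Desarguesian spread to the span of two of its elements is again Desarguesian, and likewise for the subspread).

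The $r=2$ argument, however, has a genuine gap. You write ``since $\D_k$ induces an honest $(k-1)$-spread in $E$, there is exactly one element of $\D_k$ inside $E$, namely $\pi$.'' This is false: a $(k-1)$-spread of an $(n-1)$-space has $(q^n-1)/(q^k-1)$ elements, not one, so $\D_k$ contributes many $(k-1)$-spaces to each $E\in\D$, and there is no canonical ``the'' element $\pi$ to aim for. Relatedly, when you pick $E'_1,E'_2,E'_3,E'$ you never say which element of $\S'$ inside each $E_i$ you mean, and you never explain why the $(2k-1)$-space produced by Theorem \ref{secantsubline}(4) should intersect the $E_i$ in \emph{your} chosen $E'_i$ rather than in some other $(k-1)$-spaces. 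The implication $(2)\Rightarrow(4)$ only asserts the existence of \emph{some} $(2k-1)$-space meeting the $\D$-elements in $\D_k$-elements; it gives you no control over $\S'$.

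The paper's fix is to reverse the direction of the argument and let $\S'$ itself produce the $(2k-1)$-space. Take any two elements of $\S'$ lying in distinct elements of $\D$; their span is a $(2k-1)$-space $\pi$. Because $\S'$ is a Desarguesian $(k-1)$-spread, $\pi$ is partitioned by $q^k+1$ elements of $\S'$, each lying in a distinct element of $\D$. Now $\pi$ is precisely a witness to condition (3) of Theorem \ref{secantsubline} for those $q^k+1$ elements of $\D$, and the implication $(3)\Rightarrow(4)$ (more precisely, the proof of it) forces the $(k-1)$-spaces $\S'\cap\pi$ to belong to $\D_k$. Since the two starting elements of $\S'$ were arbitrary, every element of $\S'$ lies in $\D_k$, hence $\S'=\D_k$. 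This is a two-line argument once you start from $\S'$ rather than from $\D$.
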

\begin{proof}
In Subsection \ref{fieldreduction} we constructed a Desarguesian $(k-1)$-subspread of a Desarguesian $(n-1)$-spread of $\PG(rn-1,q)$.

To prove that such a spread is unique, first consider the case $r=2$. Consider the Desarguesian $(n-1)$-spread $\D$ of $\PG(2n-1,q)$ and any Desarguesian $(k-1)$-subspread $\S_k$ of $\D$. Take two elements of $\S_k$ contained in different elements of $\D$. These span a $(2k-1)$-space containing $q^k+1$ elements of $\S_k$, all contained in different elements of $\D$. From the proof of Theorem \ref{secantsubline} (3) $\Rightarrow$ (4), it follows that all elements of $\S_k$ are elements of $\D_k$.

Now consider $r>2$ and a Desarguesian $(n-1)$-spread $\D'$ of $\PG(rn-1,q)$. The spread $\D'$ induces a Desarguesian spread $\D$ in any $(2n-1)$-space spanned by two elements of $\D'$. By the previous part this spread $\D$ has a unique Desarguesian $(k-1)$-subspread, and the statement follows.
\end{proof}

\section{The ABB-representation of subplanes}\label{subplanes}

An $\F_{q^k}$-subplane is said to be {\it secant}, {\it tangent}, or {\it external} if it is secant, tangent, or external to $l_{\infty}$. In this section we will characterise secant $\Fqk$-subplanes and tangent $\Fq$-subplanes.

\subsection{Secant subplanes}\label{Sectionsecantsubplanes}

\begin{theorem}\label{secantsubplane}
A set $\Pi$ of affine points in $\PG(2n,q)$ is the ABB-representation of the affine points of an $\F_{q^k}$-subplane in $\PG(2,q^n)$ secant to $l_\infty$ if and only if
\begin{itemize}
\item[{\rm (i)}] $\Pi$ is a $2k$-dimensional affine space,
\item[{\rm (ii)}] its projective completion $\overline{\Pi}$ intersects $H_\infty$ in a $(2k-1)$-space which intersects $q^k+1$ elements of $\D$ in exactly a $(k-1)$-space.
\end{itemize}
Moreover, this $(2k-1)$-space intersects each of the $q^k+1$ spread elements of $\D$ in a $(k-1)$-space of $\D_k$.
%\begin{itemize}
%\hspace*{0.1cm}
%\item[{\rm (a)}]
%The affine points of an $\F_{q^k}$-subplane in $\PG(2,q^n)$ secant to $l_\infty$ correspond to a $2k$-dimensional affine space $\Pi$ in the ABB-representation. Its projective completion $\overline{\Pi}$ intersects $H_\infty$ in a $(2k-1)$-space which intersects $q^k+1$ elements of $\D$. Moreover, each of these $q^k+1$ spread elements of $\D$ is intersected in exactly a $(k-1)$-space of $\D_k$.
%\item[{\rm (b)}]
%Conversely, let $\Pi$ be a $2k$-dimensional affine space of $\Sigma$ such that $\overline{\Pi}$ intersects $H_\infty$ in a  $(2k-1)$-space intersecting $q^k+1$ spread elements of $\D$ in a $(k-1)$-space. Then $\Pi$ corresponds to the affine points of an $\F_{q^k}$-subplane tangent to $l_\infty$.
%\end{itemize}
\end{theorem}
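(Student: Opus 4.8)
The plan is to prove the two directions of the equivalence separately, and then to observe that the ``moreover'' statement follows for free once the characterisation is established. For the forward direction, suppose $\mu$ is an $\F_{q^k}$-subplane of $\PG(2,q^n)$ secant to $l_\infty$; then $\mu \cap l_\infty$ is an $\F_{q^k}$-subline $m$ contained in $l_\infty$. The key structural fact I would use is that any two affine points of $\mu$ lie on a unique line of $\mu$, and this line meets $l_\infty$ in a point of $m$, hence it is an $\F_{q^k}$-subline tangent to $l_\infty$. By Theorem \ref{tangentsubline}(a), the affine points of such a tangent subline correspond in the ABB-representation to a $k$-dimensional affine space $\pi$ with $\overline{\pi}\cap H_\infty \in \D_k$. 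So, writing $M := \phi(\mu \setminus l_\infty)$, any two points of $M$ lie on an affine $k$-space contained in $M$; in particular any affine \emph{line} of $\Sigma$ through two points of $M$ lies in $M$, so $M$ is an affine subspace. A count of points ($|M| = q^{2k}$) forces $\dim M = 2k$, giving (i). Its projective completion $\overline{M}$ meets $H_\infty$ in a $(2k-1)$-space $\pi$; since the $q^k+1$ tangent directions from any fixed point of $M$ each give a $k$-space through that point meeting $H_\infty$ in an element of $\D_k$, the space $\pi$ contains these $q^k+1$ elements of $\D_k$, and (being $(2k-1)$-dimensional and meeting each in a $(k-1)$-space) it meets exactly these $q^k+1$ elements of $\D$ in a $(k-1)$-space. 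This simultaneously proves (ii) and the ``moreover'' part.

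For the converse, suppose $\Pi$ is a $2k$-dimensional affine space whose projective completion meets $H_\infty$ in a $(2k-1)$-space $\pi$ meeting $q^k+1$ elements of $\D$ in a $(k-1)$-space each. Then $\pi$ together with these $(k-1)$-spaces is precisely the setup of Theorem \ref{secantsubline}, part (3): the $q^k+1$ elements of $\D$ meeting $\pi$ in a $(k-1)$-space form a set $\S$, and $\S \cap \pi$ is a $(k-1)$-spread of $\pi$. By the argument in the proof of Theorem \ref{secantsubline}, $(3)\Rightarrow(1)$ and $(3)\Rightarrow(4)$, so $\S$ is the image of an $\F_{q^k}$-subline of $l_\infty$ and all $(k-1)$-spaces of $\S\cap\pi$ belong to $\D_k$. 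Moreover, the ABB-representation of the affine plane $A(\S\cap\pi)$ contained in a $2k$-space $\langle \pi, P\rangle$ (for any affine point $P$) is a projective plane of order $q^k$ embedded in $\PG(2,q^n)$, all of whose lines meet $l_\infty$ in subgeometries, so it is a subgeometry isomorphic to $\PG(2,q^k)$ — an $\F_{q^k}$-subplane secant to $l_\infty$. It remains to identify the affine points of this subplane with $\Pi$: both are $2k$-dimensional affine spaces with the same projective completion at infinity, $\pi$; since $\Pi$ is determined by $\pi$ together with any one of its affine points, and the constructed subplane passes through each affine point of $\langle \pi, P\rangle \setminus H_\infty$, they coincide.

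The main obstacle I anticipate is the converse direction, specifically the step of showing that the constructed $\F_{q^k}$-subplane from Theorem \ref{secantsubline}'s machinery has \emph{exactly} the affine point set $\Pi$, rather than merely that $\Pi$ contains or is contained in it. This is handled by the observation that a $2k$-dimensional affine subspace of $\Sigma$ is uniquely determined by its $(2k-1)$-space at infinity together with a single affine point it contains; one then checks that $\overline{A(\S\cap\pi)}$, when set up inside a $2k$-space $\langle\pi,P\rangle$ with $P$ an affine point of $\Pi$, fills out all of $\langle\pi,P\rangle\setminus H_\infty$ as its affine points, because $A(\S\cap\pi)$ is an affine plane of order $q^k$ with exactly $q^{2k}$ points. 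The remaining verifications — the point count $|M| = q^{2k}$ in the forward direction, and the claim that $\pi$ meets \emph{no other} spread element of $\D$ in a $(k-1)$-space (it meets the rest in points, since $\D$ is a spread of $H_\infty$) — are routine dimension and counting arguments.
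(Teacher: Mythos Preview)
Your proposal is correct and follows essentially the same approach as the paper, whose proof is the single line ``Follows from the proof of Theorem \ref{secantsubline}.'' You have correctly identified that the forward direction is the argument in $(1)\Rightarrow(4)$ of that proof (any two affine points of $\mu$ lie on a tangent $\F_{q^k}$-subline, apply Theorem \ref{tangentsubline}, conclude $M$ is a $2k$-dimensional affine space whose closure at infinity is spread by $q^k+1$ elements of $\D_k$), and that the converse is the argument in $(3)\Rightarrow(1),(4)$ (the $(k-1)$-spread of $\pi$ yields, via the ABB construction in a $2k$-space through $\pi$, a subplane of order $q^k$ which one checks is a subgeometry). One minor imprecision: in your final parenthetical you say $\pi$ ``meets the rest [of $\D$] in points'', but in fact the $q^k+1$ elements of $\D_k$ already partition $\pi$ by a point count, so $\pi$ meets no other element of $\D$ at all; this does not affect the argument.
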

\begin{proof}
Follows from the proof of Theorem \ref{secantsubline}.
\end{proof}

\subsection{Equivalent subplanes under the stabiliser of $l_\infty$}
An $\Fqk$-subplane of $\PG(2,q^n)$ is uniquely determined by four projective points in general position, or alternatively, by three independent vectors in $V(3,q^n)$.
Given $\omega,\lambda \in \Fqn$, denote by  $\pi_{\omega,\lambda}$ the $\Fq$-subplane determined by the vectors $(1,0,\lambda),(0,1,\omega),(0,0,1)$, i.e.
\begin{align*}
\pi_{\omega,\lambda} &= \{ (s,u,s\lambda+u\omega+t)_{\Fqn}\mid s,t,u \in \Fq \}\\
&=\{ (s,1,s\lambda+\omega+t)_{\Fqn}\mid s,t \in \Fq \} \cup \{(1,0,\lambda+t)_{\Fqn}\mid t\in \Fq\}\cup\{(0,0,1)_{\Fqn}\}.
\end{align*}
We see that the plane $\pi_{\omega,\lambda}$ is an external subplane if and only if $\{1,\lambda,\omega\}$ are linearly independent over $\Fq$.

\begin{lemma}\label{equivalentsubplanes}
Every $\F_{q}$-subplane external to $l_{\infty}$ is equivalent under the action of the stabiliser of $l_{\infty}$ in $\PGammaL(3,q^n)$ to $\pi_{\omega,\lambda}$ for some $\omega,\lambda \in \Fqn$ such that $\{1,\omega,\lambda\}$ are linearly independent over $\Fq$.

Every $\F_{q}$-subplane tangent to $l_{\infty}$ is equivalent under the action of the stabiliser of $l_{\infty}$ in $\PGammaL(3,q^n)$ to $\pi_{\omega,0}$ where $\omega \notin \Fq$.
\end{lemma}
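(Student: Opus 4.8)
The plan is to mimic the structure of the proof of Lemma \ref{equivalentsublines}, but now with four points in general position (equivalently three independent vectors) instead of three collinear points. First I would take an arbitrary $\F_q$-subplane $\mu$ external to $l_\infty$. Since $\mu$ is determined by four points in general position and $\mu$ is not contained in $l_\infty$, I can choose the four defining points so that one of them, say $R$, is off $l_\infty$ while the line at infinity of $\mu$ meets $l_\infty$ in at most one point; concretely, pick affine points $R_0,R_1,R_2$ of $\mu$ spanning $\PG(2,q^n)$ together with a fourth suitable point, and write their vectors as $(\alpha_1,\beta_1,\gamma_1)$, $(\alpha_2,\beta_2,\gamma_2)$, $(\alpha_3,\beta_3,\gamma_3)$ with the last coordinate normalised appropriately. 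The key point is that $\pi_{\omega,\lambda}$ is the $\F_q$-subplane spanned by the "standard" vectors $(1,0,\lambda)$, $(0,1,\omega)$, $(0,0,1)$, so I need an element of the stabiliser $G$ of $l_\infty$ carrying this standard triple onto the triple of vectors defining $\mu$.

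Concretely, I would exhibit a matrix of the form
\[
X=\npmatrix{x_{11}&x_{12}&0\\ x_{21}&x_{22}&0\\ x_{31}&x_{32}&1}
\]
whose rows (acting on row vectors from the right, as fixed in Subsection \ref{InducedGroupAction}) send $(0,0,1)\mapsto(x_{31},x_{32},1)$, $(0,1,\omega)\mapsto(x_{21}+\omega x_{31},\,x_{22}+\omega x_{32},\,\omega)$, and $(1,0,\lambda)\mapsto(x_{11}+\lambda x_{31},\,x_{12}+\lambda x_{32},\,\lambda)$. Matching these three images to the three chosen vectors of $\mu$ gives a linear system for the $x_{ij}$ with $x_{31},x_{32}$ determined by the affine point $R$, and $x_{11},x_{12},x_{21},x_{22}$ then determined by the other two vectors together with the scalars $\lambda,\omega$ (which are read off as the last coordinates of those two vectors of $\mu$). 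The resulting $2\times2$ upper-left block is invertible precisely because the four points of $\mu$ are in general position, so $X$ lies in $G\le\PGL(3,q^n)\le\PGammaL(3,q^n)$, and the collineation it induces maps $\pi_{\omega,\lambda}$ onto $\mu$.

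For the tangent case I would argue as in Lemma \ref{equivalentsublines}: since the collineation $\psi$ constructed above stabilises $l_\infty$, the subplane $\pi_{\omega,\lambda}$ is tangent to $l_\infty$ if and only if $\mu$ is. The line at infinity of $\pi_{\omega,\lambda}$ is $\langle(1,0,\lambda),(0,1,\omega)\rangle\cap l_\infty=\{(a,b,0)_{\Fqn}\mid a\lambda+b\omega\in\langle1\rangle_{\Fq}\text{-dependence}\}$; computing its intersection with $l_\infty$ shows that $\pi_{\omega,\lambda}$ meets $l_\infty$ in more than one point exactly when $\{1,\lambda,\omega\}$ is $\F_q$-linearly dependent, and in exactly one point otherwise, in which case by applying a further element of $G$ fixing $l_\infty$ one can normalise to $\lambda=0$ and $\omega\notin\F_q$. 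The main obstacle I expect is the careful bookkeeping of which genericity condition on the four points of $\mu$ corresponds to invertibility of $X$ and to the linear independence of $\{1,\omega,\lambda\}$ — i.e. verifying that the correspondence between "four points in general position, external to $l_\infty$" and "$\{1,\omega,\lambda\}$ linearly independent over $\F_q$" is exact and not just sufficient; everything else is the same kind of direct matrix computation as in Lemma \ref{equivalentsublines}.
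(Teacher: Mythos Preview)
Your proposal is correct and follows essentially the same approach as the paper: pick three independent vectors $(\alpha,\beta,\omega)$, $(\gamma,\delta,\lambda)$, $(\epsilon,\zeta,1)$ defining $\mu$ and write down the explicit matrix in the stabiliser of $l_\infty$ sending $(0,1,\omega),(1,0,\lambda),(0,0,1)$ to them. The only streamlining the paper makes in the tangent case is to choose from the outset one of the three defining vectors to be the unique point of $\mu$ on $l_\infty$, so that its last coordinate is $0$ and hence $\lambda=0$ immediately, rather than applying a further normalising element of $G$ afterwards.
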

\begin{proof}
Consider an $\F_{q}$-subplane $\mu$. Then $\mu$ is determined by three distinct independent vectors in $\Fqn^3$, which we may take to be $(\alpha,\beta,\omega)$, $(\gamma, \delta, \lambda)$ and $(\epsilon,\zeta,1)$. Then the matrix
\[
\label{eqn:stabilizermatrix}
\npmatrix{\gamma-\lambda \epsilon&\delta - \lambda \zeta&0\\ \alpha -\omega \epsilon & \beta - \omega \zeta&0\\\epsilon&\zeta&1}
\]
is non-singular and maps $(0,1,\omega)$ to $(\alpha, \beta, \omega)$, $(1,0,\lambda)$ to $(\gamma, \delta, \lambda)$ and $(0,0,1)$ to $(\epsilon, \zeta, 1)$, and hence the corresponding collineation $\psi \in \PGL(3,q^n)$ maps $\pi_{\omega,\lambda}$ to $\mu$.

As $\psi$ stabilises $\l_{\infty}$, if $\mu$ is disjoint from, tangent to, or external to $l_{\infty}$, then $\pi_{\omega,\lambda}$ is also disjoint from, tangent to, or external to $l_{\infty}$. If the subplane $\mu$ is tangent to $l_\infty$, we may choose our first vector such that $\lambda=0$, and hence $\mu$ is equivalent to the subplane $\pi_{\omega,0}$. If $\omega \in \Fq$, clearly the subplane is secant to $l_\infty$.
\end{proof}

\begin{lemma}\label{smallest2}
If $\Fqk = \Fq(\omega)$ is the smallest subfield of $\Fqn$ containing $\omega$, then the smallest subplane secant to $l_{\infty}$ and containing $\pi_{\omega,0}$ is an $\Fqk$-subplane.
\end{lemma}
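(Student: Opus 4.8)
The plan is to reduce the statement to a claim about sublines inside $\pi_{\omega,0}$, to which Lemma \ref{smallest} can be applied, and then to check that no smaller secant subplane can exist. First I would recall that $\pi_{\omega,0}$ is, by definition, the $\F_q$-subplane determined by the vectors $(1,0,0),(0,1,\omega),(0,0,1)$; it meets $l_\infty$ exactly in the point $(1,0,0)_{\Fqn}$ (since $\omega\notin\Fq$), so it is tangent to $l_\infty$. The line $l$ of this subplane joining $(0,1,\omega)_{\Fqn}$ and $(0,0,1)_{\Fqn}$ is precisely the $\F_q$-subline $l_{\omega,1}$ (contained in the line $a=0$ of $\PG(2,q^n)$), and $l_{\omega,1}$ is external to $l_\infty$ because $\omega\notin\Fq=\F_{q^1}$, again by Lemma \ref{smallest}.

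Next I would argue about containment. Any $\F_{q^m}$-subplane $\mu'$ (with $m\mid n$) containing $\pi_{\omega,0}$ must contain the subline $l_{\omega,1}$, and the line of $\mu'$ through $(0,1,\omega)_{\Fqn}$ and $(0,0,1)_{\Fqn}$ is an $\F_{q^m}$-subline of $\PG(2,q^n)$ containing $l_{\omega,1}$; by the uniqueness of the $\F_{q^m}$-subline through three collinear points this subline is exactly $l_{\omega,m}$. By Lemma \ref{smallest}, $l_{\omega,m}$ is tangent to $l_\infty$ — equivalently, the subplane $\mu'$ meets $l_\infty$ in a line rather than a point, i.e. $\mu'$ is secant — if and only if $\omega\in\F_{q^m}$, and the minimal such $m$ is $k$, where $\F_{q^k}=\F_q(\omega)$. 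Conversely, the $\F_{q^k}$-subplane $\pi_{\omega,0}$ itself (viewed now as an $\F_{q^k}$-subplane, since its defining vectors certainly lie in $\F_{q^k}^3$ once $\omega\in\F_{q^k}$) does contain the point $(0,1,0)_{\Fqn}$ of $l_\infty$, so it is secant; hence $k$ is attained. Combining, $k=\F_q(\omega)$-degree is both a lower bound and is achieved, so the smallest secant subplane containing $\pi_{\omega,0}$ is an $\F_{q^k}$-subplane.

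I expect the main subtlety to be the direction showing that $\omega\in\F_{q^m}$ is \emph{necessary} for $\mu'$ to be secant: one must be sure that a subplane $\mu'$ strictly larger than $\pi_{\omega,0}$ cannot pick up a second point of $l_\infty$ "for free" without the line $l_{\omega,m}$ itself becoming tangent, i.e. that a secant $\F_{q^m}$-subplane meets $l_\infty$ in a whole $\F_{q^m}$-subline and in particular in the point where that subline meets it. This follows from the standard fact that a secant $\F_{q^m}$-subplane intersects any line of $\PG(2,q^n)$ either in a point or in an $\F_{q^m}$-subline of that line, applied to $l$ and to $l_\infty$; once that is in place the argument is just Lemma \ref{smallest} applied to the subline $l_{\omega,1}$. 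The remaining steps are routine bookkeeping with the explicit coordinates of $\pi_{\omega,\lambda}$ given above.
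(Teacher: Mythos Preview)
Your proposal is correct and is essentially the same approach as the paper's: the paper observes that the unique $\F_{q^m}$-subplane through the frame $(1,0,0)_{\Fqn},(0,1,\omega)_{\Fqn},(0,0,1)_{\Fqn},(1,1,\omega+1)_{\Fqn}$ of $\pi_{\omega,0}$ is secant if and only if it contains $(0,1,0)_{\Fqn}$, i.e.\ if and only if $-\omega\in\F_{q^m}$, which is precisely the criterion you reach by passing through the subline $l_{\omega,m}$ and Lemma~\ref{smallest}. The only difference is cosmetic: you invoke Lemma~\ref{smallest} and the fact that two lines of a secant subplane must meet, whereas the paper repeats the one-line computation directly for subplanes.
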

\begin{proof}
For all $k|n$, there is a unique $\F_{q^k}$-subplane containing the points $(1,0,0)_{\Fqn},(0,1,\omega)_{\Fqn}$, $(0,0,1)_{\Fqn}$, $(1,1,\omega+1)_{\Fqn}$, and every such subplane contains the points of $\pi_{\omega,0}$. Such an $\F_{q^k}$-subplane is secant to $l_\infty$ if and only if it contains the point $(0,1,0)_{\Fqn} \in l_\infty$, if and only if $-\omega\in \F_{q^k}$, and the statement follows.
\end{proof}

\subsection{Tangent subplanes}

Consider two normal rational curves $\C_1$ and $\C_2$ of degree $k$ and $l$ respectively. Embed both curves in $\PG(N,q)$, $N\geq k+l+1$, such that the subspaces they span, of dimension $k$ and $l$ respectively, are disjoint.

%Consider birational/regular/??? maps $a,b: \PG(1,q) \rightarrow \PG(n,q)$ such that $A =\text{Im}(a)$ and $B=\text{Im}(b)$.

Let $\rho_1,\rho_2$ be maps from $\PG(1,q) \rightarrow \PG(N,q)$ defined by $\rho_1:(s,t)_{\Fq} \mapsto \left( \sum_{i=0}^k s^{k-i} t^i e_i \right)_{\Fq}$, $\rho_2:(s,t)_{\Fq} \mapsto \left( \sum_{i=0}^l s^{l-i} t^i f_i \right)_{\Fq}$, for some vectors $e_i,f_i$ such that $\C_1 =\text{Im}(\rho_1)$ and $\C_2 = \text{Im}(\rho_2)$.

A {\em normal rational scroll of bidegree $\{k,l\}$} defined by $\C_1$, $\C_2$ consists of the set of lines of $\PG(N,q)$ defined as follows \[\left\{\left\langle \rho_1(P) , \rho_2(\psi(P)) \right\rangle \mid P \in \PG(1,q)\right\},\]
where $\psi$ is an element of $\PGL(2,q)$.
%Note that even though the definition uses the maps $a$ and $b$, it does depend on the choices for them, only on the choice of $\psi$.

The stabiliser in $\PGL(N+1,q)$ of a normal rational curve in $\PG(N,q)$ is isomorphic to $\PGL(2,q)$ \cite[Theorem 27.5.3]{GGG}. So, if we take different choices $\rho_i'$ defining $\C_i$, then $\rho_i' = \rho_i \psi_i$ for some $\psi_i \in \PGL(2,q)$, and the normal rational scroll defined by $\rho_1$, $\rho_2$ and $\psi$ is equal to the one defined by $\rho_1'$, $\rho_2'$, and $\psi_2^{-1} \psi \psi_1$. Hence the set of all normal rational scrolls defined by $\C_1$, $\C_2$ does not depend on the choice of $\rho_i$.

When $\C_1$ and $\C_2$ are general curves, such a set of lines is often called a {\em ruled surface}. The curve of the smallest degree is often called the {\em directrix}.

Consider the embedding of $\PG(N,q)$ as a subgeometry of $\PG(N,q^n)$. The $\Fqn$-extensions of the curves $\C_i$ are unique and we can consider the canonical extension $\rho_i^*:\PG(1,q^n)\rightarrow\PG(N,q^n)$ of $\rho_i$ and $\psi^* \in \PGL(2,q^n)$ of $\psi$. Clearly these define the \emph{$\F_{q^n}$-extension} of a normal rational scroll and such an extension is unique.

Before considering the characterisation of external subplanes, we introduce a lemma on the existence of normal rational scrolls.

\begin{lemma}\label{uniquescroll}
Let $\C_1, \C_2$ be two normal rational curves in $\Sigma$, and $\C_1^*,\C_2^*$ their respective extensions to $\Sigma^*$. Consider the points $P \in \C_1^*\backslash\C_1$, $Q \in \C_2^*\backslash\C_2$ such that
$P$ is not  contained in the $\F_{q^2}$-extension of $\C_1$. %(in the case $n$ prime).
Then there exists at most one normal rational scroll $S^*$ containing $\C_1^*$ and $\C_2^*$ such that $S^*$ contains the line $\langle P, Q\rangle$, and such that $S^*$ meets $\Sigma$ in a normal rational scroll $S$ containing $\C_1$ and $\C_2$. Moreover, if such a scroll exists, then it contains the lines $\langle P^{\sigma^i}, Q^{\sigma^i} \rangle$ for each $i$, where $\Sigma = \mathrm{Fix}(\sigma)$.
\end{lemma}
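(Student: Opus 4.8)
The plan is to reduce the problem to one of uniqueness of a projectivity $\psi^*\in\PGL(2,q^n)$ matching up parametrisations of the two curves, and then to exploit the fact that $\psi^*$ must restrict to $\Sigma$ and must carry the $\sigma$-action on one curve to the $\sigma$-action on the other. First I would fix parametrisations $\rho_1^*,\rho_2^*\colon\PG(1,q^n)\to\Sigma^*$ of $\C_1^*,\C_2^*$ coming from $\Fq$-parametrisations $\rho_1,\rho_2$ of $\C_1,\C_2$ (so that $\rho_i$ is defined over $\Fq$ and $\rho_i(\PG(1,q))=\C_i$, $\rho_i(\PG(1,q^n))=\C_i^*$). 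A normal rational scroll $S^*$ containing $\C_1^*$ and $\C_2^*$ is, by definition, determined by a single $\psi^*\in\PGL(2,q^n)$ via $S^*=\{\langle\rho_1^*(R),\rho_2^*(\psi^*(R))\rangle\mid R\in\PG(1,q^n)\}$; and the condition that $S^*$ meet $\Sigma$ in a normal rational scroll $S$ containing $\C_1,\C_2$ forces $\psi^*$ to be defined over $\Fq$, i.e.\ $\psi^*=(\psi)^*$ for $\psi\in\PGL(2,q)$. (For this last point I would argue that since $\C_1,\C_2\subseteq\Sigma$ span disjoint subspaces and $\rho_i$ is defined over $\Fq$, the lines $\langle\rho_1(R),\rho_2(\psi^*(R))\rangle$ for $R\in\PG(1,q)$ lie in $\Sigma$ precisely when $\psi^*(R)\in\PG(1,q)$ for all $R\in\PG(1,q)$, i.e.\ $\psi^*$ fixes the subline $\PG(1,q)$, which by Lemma \ref{smallest}-type reasoning — or the fact that a projectivity of $\PG(1,q^n)$ fixing $q+1\ge 4$ collinear points of a subline is the $\Fqn$-extension of a projectivity over $\Fq$ — means $\psi^*$ is defined over $\Fq$.)

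Next I would pin down $\psi^*$ using the point $P$. Write $P=\rho_1^*(R_0)$ for a unique $R_0\in\PG(1,q^n)\setminus\PG(1,q)$; since $P$ is not on the $\F_{q^2}$-extension $\C_1^{(2)}$ of $\C_1$, in fact $R_0\notin\PG(1,q^2)$, so the field $\Fq(R_0)$ (meaning the smallest subfield over which $R_0$ is rational) is $\F_{q^m}$ with $m\ge 3$ — in particular $R_0,R_0^\sigma,R_0^{\sigma^2}$ are three distinct points of $\PG(1,q^n)$. The requirement $\langle P,Q\rangle\in S^*$ forces $\rho_2^*(\psi^*(R_0))=Q$, and since $\rho_2^*$ is a bijection onto $\C_2^*$ this determines $\psi^*(R_0)$ as the unique $R_1$ with $\rho_2^*(R_1)=Q$. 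Now apply $\sigma$: because $\psi^*$ is defined over $\Fq$ and the $\rho_i$ are defined over $\Fq$, we get $\psi^*(R_0^\sigma)=R_1^\sigma$ and $\psi^*(R_0^{\sigma^2})=R_1^{\sigma^2}$. Thus $\psi^*$ is a projectivity of $\PG(1,q^n)$ sending the three distinct points $R_0,R_0^\sigma,R_0^{\sigma^2}$ to $R_1,R_1^\sigma,R_1^{\sigma^2}$; such a projectivity is unique. Hence $S^*$ is unique, proving the "at most one" assertion.

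Finally, the "moreover" clause is essentially already in hand: if such a scroll $S^*$ exists then, by definition, it contains the line $\langle\rho_1^*(R),\rho_2^*(\psi^*(R))\rangle$ for every $R$; taking $R=R_0^{\sigma^i}$ and using $\psi^*(R_0^{\sigma^i})=R_1^{\sigma^i}$ together with $\rho_1^*(R_0)^{\sigma^i}=\rho_1^*(R_0^{\sigma^i})=P^{\sigma^i}$ and $\rho_2^*(R_1)^{\sigma^i}=Q^{\sigma^i}$ shows $\langle P^{\sigma^i},Q^{\sigma^i}\rangle\in S^*$ for all $i$. The main obstacle I anticipate is the careful justification that the constraint "$S^*\cap\Sigma$ is a normal rational scroll containing $\C_1,\C_2$" really does force $\psi^*$ to be defined over $\Fq$ (rather than merely forcing $q+1$ of the scroll lines to lie in $\Sigma$ for some other reason) and the verification that the hypothesis $P\notin\C_1^{(2)}$ genuinely yields three distinct conjugates $R_0,R_0^\sigma,R_0^{\sigma^2}$; both are short but need the right lemma on subgeometries of $\PG(1,q^n)$, which I would cite from Result \ref{moeilijk} or the earlier discussion of fixed subspaces of $\sigma$.
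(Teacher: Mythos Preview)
Your proposal is correct and follows essentially the same approach as the paper: both reduce to showing that the required $\psi^*$ must lie in $\PGL(2,q)$ (because it fixes the canonical $\Fq$-subline) and is then uniquely determined by the single condition $\psi^*(R_0)=R_1$. The only cosmetic difference is that the paper phrases uniqueness as ``the stabiliser of $R_0$ in $\PGL(2,q)$ is trivial when $R_0\notin\PG(1,q^2)$'' (since a nonidentity element of $\PGL(2,q)$ has its fixed points in $\PG(1,q^2)$), whereas you deduce it from $\sigma$-equivariance and the three-point determination of a projectivity; these are equivalent formulations of the same fact.
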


\begin{proof}
Let $\C_i^*$ be defined by the map $\rho_i^*$ from $\PG(1,q^n)$ into $\Sigma^*$ such that $\C_i$ is the image under $\rho_i^*$ of the $\Fq$-subline  $l = \{(s,t)_{\Fqn} \mid s,t \in \Fq, (s,t) \ne (0,0)\} \simeq \PG(1,q)$ of $\PG(1,q^n)$.
Now $P = \rho_1^*((s,t)_{\Fqn})$ and $Q = \rho_2^*((u,v)_{\Fqn})$ for some $s,t,u,v \in \Fqn \backslash \Fq$. Note that $s/t \notin \F_{q^2}$, for otherwise $P$ would be contained in the $\F_{q^2}$-extension of $\C_1$.

Hence a normal rational scroll exists if and only if there exists a $\psi \in \PGL(2,q^n)$ fixing $l$ (whence $\psi \in \PGL(2,q)$) and mapping  $(s,t)_{\Fqn}$ to $(u,v)_{\Fqn}$. The stabiliser of the point $(s,t)_{\Fqn}$ under the action of $\PGL(2,q)$ is trivial unless $s/t \in \F_{q^2}$, and hence the result follows.

Now $P^{\sigma} = \rho_1^*((s,t)_{\Fqn})^{\sigma} = \rho_1^*((s^{\sigma},t^{\sigma})_{\Fqn})$, and similarly $Q^{\sigma} =  \rho_2^*((u^{\sigma},v^{\sigma})_{\Fqn})$. As $\psi \in \PGL(2,q)$, we have that $\psi((s^{\sigma},t^{\sigma})_{\Fqn}) = (u^{\sigma},v^{\sigma})_{\Fqn}$, and so this scroll contains $\langle P^{\sigma^i}, Q^{\sigma^i} \rangle$ for each $i$.
%\[
%\left\{ (1,\frac{as+bt}{cs+dt})_{\Fqn} | a,b,c,d \in \Fq, ad-bc \ne 0\right\}.
%\]
\end{proof}

%\begin{definition}
%A \emph{special normal rational scroll of bidegree $\{k,k-1\}$} in $\Sigma$ is a normal rational scroll defined by curves $\C$ and $\N$ such that
%    \begin{itemize}
%    \item[{\rm(S1)}] $\C$ is a normal rational curve of degree $k$ contained in an affine $k$-space $\pi$, for which $\overline{\pi}\cap H_\infty$ is an element $E_1$ of $\D_k$, such that its $\Fqn$-extension $\C^*$ contains all conjugate points $\{P,P^{\sigma},\ldots,P^{\sigma^{k-1}}\}$ generating the spread element $E_1$,
%    \item[{\rm (S2)}]$\N$ is a normal rational curve of degree $k-1$ contained in an element $E_2$ of $\D_k$, where $E_1$ and $E_2$ are not contained in the same element of $\D$, such that its $\Fqn$-extension $\N^*$ contains all conjugate points $\{Q,Q^{\sigma},\ldots,Q^{\sigma^{k-1}}\}$ generating the spread element $E_2$,
%    \item[{\rm(S3)}] the $\Fqn$-extension of the normal rational scroll contains the lines $\langle P^{\sigma^{i}}, Q^{\sigma^{i}}\rangle$, each line contained in an indicator space $\Pi^{\sigma^i}$ of $\D_k$.
%    \end{itemize}
%\end{definition}

It has been shown (\cite{BarJack2} for $n=3$ and \cite{LaZa201?} for general $n$) that the image of a tangent subplane is a normal rational scroll, and its bidegree calculated. We are now ready to fully characterise tangent subplanes in $\PG(2,q^n)$, extending the result of \cite{BarJack2} for $n=3$ to general $n$.

\begin{theorem}\label{tangentsubplane}
A set $S$ of affine points of $\PG(2n,q)$, $q\geq n$, corresponds to the affine points of a tangent subplane $\mu$ if and only if
$S$ consists of the affine points of a normal rational scroll defined by curves $\C,\N$ satisfying:
    \begin{itemize}
    \item[{\rm(S1)}] $\C$ is a normal rational curve of degree $k$ contained in an affine $k$-space $\pi$, for which $\overline{\pi}\cap H_\infty$ is an element $E_1$ of $\D_k$, such that its $\Fqn$-extension $\C^*$ contains all conjugate points $\{P,P^{\sigma},\ldots,P^{\sigma^{k-1}}\}$ generating the spread element $E_1$,
    \item[{\rm (S2)}]$\N$ is a normal rational curve of degree $k-1$ contained in an element $E_2$ of $\D_k$, where $E_1$ and $E_2$ are not contained in the same element of $\D$, such that its $\Fqn$-extension $\N^*$ contains all conjugate points $\{Q,Q^{\sigma},\ldots,Q^{\sigma^{k-1}}\}$ generating the spread element $E_2$,
    \item[{\rm(S3)}] the $\Fqn$-extension of the normal rational scroll contains the lines $\langle P^{\sigma^{i}}, Q^{\sigma^{i}}\rangle$, each line contained in an indicator space $\Pi^{\sigma^i}$ of $\D_k$.
    \end{itemize}
for some $k|n$.

Moreover, in that case the smallest subplane containing $\mu$ and secant to $l_\infty$ is an $\Fqk$-subplane.

%The affine points of every subplane $\mu$ tangent to $l_\infty$, such that the smallest subplane containing $\mu$ and secant to $l_\infty$ is an $\Fqk$-subplane, corresponds in the ABB-representation to the affine points of a special normal rational scroll of degree $k$.
%
%Conversely, the affine points of every special normal rational scroll of degree $k$ is the image under the ABB-map of the affine points of a subplane $\mu$ tangent to $l_\infty$, such that the smallest subplane containing $\mu$ and secant to $l_\infty$ is an $\Fqk$-subplane.
\end{theorem}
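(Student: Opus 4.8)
\textbf{Proof plan for Theorem \ref{tangentsubplane}.}

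The plan is to follow the same strategy that worked for the subline theorems: reduce to a canonical representative using Lemma \ref{equivalentsubplanes}, compute the ABB-image in coordinates, transfer the conclusion back via the group action in Lemma \ref{IGA}, and finally close the ``if'' direction with a counting argument. First I would invoke Lemma \ref{equivalentsubplanes} to assume $\mu = \chi_{{}_0}(\pi_{\omega,0})$ for some $\omega \notin \Fq$, with $\chi_{{}_0}$ in the stabiliser $G$ of $l_\infty$; by Lemma \ref{smallest2}, if $\Fqk = \Fq(\omega)$ then the smallest secant subplane containing $\mu$ is an $\Fqk$-subplane, which will be the $k$ appearing in the statement. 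The affine points of $\pi_{\omega,0}$ are $\{(s,1,\omega+t)_{\Fqn}\} \cup \{(1,0,t)_{\Fqn}\}$ for $s,t \in \Fq$, so applying the ABB-map $\phi$ and then the embedding $\iota$ gives an explicit set of points in $\PG(2n,q^n)$ parametrised by $(s,t)$. As in the external subline proof, I would clear denominators using the norm $N_k(\omega+t) = \prod_{i=0}^{k-1}(\omega^{q^i}+t)$: the line $s=0$ of the subplane yields a normal rational curve $\N$ of degree $k-1$ (the ``$s$-independent'' piece, lying in a spread element $E_2$ of $\D$, in fact in an element of $\D_k$ after the norm trick), while the remaining one-parameter families, for each fixed direction, fill in a normal rational curve $\C$ of degree $k$ whose span meets $H_\infty$ in an element $E_1$ of $\D_k$. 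The fact that $S$ is ruled — a normal rational scroll of bidegree $\{k-1,k\}$ — should fall out of the bilinear-in-$(s,t)$, polynomial-in-$t$ shape of the parametrisation, together with the cited result (\cite{BarJack2}, \cite{LaZa201?}) that the image is a scroll; I would use that result to get the scroll structure and then pin down the bidegree and the extension behaviour by the explicit coordinates.

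The core computation is to identify $\C^*$ and $\N^*$ and their intersections with the indicator set. For the canonical subplane, the extension of $\N$ is obtained by letting the parameter $t$ range over $\Fqn$; its intersection with $H^*_\infty$ occurs exactly at $t = -\omega^{q^j}$, $j=0,\ldots,k-1$, giving the $k$ conjugate points $\{Q, Q^\sigma,\ldots,Q^{\sigma^{k-1}}\}$ that generate $E_2$ — this is verbatim the mechanism from the proof of Theorem \ref{externalsubline}(ii), since $\N$ is built from the same products $\prod(\omega^{q^i}+t)$. For $\C$, the situation is the one already described in (S1): since the points of $\C$ span an affine $k$-space whose projective completion meets $H_\infty$ in an element of $\D_k$, its $\Fqn$-extension $\C^*$ picks up the full set of conjugates $\{P,P^\sigma,\ldots,P^{\sigma^{k-1}}\}$ generating $E_1$; I would verify that $E_1 \ne E_2$ in $\D$ (they lie in different spread elements because the $a$- and $b$-coordinate blocks behave differently — one comes from the direction $s$, the other from $s=0$). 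Finally, property (S3): the extension of the scroll, being determined by $\C^*$, $\N^*$ and an element $\psi^* \in \PGL(2,q^n)$ fixing the $\Fq$-subline $l \simeq \PG(1,q)$, sends $P^{\sigma^i}$ to $Q^{\sigma^i}$ (Lemma \ref{uniquescroll}), and since $P^{\sigma^i} \in \nu^{\sigma^i}$, $Q^{\sigma^i} \in \nu^{\sigma^i}$ (projections into the indicator line of $\D$), the joining line lies in the indicator space $\Pi^{\sigma^i}$ of $\D_k$. Having established everything for the canonical $\pi_{\omega,0}$, I would transfer to $\mu$ by Lemma \ref{IGA}: $\chi$ is a collineation stabilising $\D$, $\D_k$ and the $k$-spaces meeting $H_\infty$ in $\D_k$, and $\chi^*$ fixes the indicator sets and commutes with $\sigma$, so (S1)--(S3) are preserved.

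For the converse, I would count. A tangent $\Fqk$-subplane is determined by four points in general position — one on $l_\infty$ and a triangle of affine points — or, more conveniently, by the data that determine its representation: one may fix the point of $l_\infty$ and count ordered triples of affine points of $\PG(2,q^n)$ in a fixed tangent subplane through that point, against the corresponding configurations of scrolls satisfying (S1)--(S3) in $\PG(2n,q)$. The key uniqueness ingredient is Lemma \ref{uniquescroll}: given $\C$ and $\N$ as in (S1), (S2) together with a chosen pair $P \in \C^*\setminus\C$, $Q \in \N^*\setminus\N$ with $P$ not in the $\Fq$-extension... not in the $\Fq^2$-extension of $\C$ (which holds here because $k \ge 2$ forces the conjugates to be genuinely off any small subgeometry), there is \emph{at most one} scroll with the required rationality, and it automatically carries the conjugate lines $\langle P^{\sigma^i}, Q^{\sigma^i}\rangle$ — so condition (S3) is not an extra choice but a consistency check. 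I expect the main obstacle to be exactly the bookkeeping in this counting step: matching the number of tangent $\Fqk$-subplanes (summed over $k \mid n$, weighted by how many $\omega$ give $\Fq(\omega) = \Fqk$) with the number of admissible $(\C, \N, \psi)$ triples, and making sure Lemma \ref{3pointsNRC} (or an analogue) rules out a single point-configuration being realised by scrolls of two different bidegrees. The final ``moreover'' — that the smallest secant subplane containing $\mu$ is an $\Fqk$-subplane — is then immediate from Lemma \ref{smallest2} applied to the canonical representative, since $k$ was defined as the degree of $\C$, which equals $[\Fq(\omega):\Fq]$.
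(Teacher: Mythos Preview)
Your overall strategy matches the paper's --- reduce to $\pi_{\omega,0}$, compute in coordinates, transfer via Lemma \ref{IGA}, then count --- but there are two genuine errors in the plan that would derail the execution.

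First, you have the roles of $\C$ and $\N$ reversed. The line $s=0$ of $\pi_{\omega,0}$ is the external $\Fq$-subline $l_{\omega,1}=\{(0,1,\omega+t)_{\Fqn}\}\cup\{(0,0,1)_{\Fqn}\}$, and by Theorem \ref{externalsubline} its ABB-image is a normal rational curve of degree $k$: this is $\C$, the \emph{affine} curve in (S1), not $\N$. The curve $\N$ of degree $k-1$ lives \emph{at infinity}, inside the spread element $\phi((1,0,0)_{\Fqn})$ corresponding to the tangent point of $\mu$ on $l_\infty$. Concretely, writing the ruling lines of $\overline{W}$ as $\langle(1/(\omega+t),0,0)_{\Fq},(0,1/(\omega+t),1)_{\Fq}\rangle$, the directrix $\N_\omega=\{(1/(\omega+t),0,0)_{\Fq}:t\in\Fq\}\cup\{(1,0,0)_{\Fq}\}$ sits in an element of $\D_k$ and has degree $k-1$ (the norm trick drops one factor since the last coordinate is identically zero). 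Your description ``the $s$-independent piece, lying in a spread element $E_2$ of $\D$'' cannot be right: the $s=0$ line consists of affine points, so its image cannot lie in $H_\infty$.

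Second, your handling of the $k=2$ case is wrong. Lemma \ref{uniquescroll} requires $P\notin\C_1^{(\F_{q^2})}$, i.e.\ the parameter of $P$ must lie outside $\F_{q^2}$. But $P=\rho_1^*((1,-\omega)_{\Fqn})$ with $\omega\in\Fqk$; when $k=2$ we have $\omega\in\F_{q^2}$ and $P$ \emph{is} in the $\F_{q^2}$-extension, so the uniqueness in Lemma \ref{uniquescroll} fails and the counting argument breaks down. Your claim that ``$k\ge 2$ forces the conjugates to be genuinely off any small subgeometry'' is false for exactly this reason. The paper deals with $k=2$ separately by quoting the classical characterisation of tangent Baer subplanes in $\PG(4,q)$, and only runs the scroll-counting for $k>2$. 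For the count itself, the paper's bookkeeping is more targeted than your sketch: one fixes $\C$ (equivalently an external subline) and a spread element $E\in\D$ (equivalently the tangent point on $l_\infty$), and shows both sides give exactly $\frac{q^n-1}{q-1}$ objects, using the orbit size $|\PGL(2,q)|$ of $(1,-\omega)_{\Fqn}$ to count admissible $\N$'s.
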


\begin{proof}
First, suppose the smallest secant subplane containing $\mu$ is an $\F_{q^2}$-subplane. From Theorem \ref{secantsubplane} the affine points of $\mu$ are contained in a 4-dimensional affine space intersecting $H_\infty$ in a $3$-space partioned by lines of $\D_1$. In this case we can use the characterisation of the ABB-representation in $\PG(4,q)$ of a tangent Baer subplane of $\PG(2,q^2)$ considered in \cite[Theorem 3.1.9]{Ebert}. This corresponds to a normal rational scroll satisfying $(S1),(S2),(S3)$ with $k=2$, proving our claim. Note that the normal rational curve $\N$ of degree 1 is just a projective line of $\D_1$.

%This corresponds to a special normal rational scroll of bidegree $\{2,1\}$, proving our claim. Note that the normal rational curve $\N$ of degree 1 is just a projective line of $\D_1$.

We can now consider $k>2$.
From Lemma \ref{equivalentsubplanes} and \ref{smallest2} the tangent $\F_{q}$-subplane $\mu$ is equivalent to $\pi_{\omega,0}$, where $\Fq(\omega)=\Fqk$, under an element of the stabiliser $G$ of $l_\infty$ in $\PGL(3,q^n)$, say $\chi_{{}_0}(\pi_{\omega,0})=\mu$ for $\chi_{{}_0}\in G$. Note that $\pi_{\omega,0}$ consists of the following points:
\[
\pi_{\omega,0} %&= \{ (s,u,u\omega+t)_{\Fqn}\mid s,t,u \in \Fq \}\\
%&=\{ (s,1,\omega+t)_{\Fqn}\mid s,t \in \Fq \} \cup \{(t,0,1)_{\Fqn}\mid t\in \Fq\}\cup\{(1,0,0)_{\Fqn}\}\\
=\left\{ \left(\frac{s}{\omega+t},\frac{1}{\omega+t},1\right)_{\Fqn}\middle| \ s,t \in \Fq \right\} \cup \{(t,0,1)_{\Fqn}\mid t\in \Fq\}\cup\{(1,0,0)_{\Fqn}\}.
\]
In the ABB-representation the point $(1,0,0)_{\Fqn} \in \pi_{\omega,0}$ corresponds to the spread element $\{ (x,0,0)_{\Fq} \mid x \in \F_{q^n}^*\} \in \D$.
By definition of the ABB-map $\phi$, the affine points of $\pi_{\omega,0}$ in the ABB-representation form a set $W$ defined as follows:
\begin{align*}
W &=\left\{\left(\frac{s}{\omega+t},\frac{1}{\omega+t},1\right)_{\Fq} \middle| \ s,t \in \Fq\right\}\cup\left\{\left(t,0,1\right)_{\Fq} \mid t\in\Fq\right\}.
\end{align*}
This is a set of $q+1$ affine lines; one can see this more clearly when we consider the set $\overline{W}$ consisting of the projective completions of all lines of $W$.
\begin{align*}
\overline{W}  &= \left\{\left\langle \left(\frac{1}{\omega+t},0,0\right)_{\Fq}, \left(0,\frac{1}{\omega+t},1\right)_{\Fq}\right\rangle\middle| \ t\in\Fq\right\} \cup \left\langle \left(1,0,0\right)_{\Fq}, \left(0,0,1\right)_{\Fq}\right\rangle
\end{align*}
From the proof of Theorem \ref{externalsubline} we know that the set
\[
\C_{\omega} = \left\{ \left(0,\frac{1}{\omega+ t},1\right)_{\Fq} \middle| \ t \in \Fq \right\}\cup \left\{(0,0,1)_{\Fq}\right\}.
\]
is a normal rational curve satisfying the conditions in (S1).
%is a normal rational curve of degree $k$ contained in a $k$-space intersecting $H_{\infty}$ in an element of $\D_k$. Moreover, its $\Fqn$-extension $\C_\omega^*$ intersects the indicator set
%$\{\Pi, \Pi^{\sigma}, \ldots, \Pi^{\sigma^{k-1}}\}$ of $\D_k$ in conjugate points.

%Recall that the norm map $N_k(t+\omega) = \prod_{i=0}^{k-1} (\omega^{q^i}+t)\in \Fq$ for all $t \in \Fq$ and is never zero.
Consider the following set of points of $H_\infty$, all contained in $\overline{W}$:
\begin{align*}
\N_{\omega} &= \left\{ \left(\frac{1}{\omega+ t},0,0\right)_{\Fq} \middle| \ t \in \Fq \right\}\cup \left\{(1,0,0)_{\Fq}\right\}\\
&= \left\{ \left(\prod_{i=1}^{k-1} (\omega^{q^i}+t),0,0\right)_{\Fq} \middle| \ t \in \Fq \right\}\cup \left\{(1,0,0)_{\Fq}\right\}.
\end{align*}
%CAN WE JUST SAY ''A SIMILAR CALCULATION SHOWS THAT...''?
A similar calculation shows that $\N_{\omega}$ is a normal rational curve of degree $k-1$, contained in a projective $(k-1)$-space $E_2$ of $\D_k$, as claimed. In fact, this has been shown in e.g. \cite{InversionMap1,InversionMap2}.
%Clearly, these points all belong to the $(k-1)$-dimensional spread element $\{(x,0,0)_{\Fq}\mid x\in \Fqk\}$ of $\D_k$.
%Expanding the products, we find $k$  non-zero vectors $w_i \in \Fqn \times \Fqn \times \Fq$ (which depend only on $\omega$) such that
%\[
%\N_{\omega} = \{ \langle w_0+t w_1 + \ldots + t^{k-1} w_{k-1}  \rangle_{\Fq} \mid t \in \Fq \}\cup \{\langle w_{k-1} \rangle_{\Fq}\}.
%\]
%The vectors $w_i$ span $\Fqk \times 0 \times 0$ and so we have that $\N_{\omega}$ is a normal rational curve of degree $k-1$, contained in a projective $(k-1)$-space of $\D_k$, as claimed.
Note that the spread element of $\D$ in which $E_2$ is contained is not equal to the one associated to $\C_{\omega}$.

Now using the embedding $\iota$ to embed $\N_{\omega}$ in $\PG(2n,q^n)$ gives the points
\[
 \left(1,\ldots,1;0,\ldots,0;0\right)_{\Fqn},
\]
and
\[
 \left(\prod_{i=1}^{k-1} (\omega^{q^i}+t),\prod_{i=1}^{k-1} (\omega^{q^{i+1}}+t),\ldots,\prod_{i=1}^{k-1} (\omega^{q^{i+n-1}}+t);0,0,\ldots,0;0\right)_{\Fqn},
\]
for $t \in \Fq$.  Since $\F_q(\omega)=\F_{q^k}$, the $(n+j)$-th entry is equal to the $(n+j+k)$-th entry for all $0\leq j \leq n-1$. The extension $\N^*_{\omega}$ of $\N_\omega$ is the normal rational curve obtained by allowing $t$ to range over $\Fqn$. As before, one can see that $\N^*_{\omega}$ contains the conjugate points of the indicator set of $\D_k$ generating the $(k-1)$-space in which $\N_{\omega}$ lies, and hence $\N_{\omega}$ satisfies the condition of (S2).

Clearly then the points of $W$ are precisely the affine points of the normal rational scroll $\B_{\omega}$ defined by the curves $\C_{\omega}$, $\N_{\omega}$ and the identity element of $\PGL(2,q)$, and $\B_{\omega}$ satisfies (S3).

To prove that the converse also holds, it is again sufficient to use a counting argument.

We know from Theorem \ref{externalsubline} that sublines $l_0$ external to $l_{\infty}$ are in one-to-one correspondence with normal rational curves satisfying (S1). The number of tangent $\Fq$-subplanes containing a fixed external $\Fq$-subline and a fixed point $R$ of $l_{\infty}$ (not on the extension of this fixed external subline) is $\frac{q^n-1}{q-1}$. % as choosing a point $S$ of $l_0$, there are $\frac{q^n-1}{q-1}$ distinct sublines of $RS$ containing $R$ and $S$, and a subplane is uniquely determined by two sublines.

Therefore it suffices to show that for a fixed $\C$ satisfying (S1) and a fixed element $E$ of $\D$ (for which $\C$ does not lie in a $n$-space containing $E$), there are precisely $\frac{q^n-1}{q-1}$ curves $\N$ contained in $E$ satisfying (S2) for which there exist a normal rational scroll $\B$ defined by $\C$ and $\N$ and satisfying (S3).

Using the induced action of $G$ (the stabiliser of the line $l_\infty$ in $\PGL(3,q^n)$) on $\PG(2n,q^n)$ we can choose w.l.o.g. $\C = \C_{\omega}$, where $\Fq(\omega)=\Fqk$, with $k>2$ by our assumption, and $E = \iota\phi(1,0,0)_{\Fqn}$. Consider a point $A = \iota(a,0,0)_{\Fq}$ of $E$ and the element $E_2$ of $\D_k$ containing $A$. We will prove that there are $q(q+1)$ choices for a point $B \in E_2$ such that $A,B$ and the points of $\C_{\omega}$ lie on a normal rational scroll satisfying (S1),(S2),(S3). We let $B = \iota(b,0,0)_{\Fq}$ for some $b \in \Fqn$. The conjugate points generating $E_2$ are $\{R,R^{\sigma},\ldots,R^{\sigma^{k-1}}\}$, where $R=(v)_{\Fqn}$ with
\begin{align*}
v &= (1,0,\ldots,0,a^{q^k-1},0,\ldots,0,a^{q^{n-k}-1},0,\ldots,0;0,\ldots,0;0)\\
 &= (1,0,\ldots,0,b^{q^k-1},0,\ldots,0,b^{q^{n-k}-1},0,\ldots,0;0,\ldots,0;0).
\end{align*}

Now there exists a unique normal rational curve $\N_{AB}$ satisfying (S2), i.e. such that its extension contains $\{A,B,R,R^{\sigma},\ldots,R^{\sigma^{k-1}}\}$. We can see that it must be the curve defined by the following map:
\[
\eta(s,t) = \left(\sum_{i=0}^{k-1} \prod_{j=0,j \ne i}^{k-1} (a^{q^j} s - b^{q^j}t) v^{\sigma^i}\right)_{\Fqn}.
\]
Then $R^{\sigma^i} = \eta(b^{q^i},a^{q^i})$, $A = \eta(0,1)$, $B = \eta(1,0)$, and $\N_{AB} = \eta(\Fq \times \Fq)$.

From the proof of Theorem \ref{externalsubline}, the curve $\C^*_{\omega}$ can be parametrised by a map $\rho: \Fqn \times
\Fqn \rightarrow \PG(2n,q)$ such that $\C_{\omega} = \rho(\Fq \times \Fq)$, and the intersection of $\C^*_{\omega}$ with the indicator sets are the points $Q^{\sigma^i} = \rho(1,-\omega^{q^i})$, $i \in \{0,\ldots,k-1\}$. Using Lemma \ref{uniquescroll}, there exists a normal rational scroll defined by $\C_{\omega}$ and $\N_{AB}$ %$\C^*_{\omega}$ and $\N^*_{AB}$
such that its $\Fqn$-extension contains the lines $\langle Q^{\sigma^i}, R^{\sigma^i}\rangle$ (hence satisfies (S3)) if and only if there exists an element $\psi$ of $\PGL(2,q^n)$ which fixes the canonical $\Fq$-subline (defined by $\Fq \times \Fq$), whence $\psi \in \PGL(2,q)$), and which maps $(1,-\omega)_{\Fqn}$ to $(1,\frac{b}{a})_{\Fqn}$. By the proof of Lemma \ref{uniquescroll}, since $\omega \notin \Fqt$, there are $|\PGL(2,q)|=q(q^2-1)$ points in the orbit of $(1,-\omega)_{\Fqn}$ under $\PGL(2,q)$, and hence $q(q^2-1)$ choices for $b$. As $\iota(b,0,0)_{\Fq} = \iota(\lambda b,0,0)_{\Fq}$ for all $\lambda \in \Fq^{\times}$, we get that there are $q(q+1)$ allowable points $B \in E_2$.
%Thus for a given $A$, there must exist $\npmatrix{\alpha&\beta\\ \gamma &\delta} \in \GL(2,q)$ such that %(multiplication from the right)
%\[
%b = a \left(\frac{\beta - \delta \omega}{\alpha - \gamma \omega}\right).
%\]
%Note that each such $B$ does indeed lie in $E_2$.
%Hence the number of allowable elements $b$ is $q(q+1)$. 
Since there are $\frac{q^n-1}{q-1}$ choices for $A$, and each $\N^*_{AB}$ contains $q(q+1)$ ordered pairs of distinct points, we have that there are precisely $\frac{q^n-1}{q-1}$ curves satisfying (S2) which define a scroll satisfying (S3), proving the claim.
\end{proof}

\begin{remark}
In \cite{BarJack2}, in the case $n=3$ a curve satisfying (S1) was referred to as a \emph{special normal rational curve}, while a curve satisfying (S2) was referred to as a \emph{special conic}.
\end{remark}

\begin{remark} Proceeding in the same way as for tangent subplanes, we can describe the ABB-representation of external $\F_q$-subplanes. The ABB-representation of $\pi_{\omega,\lambda}$, where $\{1,\omega, \lambda\}$ are linearly independent over $\F_q$, is the set $S$ of points of the form \[
\left(\prod_{i=1}^{r-1} s(s\lambda^{q^i} + u\omega^{q^i}+t),\prod_{i=1}^{r-1} u(s\lambda^{q^i} + u\omega^{q^i}+t),\prod_{i=0}^{r-1} (s\lambda^{q^i} + u\omega^{q^i}+t)\right)_{\Fq},
\]
for $s,t,u \in \Fq$. Expanding the brackets, we find ${n+2 \choose 2}$ vectors $v_{ijm} \in \Fqn \times \Fqn \times \Fq$ such that
\[
S = \left\{ \left(\sum_{i+j+m=n}  s^i t^j u^m v_{ijm} \right)_{\Fq} \middle| \ s,t,u \in \Fqn\right\}.
\]
Hence $S$ is a projection of the Veronese variety $V_x(x) \subset \PG({n+2 \choose 2}-1,q)$ into $\PG(2n,q)$.

By Theorem \ref{externalsubline}, we know that this projection of the Veronese variety is covered by normal rational curves: through every two points of $S$, there lies a unique normal rational curve, entirely contained in $S$.
However, the characterisation of these projected Veronese varieties that correspond to external subplanes remains an open problem.
\end{remark}

{\bf Acknowledgment:} The authors would like to thank Michel Lavrauw for his helpful remarks during the authors' research visit to Universit\`a degli Studi di Padova in Vicenza.

%\subsection{External subplanes}\label{sectionexternalsubplane}
%
%The plane $\pi_{\omega,\lambda}$ is an external subplane if and only if $\{1,\lambda,\omega\}$ are linearly independent over $\Fq$.
%
%Let $\F_{q^r} = \Fq(\lambda,\omega)$. Similarly to Section ??, the image of $\pi_{\omega,\lambda}$ in $\PG(2n,q)$ is the set $S$ of points of the form
%\[
%\left\langle\left(\prod_{i=1}^{r-1} s(s\lambda^{q^i} + u\omega^{q^i}+t),\prod_{i=1}^{r-1} u(s\lambda^{q^i} + u\omega^{q^i}+t),\prod_{i=0}^{r-1} (s\lambda^{q^i} + u\omega^{q^i}+t)\right)\right\rangle_{\Fq},
%\]
%for $s,t,u \in \Fq$. Expanding the brackets, we find ${n+2 \choose 2}$ vectors $v_{ijm} \in \Fqn \times \Fqn \times \Fq$ such that
%\[
%S = \left\{ \sum_{i+j+m=n} \langle s^i t^j u^m v_{ijm} \rangle :s,t,u \in \Fqn\right\}.
%\]
%Hence $S$ is a projection of the Veronese variety $V_x(x) \subset \PG({n+2 \choose 2}-1,q)$ into $\PG(2n,q)$.

\noindent
Affiliation of the authors:\\

{S. Rottey}\\
srottey\makeatletter @vub.ac.be\\
Department of Mathematics, Vrije Universiteit Brussel\\
Pleinlaan 2, 1050 Brussel, Belgium\\

{J. Sheekey}\\
jsheekey\makeatletter @cage.ugent.be\\
Department of Mathematics, Universiteit Gent\\
Krijgslaan 281, S22, 9000 Gent, Belgium\\

{G. Van de Voorde}\\
gvdvoorde\makeatletter @cage.ugent.be\\
Department of Mathematics, Universiteit Gent\\
Krijgslaan 281, S22, 9000 Gent, Belgium\\ 

\end{document}